\pgfplotsset{compat=newest}
\newtheorem{lemma}{\bf Lemma}
\newtheorem{proposition}{\bf Proposition}
\newtheorem{remark}{Remark}
\newtheorem{theorem}{\bf Theorem}
\def\TRic{\mathring{Ric}}
\def\RR{{\mathrm R}}
\def \2R{{\hat{\RR}}}
\def\det{\mathrm{det}}
\theoremstyle{definition}
\numberwithin{equation}{section}
\title[Four-dimensional Gradient Ricci Solitons]{Gradient shrinking Ricci Solitons and \\Modified Sectional Curvature}
\author{Xiaodong Cao}
\author{Ernani Ribeiro Jr}
\author{Hosea Wondo}
\address[X. Cao]{Department of Mathematics, Cornell University, Ithaca, NY 14853} \email{xiaodongcao@cornell.edu}
\address[E. Ribeiro Jr]{Departamento de Matem\'atica, Universidade Federal do Cear\'a - UFC, Campus do Pici, 60455-760, Fortaleza - CE, Brazil}
\email{ernani@mat.ufc.br}
\thanks{E. Ribeiro Jr was partially supported by CNPq/Brazil [305128/2025-6 and  351492/2025-9] and FUNCAP/Brazil [ITR-0214-00116.01.00/23].}
\address[H. Wondo]{Department of Mathematics, Cornell University, Ithaca, NY 14853} \email{h.wondo@outlook.com}
\keywords{Gradient Ricci soliton; four-manifolds; Ricci flow; modified sectional curvature}  \subjclass[2020]{Primary 53C25, 53C20, 53E20}
\date{October 1, 2025}
\begin{document}

\begin{abstract}
We investigate four-dimensional gradient shrinking Ricci solitons with positive modified sectional curvature. Our first main result shows that if the norm of the self-dual Weyl tensor and the scalar curvature satisfy a certain sharp pinching condition --- closely related, in a precise sense, to the K\"ahler case --- then the soliton is necessarily locally K\"ahler. We further obtain a characterization theorem and a weighted integral gap result for compact gradient shrinking Ricci solitons with modified sectional curvature bounded from below. In addition, we establish a Hitchin--Thorpe type inequality for compact four-dimensional Ricci solitons, providing new topological constraints on such ma\-ni\-folds.
\end{abstract}

\maketitle

\section{Introduction}
\label{int}

A smooth, complete Riemannian manifold $(M^n,g)$ is called a {\it gradient shrinking Ricci soliton} if there exists a smooth potential function $f$ on $M^n$ satisfying
\begin{equation}
\label{grs}
Ric+ Hess\,f= g,
\end{equation} 
where $Ric$ denotes the Ricci tensor of the metric $g$ and $Hess\,f$ is the Hessian of $f$. Gradient Ricci solitons play a fundamental role in the study of Hamilton’s Ricci flow \cite{Hamilton2}, as they arise naturally as singularity models for the Ricci flow \cites{Hamilton2,bamler2020structure,Topping,Sesum}. Their classification is a central problem in understanding the formation and structure of singularities in the Ricci flow \cites{Perelman2,bamler2020structure}. For further background, we refer the reader to the survey \cite{caoALM11} and the references therein.\\

 Hamilton \cite{Hamilton2} proved that any $2$-dimensional gradient shrinking Ricci soliton is either isometric to the Euclidean plane $\mathbb{R}^2$ or a quotient of the round sphere $\mathbb{S}^2$. In dimension $n=3$, results of Ivey \cite{Ivey}, Perelman \cite{Perelman2}, Naber \cite{Naber}, Ni--Wallach \cite{Ni}, and Cao--Chen--Zhu \cite{CaoA} show that every complete $3$-dimensional gradient shrinking Ricci soliton is isometric to a finite quotient of one of the following model spaces: the round sphere $\mathbb{S}^3$, the Gaussian shrinking soliton $\mathbb{R}^3$, or the round cylinder $\mathbb{S}^{2}\times\mathbb{R}$. In dimension $n=4$, a complete classification of $4$-dimensional gradient shrinking Ricci solitons remains open. For the special case of locally conformally flat solitons (i.e., $W=0$), results from \cites{ELM, Ni, zhang, PW2, CWZ, MS} establish that every complete four-dimensional gradient shrinking Ricci soliton is isometric to a finite quotient of $\mathbb{S}^4$, $\mathbb{R}^4$, or $\mathbb{S}^{3}\times\mathbb{R}$. Further classification results have been obtained under additional curvature assumptions, including:

 \begin{itemize}
     \item Half-conformally flat ($W^{+}=0$, for a suitable choice of orientation) by Chen and Wang \cite{CW};
     \item Bach-flat metrics by H.-D. Cao and Chen \cite{CaoChen};
     \item Harmonic Weyl tensor ($ \delta\, W=0$) by Fern\'{a}ndez-L\'opez and Garc\'ia-R\'io \cite{FLGR} and by Munteanu--Sesum \cite{MS}; 
     \item Harmonic self-dual Weyl tensor  ($\delta\, W^{+}=0$) by Wu et al. \cite{Wu} (see also \cite{CMM}).
 \end{itemize}
 
A recent result by Cheng and Zhou \cite{CZ2021}, together with work of Fern\'andez-L\'opez and Garc\'ia-R\'io \cite{Fl-Gr}, provides a complete classification of four-dimensional gradient shrinking Ricci solitons with constant scalar curvature. In the K\"ahler setting, a full classification has recently been obtained by Li and Wang \cite{LiWang}; see also \cites{BCCD22KahlerRicci, conlon2024classification,Tran2025, TZ1, TZ2}. For further contributions and related results, we refer to \cites{CaoChen, ELM, Ni, Chen, Chow, zhang, Naber, PW2, CWZ, FLGR, MW, KW, MS, MW2} and the references therein.\\

Although many classification results for four-dimensional gradient Ricci solitons rely on the vanishing of the Weyl tensor $W,$ it is natural to ask whether analogous conclusions can be obtained under weaker conditions, such as bounds on its norm or suitable curvature constraints. In this direction, Catino \cite{Catino} proved that any complete four-dimensional gradient shrinking Ricci soliton with nonnegative Ricci curvature satisfying
\begin{equation}
\label{cond01}
|W| R\leq \sqrt{3}\left(|\mathring{Ric}|-\frac{1}{2\sqrt{3}}S \right)^2
\end{equation} must be conformally flat. Here, $\mathring{Ric}$ denotes the traceless Ricci tensor and $S$ the scalar curvature. The assumption of nonnegative Ricci curvature was later removed by \cite{Wu}. In \cite{Zhang2}, Zhang showed that any four-dimensional gradient shrinking Ricci soliton with nonnegative and bounded Ricci curvature, and satisfying $$|W|\leq \eta \left| \mid \mathring{Ric}\mid -\frac{1}{2\sqrt{3}}S\right|,$$ for some constant $\eta<1+\sqrt{3}$, is either flat or has $2$-positive Ricci curvature. However, such classification results exclude the non-Einstein K\"ahler-Ricci soliton $\mathbb{S}^2\times \mathbb{R}^2$ endowed with the product metric (see \cite[Example 1]{CRZ}). To incorporate this example, H.-D. Cao, Ribeiro and Zhou \cite[Theorem 1]{CRZ} proved that a complete four-dimensional gradient shrinking Ricci soliton satisfying a suitable inequality controlling the norm of the self-dual Weyl tensor  $W^+$ is either Einstein, conformally flat, or isometric to $\mathbb{S}^{2}\times\mathbb{R}^{2}.$ Remarkably, no pointwise bound on the Ricci tensor is required. This result was later refined by
X. Cao, Ribeiro and Tran \cite{CRT} by assuming that
\begin{equation}
\label{cond11}|W^{+}|^2-\sqrt{6}|W^+|^3 \geq \frac{\sqrt{6}}{6}|\mathring{Ric}|^2|W^+|.
\end{equation} Their proof employs the notion of curvature of the second kind and certain algebraic identities developed by X. Cao, Gursky, and Tran \cite{CGT} in the resolution of a conjecture of Nishikawa.

In the first part of this article, we advance the classification program for four-dimensional gradient shrinking Ricci solitons by deriving new rigidity results under a curvature pinching condition involving the self-dual Weyl tensor $W^+$. The motivation for this approach comes from a classical identity of  Derdzi\'{n}ski \cite{derd1}, which asserts that any four-dimensional K\"ahler manifold, equipped with the natural orientation, satisfies the identity
$$|W^+|=\frac{S}{2\sqrt{6}}.$$ This exact relation between $W^+$ and $S$ reflects a deep interplay between complex geometry and conformal curvature decomposition in dimension four. Building on techniques developed in \cite{CRZ} and \cite{CRT}, we formulate a natural K\"ahler-type pinching condition involving $W^+$ and  $S$, which forces a gradient shrinking Ricci soliton to be locally a  K\"ahler-Ricci soliton, that is, K\"ahler after possibly pulling back to a double cover of $M^4$ (see \cite{LeBrun1}). This result uncovers a new bridge between curvature inequalities and complex geometric structure in the Ricci soliton setting.\\

We now state our first main theorem.

\begin{theorem}\label{ThmA}
    Let $(M^4,\,g,\,f)$ be a complete $4$-dimensional gradient shrinking Ricci soliton satisfying \eqref{grs}. If $(M^4,\,g,\,f)$ satisfies 
    \begin{equation}\label{Thm3A1_alt}
        \frac{S}{2\sqrt{6}}  \leq |W^+| \leq \frac{1}{\sqrt{6}}\left( 2-\frac{S}{2}\right),
    \end{equation}
    then $(M^4,\,g)$ is locally a K\"ahler-Ricci soliton.   
\end{theorem}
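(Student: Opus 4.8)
The plan is to reduce the statement to the equality case of Derdzi\'{n}ski's identity and then force that equality by a weighted maximum principle. Recall that an oriented four-manifold is locally K\"ahler (up to a double cover) precisely when $W^+$ has, at every point where it is nonzero, a repeated eigenvalue together with $|W^+|=\tfrac{S}{2\sqrt6}$; equivalently, writing $w_1\le w_2\le w_3$ for the eigenvalues of $W^+$ acting on $\Lambda^+$, this is the extremal configuration $w_1=w_2=-\tfrac{S}{12}$, $w_3=\tfrac{S}{6}$ of the sharp algebraic inequalities $w_3\le\sqrt{2/3}\,|W^+|$ and $\det W^+\le \tfrac{1}{3\sqrt6}|W^+|^3$ (for a trace-free symmetric endomorphism of $\Lambda^+$). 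Since the hypothesis already supplies $|W^+|\ge \tfrac{S}{2\sqrt6}$, the whole problem reduces to proving the reverse inequality $|W^+|\le \tfrac{S}{2\sqrt6}$ and to identifying the eigenvalue structure in the equality case.

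First I would set $u=2\sqrt6\,|W^+|-S$, so that $u\ge 0$ by the lower bound, the upper bound reads $u\le 4-2S$ (in particular $S\le 2$), and the desired conclusion is exactly $u\equiv 0$. To control $u$ I would combine two weighted-Laplacian identities on the soliton: the standard $\Delta_f S = 2S-2|Ric|^2 = 2S-\tfrac{S^2}{2}-2|\mathring{Ric}|^2$, and a Bochner/Simons-type formula for the self-dual Weyl tensor coming from its reaction--diffusion equation under the soliton structure, of the schematic form $\tfrac12\Delta_f|W^+|^2=|\nabla W^+|^2+c_1|W^+|^2-c_2\det W^+$, with $c_1,c_2$ determined by the normalization $Ric+Hess\,f=g$ and by the quadratic reaction terms $(W^+)^2+(W^+)^\#$. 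Dividing by $|W^+|$ and discarding the nonnegative refined-Kato remainder $|\nabla W^+|^2-|\nabla|W^+||^2\ge 0$, I would pass to the pointwise inequality $\Delta_f|W^+|\ge c_1|W^+|-\tfrac{c_2}{3\sqrt6}|W^+|^2$, in which the sharp determinant bound has been invoked.

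The crux is to show that the upper pinching bound is exactly the threshold making the reaction term for $u$ sign-definite. Substituting $|W^+|=(u+S)/(2\sqrt6)$ into the combined inequality for $\Delta_f u$ and simplifying, the constant $2$ appearing in $\tfrac{1}{\sqrt6}(2-\tfrac S2)$ should match the soliton's linear coefficient so that $\Delta_f u \ge \Lambda(S,u)\,u$ for a function $\Lambda$ that is strictly positive wherever $u>0$ under the constraint $u\le 4-2S$, with the surviving term $+2|\mathring{Ric}|^2\ge 0$ only helping. Verifying this algebraic matching of constants is the main obstacle, since it requires the exact coefficients $c_1,c_2$ in the Weyl Bochner formula; everything hinges on their lining up with the extremal K\"ahler configuration.

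Finally I would run the maximum principle. In the compact case, at an interior maximum of $u$ one has $\Delta_f u\le 0$, contradicting $\Delta_f u\ge \Lambda u>0$ unless $\max u\le 0$; hence $u\equiv 0$. In the complete noncompact case I would instead use that $f$ is proper with $e^{-f}$ integrable and apply the weighted (Omori--Yau type) maximum principle for $\Delta_f$, together with a cutoff argument, to reach the same conclusion. Once $u\equiv 0$, i.e. $|W^+|\equiv \tfrac{S}{2\sqrt6}$, the rigidity in the maximum-principle chain forces every intermediate inequality to be an equality; in particular the determinant bound becomes sharp, which yields $w_1=w_2$ and $w_3=\tfrac S6$ on the open set where $W^+\ne 0$. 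Derdzi\'{n}ski's identity together with the K\"ahler characterization recalled above (cf. LeBrun) then shows that $(M^4,g)$ is locally K\"ahler, hence a K\"ahler--Ricci soliton.
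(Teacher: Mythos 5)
Your reduction and most of the algebra are in the right spirit, but the decisive step fails. First, your schematic Weitzenb\"ock formula omits the Ricci--Weyl coupling term: the actual identity \eqref{weitzenbock} contains $-\langle(\mathring{Ric}\odot\mathring{Ric})^{+},W^{+}\rangle$, which must be estimated by the sharp bound of Lemma \ref{lemK}; after multiplying through and adding $-\Delta_f S$ via \eqref{trRic_RiclapS}, this term \emph{exactly cancels} the $+2|\mathring{Ric}|^2$ you expected to ``only help'' --- there is no leftover positive term. Carrying out the matching with $u=2\sqrt{6}|W^+|-S$ gives precisely
\begin{equation*}
\Delta_f u \;\ge\; u\left(2-S-\tfrac{u}{2}\right) \qquad \text{wherever } W^+\neq 0,
\end{equation*}
and the upper bound in \eqref{Thm3A1_alt} reads $u\le 4-2S$, i.e.\ $\Lambda:=2-S-\tfrac{u}{2}\ge 0$ --- nonnegative, \emph{not} strictly positive where $u>0$. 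Nothing in the hypotheses rules out $u=4-2S>0$ at a maximum point, and the pinching is sharp (both inequalities in \eqref{Thm3A1_alt} are equalities on $\mathbb{S}^2\times\mathbb{R}^2$), so your strict maximum-principle contradiction collapses exactly at the threshold the hypothesis permits. Consequently you cannot prove the reverse inequality $|W^+|\le S/(2\sqrt{6})$ this way, and indeed the paper never does: it proves only that $\Phi=|W^+|-S/(2\sqrt{6})$ is \emph{constant} (possibly nonzero a priori), by the strong maximum principle in the compact case and, in the noncompact case, by a cutoff argument requiring the weighted integrability $\int_M\Phi^2 e^{-f}dV_g<\infty$, which uses the upper bound of \eqref{Thm3A1_alt}, finite weighted volume \cite{CZ}, and $\mathring{Ric}\in L^2_f$ \cite{MS}. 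Your appeal to an Omori--Yau principle runs into the same non-strict reaction coefficient and, as written, is a gesture rather than an argument.

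There is a second gap in your endgame: you assert that the pointwise configuration ($w_1=w_2$ together with $|W^+|=S/(2\sqrt{6})$) characterizes locally K\"ahler metrics. The ``only if'' direction is Derdzi\'nski's identity, but the ``if'' direction is false as a purely algebraic statement --- pointwise degeneracy of $W^+$ does not make the distinguished eigenform closed or parallel; a differential condition is indispensable. The correct conclusion mechanism (and the paper's) is: once $\Phi$ is constant, feeding \eqref{5_WSequal} back into the Weitzenb\"ock chain forces equality in \emph{all} intermediate inequalities, in particular in Kato's inequality, so $\nabla W^+=\nu\otimes W^+$ for some one-form $\nu$; it is this differential identity combined with $\lambda_1=\lambda_2$ (Proposition \ref{kahlerform}, from \cite{CRT}) that produces the local K\"ahler form. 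Note that this route never needs the constant to vanish; $|W^+|\equiv S/(2\sqrt{6})$ follows only a posteriori, from Derdzi\'nski's identity applied to the K\"ahler structure just obtained.
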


\begin{remark}
 We note that both inequalities in condition (\ref{Thm3A1_alt}) of Theorem \ref{ThmA} are attained as equalities for the K\"ahler-Ricci soliton $\mathbb{S}^2 \times \mathbb{R}^2$ equipped with the product metric.  
 \end{remark}

A key ingredient in the proof of Theorem~\ref{ThmA} is a Weitzenb\"ock-type formula for the self-dual Weyl tensor $W^{+}$ on four-dimensional gradient shrinking Ricci solitons, established by X. Cao and Tran in \cite{CH}. It asserts that any four-dimensional gradient shrinking Ricci soliton obeying (\ref{grs}) satisfies

\begin{equation}
\label{weitzenbock}
\Delta_{f} |W^{+}|^{2}=2|\nabla W^{+}|^{2}+4|W^{+}|^{2}-36\, \det\, W^{+}-\langle (\mathring{Ric}\odot \mathring{Ric})^{+},W^{+}\rangle,
\end{equation} where $\odot$ stands for the Kulkarni--Nomizu product. This formula (\ref{weitzenbock}) also plays a fundamental role in the proofs of Theorems~\ref{ThmB}, \ref{thmNew}, and~\ref{ThmC} in the present article, and features prominently in \cite{CRZ, CRT}.

Motivated by the work of Gursky and LeBrun \cite{LeBrun} and Berger \cite{Berger1}, X. Cao and Tran \cite{CH} introduced the notion of {\it modified sectional curvature} $\overline{K}$ on a four-dimensional gradient Ricci soliton $(M^4,\,g,\,f)$ via the modified curvature tensor 

\begin{equation}
\label{modR}
    \overline{R} := R + \frac{1}{2}Hess\,f\odot g.
\end{equation} Using this notion, they obtained integral estimates for the norm of the Weyl tensor on four-dimensional compact Ricci solitons. In the present work, we combine the concept of modified sectional curvature with bounds on the scalar curvature to derive new characterization results for compact gradient shrinking Ricci solitons. In this context, we establish the following theorem.

\begin{theorem}\label{ThmB}
Let $(M^4,\,g,\,f)$ be an oriented $4$-dimensional compact gradient shrin\-king Ricci soliton satisfying \eqref{grs}. If $$\overline{K} \geq \varepsilon := 1- \sqrt{\frac{268}{567}} \,(\approx 0.312)\,\,\,\hbox{ and }\,\,\,S \geq \delta := \frac{360}{67}(1-\varepsilon) = \frac{360}{67}\sqrt{\frac{268}{567}}\,\,(\approx 3.694),$$ then $M^4$ is isometric to either the standard sphere $\mathbb{S}^4$, or the complex projective space $\mathbb{C} \mathbb{P}^2.$ 
\end{theorem}

The proof of Theorem \ref{ThmB} is based on integral estimates obtained through the application of the Weitzenb\"ock-type formula (\ref{weitzenbock}), combined with key estimates from \cite{CH} and Catino’s gap theorem \cite{catinoAdv}. Subsequently, via a tour de force of integral estimates, combined with the classical gap theorem of Gursky and LeBrun \cite{LeBrun} and a rigidity theorem due to Yang \cite{yang2000rigidity}, we obtain the following result.

\begin{theorem}
\label{thmNew}
Let $(M^4,\,g,\,f)$ be an oriented $4$-dimensional compact gradient shrin\-king Ricci soliton satisfying \eqref{grs} and \begin{equation}
        \int_{M} |\delta W^{+}|^2\,dV_g \leq \int_{M} \frac{S}{6}|W^{+}|^2\, dV_g.
\end{equation} If $\overline{K} \geq0.3069 $ and $S \geq 3.668$,
then $M^4$ is isometric to either the standard sphere $\mathbb{S}^4$, or the complex projective space $\mathbb{C} \mathbb{P}^2.$ 
\end{theorem}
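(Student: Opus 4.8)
The plan is to run a weighted integral argument built on the Weitzenb\"ock formula \eqref{weitzenbock}, to sharpen the resulting inequality using the hypothesis on $\delta W^+$, and then to reduce the problem to an Einstein rigidity statement to which the Gursky--LeBrun gap theorem and Yang's rigidity theorem apply. Since $\int_M \Delta_f u\, e^{-f}dV_g = 0$ for every smooth $u$, integrating \eqref{weitzenbock} against the weighted measure $e^{-f}dV_g$ yields the identity
\begin{equation*}
2\int_M |\nabla W^+|^2\, e^{-f}dV_g = \int_M\left(36\,\det W^+ + \langle(\mathring{Ric}\odot\mathring{Ric})^+,W^+\rangle - 4|W^+|^2\right)e^{-f}dV_g.
\end{equation*}
The goal is to show that, under the stated bounds, the right-hand side is so tightly controlled by the left that both sides must vanish, forcing $\nabla W^+\equiv 0$ together with a rigid eigenvalue structure for $W^+$.

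First I would record the pointwise algebraic estimates that close the nonlinear and mixed terms. For the trace-free operator $W^+$ one has $\det W^+\le \tfrac{1}{3\sqrt 6}|W^+|^3$, with equality exactly in the K\"ahler eigenvalue pattern $(2,-1,-1)$, together with a coupling bound of the form $|\langle(\mathring{Ric}\odot\mathring{Ric})^+,W^+\rangle|\le c\,|\mathring{Ric}|^2\,|W^+|$. The modified sectional curvature hypothesis $\overline K\ge 0.3069$, read off from the block form of the modified curvature operator $\overline R = R + \tfrac12 Hess\,f\odot g$, translates into a lower bound on the eigenvalues of $W^+$ and hence, together with $S\ge 3.668$, into a pointwise upper bound for $|W^+|$ in terms of $S$. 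This is what permits the cubic term $|W^+|^3$ to be absorbed into a multiple of $|W^+|^2$.

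The new ingredient enters next. Alongside the integrated Weitzenb\"ock identity I would invoke the companion integral identity for the divergence $\delta W^+$ from \cite{CH}, together with the refined Kato inequality $|\nabla W^+|^2\ge (1+\kappa_0)\bigl|\nabla|W^+|\bigr|^2$ for an explicit $\kappa_0>0$, to produce a sharp lower bound for $\int_M|\nabla W^+|^2 e^{-f}dV_g$ in terms of $\int_M|\delta W^+|^2 e^{-f}dV_g$ and curvature integrals. Feeding in the hypothesis $\int_M|\delta W^+|^2 dV_g\le\int_M\tfrac{S}{6}|W^+|^2 dV_g$ fixes this term with a favorable sign; this is precisely the step that buys the improved thresholds $0.3069$ and $3.668$ relative to the constants $\varepsilon$ and $\delta$ of Theorem \ref{ThmB}, whose proof is forced to bound $\int|\nabla W^+|^2$ more crudely and therefore requires a stronger pinching.

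Combining the three inputs should produce an integral inequality of the form $\int_M \Phi\, e^{-f}dV_g\le 0$ with $\Phi\ge 0$ pointwise under the hypotheses, forcing $\Phi\equiv 0$. Tracing back the equality cases then forces $\mathring{Ric}\equiv 0$ and $\nabla W^+\equiv 0$, with $W^+$ either vanishing or attaining the K\"ahler eigenvalue pattern, so that $(M^4,g)$ is Einstein and is either half-conformally flat or K\"ahler--Einstein with the curvature pinching required downstream. At this point the classical gap theorem of Gursky and LeBrun \cite{LeBrun} and the rigidity theorem of Yang \cite{yang2000rigidity} identify $M^4$ as $\mathbb{S}^4$ or $\mathbb{CP}^2$. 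I expect the main obstacle to be the sharp-constant bookkeeping in the third paragraph: one must balance the $\delta W^+$-term against the gradient term and against the cubic estimate delicately enough that the thresholds drop strictly below those of Theorem \ref{ThmB}, and one must verify that the surviving Einstein pinching meets the precise hypotheses of \cite{LeBrun} and \cite{yang2000rigidity} — this is the ``tour de force'' of integral estimates.
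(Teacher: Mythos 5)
Your skeleton --- integrated Weitzenb\"ock formula, absorption of the cubic and Ricci-coupling terms via the modified curvature bounds \eqref{Kbarboundineq}, vanishing of $\nabla W^{+}$, then a rigidity endgame --- matches the paper's, but the pivotal step, namely how the hypothesis $\int_M|\delta W^{+}|^2\,dV_g\le\int_M\frac{S}{6}|W^{+}|^2\,dV_g$ is exploited, is misconceived, and this is exactly where the improved thresholds come from. In the paper, Theorem \ref{thmNew} is the instance $t=0.465$, $\delta=3.668$, $\varepsilon=0.3069$ of Theorem \ref{Thm2c}, and there the hypothesis is never used to control the gradient term: it enters through the weighted Yamabe functional of Cao--Tran, \cite[Propositions 4.4 and 4.5, Remark 4.1]{CH}, which convert it into the existence of a conformal metric $\widetilde g$ with $\int_M|\widetilde W^{+}|^2\,dV_{\widetilde g}\ge\frac{1}{216}\int_M\widetilde S^2\,dV_{\widetilde g}$; since both sides of \eqref{4_gap_assumption_new} are conformally invariant, a contradiction argument then forces the integral gap
\begin{equation*}
\int_M|W^{+}|^2\,dV_g\ \ge\ \frac19\int_M\Bigl(\frac{S^2}{24}-\frac{|\mathring{Ric}|^2}{2}\Bigr)dV_g,
\end{equation*}
i.e.\ a lower bound on $\int|W^{+}|^2$, not on $\int|\nabla W^{+}|^2$. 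This gap is indispensable: at the stated thresholds the coefficient of $|\mathring{Ric}|^2$ in the Weitzenb\"ock bookkeeping is $\frac{\delta}{3}-2(1-\varepsilon)\approx-0.16<0$, so the integrand is \emph{not} pointwise nonnegative and your plan to reach $\int_M\Phi\,e^{-f}dV_g\le0$ with $\Phi\ge0$ pointwise cannot work. Only after splitting the $|W^{+}|^2$-coefficient with the parameter $t$, trading part of it through the gap inequality, and converting $\int|\mathring{Ric}|^2$ into $\int S^2$ and volume terms via \eqref{intricball}, do the non-gradient terms become nonnegative in the integral sense, under the algebraic conditions \eqref{Thm2cA1}--\eqref{Thm2cA12}.

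Your proposed mechanism also has a sign problem: an upper bound on $\int|\delta W^{+}|^2$ cannot yield a useful lower bound on $\int|\nabla W^{+}|^2$, because $\delta W^{+}$ is a contraction of $\nabla W^{+}$ and is pointwise dominated by it, so any such inequality runs the wrong way unless $\int|\delta W^{+}|^2$ appears with a negative coefficient --- and you never exhibit an identity of that form; a refined Kato inequality with a $-C|\delta W^{+}|^2$ correction is exactly the ingredient that, in \cite{LeBrun} and \cite{CH}, feeds into the conformal (Yamabe-functional) machinery rather than directly into the Weitzenb\"ock identity. Two further inaccuracies: the equality analysis does not force $\mathring{Ric}\equiv0$ --- the paper concludes only $\nabla W^{+}\equiv0$, hence $\delta W^{+}=0$, and Einstein-ness is imported from the classification of solitons with half-harmonic Weyl tensor \cite{Wu}; and the endgame does not invoke the Einstein gap theorem of \cite{LeBrun}, only Yang's rigidity theorem \cite{yang2000rigidity}, which applies because $K=\overline K\ge0.3069>(\sqrt{1249}-23)/120$.
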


In the next part of the paper, motivated by the classical gap theorem of Gursky and LeBrun \cite{LeBrun}, we establish a weighted integral gap theorem for the half-Weyl tensor on four-dimensional compact gradient shrinking Ricci solitons.

\begin{theorem}
\label{ThmC}
    Let $(M^4,\,g,\,f)$ be a $4$-dimensional compact gradient shrinking Ricci soliton satisfying \eqref{grs}. If $\overline{K} \geq \varepsilon $, $S \geq \delta$ and $\frac{21}{2}<2 \delta+12 \varepsilon$, then the following inequality holds:
    \begin{equation}\label{4_III_Gap}
        \int_M|W^\pm|^2 e^{-f}\,dV_g \leq \alpha\int_M S^2 e^{-f}\,dV_g,
    \end{equation} where $\alpha =\frac{4(1-\varepsilon)}{\delta}-\frac{2}{3}-\left(1-\varepsilon\right)+ \frac{\delta}{6}$.
\end{theorem}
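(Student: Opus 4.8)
The plan is to integrate the Weitzenb\"ock-type formula \eqref{weitzenbock} against the weight $e^{-f}$. Since $M^4$ is compact, the drift Laplacian satisfies $\int_M \Delta_f u\, e^{-f}\,dV_g = 0$ (because $e^{-f}\Delta_f u = \div(e^{-f}\nabla u)$), so applying this to $u=|W^+|^2$ and discarding the nonnegative term $2|\nabla W^+|^2$ yields the basic inequality
\begin{equation*}
4\int_M |W^+|^2 e^{-f}\,dV_g \le \int_M \Big(36\,\det W^+ + \langle (\mathring{Ric}\odot\mathring{Ric})^+, W^+\rangle\Big)e^{-f}\,dV_g.
\end{equation*}
The entire argument then consists of bounding this right-hand side by a combination of $\int_M |W^+|^2 e^{-f}\,dV_g$ and $\int_M S^2 e^{-f}\,dV_g$.

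Next I would convert the hypothesis $\overline{K}\ge\varepsilon$ into pointwise control on the eigenvalues of $W^+$. Since the Kulkarni--Nomizu term $\tfrac12\He f\odot g$ alters only the Ricci part of the curvature, the modified tensor $\overline{R}$ has the same self-dual Weyl part $W^+$ as $R$; using the soliton equation \eqref{grs} one finds $\overline{R} = W + \big(\tfrac12 - \tfrac{S}{24}\big)g\odot g$, so the modified sectional curvature of a plane is its Weyl sectional curvature shifted by the scalar $1-\tfrac{S}{12}$. Parametrizing oriented two-planes by $S^2\times S^2$ (the product of the unit spheres in $\Lambda^+$ and $\Lambda^-$), the minimum Weyl sectional curvature equals $\tfrac12(w_1^+ + w_1^-)$, where $w_1^\pm$ denotes the lowest eigenvalue of $W^\pm$; the bound $\overline{K}\ge\varepsilon$ thus forces $w_1^+ \ge \tfrac{S}{6} - 2(1-\varepsilon)$, and since $W^+$ is trace-free this produces an explicit pointwise bound $|W^+|\le \Lambda(S)$ by an affine, decreasing function of $S$. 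These are precisely the key estimates of \cite{CH}.

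With this in hand I would estimate the cubic and coupling terms. Writing $\det W^+\le \tfrac{1}{3\sqrt6}|W^+|^3$ and bounding the coupling term by $c_0|\mathring{Ric}|^2|W^+|$ (a standard algebraic inequality for the Kulkarni--Nomizu square), the pointwise bound $|W^+|\le\Lambda(S)$ converts both into quadratic expressions, of the schematic form $\lesssim \Lambda(S)\,|W^+|^2$ and $\lesssim \Lambda(S)\,|\mathring{Ric}|^2$. To remove the traceless Ricci I would invoke the soliton identity $\Delta_f S = 2S - 2|Ric|^2$, which after integration gives $\int_M |\mathring{Ric}|^2 e^{-f}\,dV_g = \int_M (S - \tfrac{S^2}{4})e^{-f}\,dV_g$; combined with $S\ge\delta$ (so that $S\le \tfrac{S^2}{\delta}$ and $\int_M S|W^+|^2 e^{-f}\ge \delta\int_M |W^+|^2 e^{-f}$), every scalar integral is absorbed into $\int_M S^2 e^{-f}\,dV_g$. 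Collecting terms reduces the estimate to $c_1\int_M |W^+|^2 e^{-f} \le c_2\int_M S^2 e^{-f}$.

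The main obstacle is the bookkeeping of constants in this last step: the coefficient $c_1$ of $\int_M|W^+|^2 e^{-f}\,dV_g$ must be shown positive, and the role of the hypothesis $\tfrac{21}{2} < 2\delta + 12\varepsilon$ is exactly to guarantee $c_1>0$, allowing one to divide and read off $\alpha = c_2/c_1$ in the stated form. The delicate points are controlling the scalar-weighted cross terms $\int_M S|W^+|^2 e^{-f}$ and $\int_M S|\mathring{Ric}|^2 e^{-f}$ sharply enough (using $S\ge\delta$ and $\delta\le 12(1-\varepsilon)$ to keep the auxiliary coefficients of the correct sign) so that no term is wasted. Finally, the statement for $W^-$ follows by running the same argument after reversing the orientation, which interchanges $W^+$ and $W^-$ while preserving \eqref{grs}, $\overline{K}\ge\varepsilon$ and $S\ge\delta$.
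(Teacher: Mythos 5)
Your overall route --- integrate the Weitzenb\"ock formula \eqref{weitzenbock} against $e^{-f}$, discard the gradient term, turn the cubic and coupling terms into quadratic ones via pointwise bounds coming from $\overline{K}\ge\varepsilon$, and eliminate $|\mathring{Ric}|^2$ through the weighted identity $\int_M|\mathring{Ric}|^2e^{-f}dV_g=\int_M\big(S-\tfrac{S^2}{4}\big)e^{-f}dV_g$ --- is genuinely different from the paper's proof and is viable in principle. However, your derivation of the key pointwise estimate has a factor-of-two error that is fatal to the bookkeeping. From \eqref{grs} one has $Hess\,f=g-Ric$, and the four-dimensional decomposition $R=W+\tfrac12\mathring{Ric}\odot g+\tfrac{S}{24}\,g\odot g$ gives
\begin{equation*}
\overline{R}=W+\Big(\tfrac12-\tfrac{S}{12}\Big)\,g\odot g,
\end{equation*}
so the sectional-curvature shift is $1-\tfrac{S}{6}$ (this is exactly the paper's normal form $\bar{A}^{\pm}=W^{\pm}+(1-\tfrac{S}{6})Id$), not $1-\tfrac{S}{12}$ as you wrote. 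Consequently the estimate of \cite{CH} (the last line of \eqref{Kbarboundineq}) is $\tfrac{1}{\sqrt6}|W^{+}|\le 2(1-\varepsilon)-\tfrac{S}{3}$, while your argument only produces the weaker bound $\tfrac{1}{\sqrt6}|W^{+}|\le 2(1-\varepsilon)-\tfrac{S}{6}$. With your bound, the coefficient of $\int_M|W^{+}|^2e^{-f}dV_g$ after collecting terms is $2\delta+24\varepsilon-20$, and its positivity is \emph{not} implied by $\tfrac{21}{2}<2\delta+12\varepsilon$; in fact it can never be positive: on a compact shrinker $\int_M S\,dV_g=4\,Vol(M)$ forces $\delta\le 4$, and integrating the third line of \eqref{Kbarboundineq} forces $\varepsilon\le\tfrac13$, whence $2\delta+24\varepsilon\le 16<20$. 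So, as written, you cannot divide, and the proof stops.

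If you instead use the correct estimate, your computation does close: one obtains
\begin{equation*}
2\big(2\delta+12\varepsilon-10\big)\int_M|W^{+}|^2e^{-f}dV_g\;\le\;\Big(4(1-\varepsilon)-\tfrac{2\delta}{3}\Big)\Big(\tfrac{1}{\delta}-\tfrac14\Big)\int_MS^2e^{-f}dV_g,
\end{equation*}
and the coefficient on the right is exactly the stated $\alpha$. Note, though, that the resulting ratio $\alpha/\big(2(2\delta+12\varepsilon-10)\big)$ is \emph{not} $\alpha$, contrary to your claim that one can ``read off $\alpha=c_2/c_1$''; the hypothesis $\tfrac{21}{2}<2\delta+12\varepsilon$ gives $2(2\delta+12\varepsilon-10)>1$, which together with $\alpha\ge 0$ (true since $\delta\le 4$ and $\delta\le 6(1-\varepsilon)$) shows the ratio is dominated by $\alpha$ --- that is the hypothesis's actual role here, since mere positivity of $c_1$ needs only $2\delta+12\varepsilon>10$. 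Once repaired, your argument differs substantially from the paper's and proves more: the paper argues by contradiction in the style of Yang, keeping the gradient terms, choosing $t>0$ with $\int_M(|W^{+}|-t|W^{-}|)e^{-f}dV_g=0$, combining Kato's inequality with the weighted Poincar\'e inequality $\lambda_1\ge1$ from Wei--Wylie \cite{WW}, and showing the resulting quadratic form in $t$ has negative discriminant precisely when $\tfrac{21}{2}<2\delta+12\varepsilon$; that argument yields the bound only for \emph{at least one} of $W^{\pm}$, whereas your direct integration (corrected) gives it for both $W^{+}$ and $W^{-}$, with a better constant.
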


It is of significant interest to investigate the topological properties of compact four-dimensional gradient shrinking Ricci solitons. The classical Hitchin--Thorpe inequality  \cite{Thorpe, Hitchin} asserts that any compact four-dimensional Einstein manifold sa\-tis\-fies
\begin{equation}
\label{HTeq}
\chi(M)\geq 1.5\,|\tau (M)|,
\end{equation} where $\chi(M)$ and $\tau(M)$ denote the  Euler cha\-rac\-teristic and  the signature of $M^4,$ respectively (cf. \cite[Theorem 6.35]{Besse}). This inequality provides a purely topological obstruction to the existence of an Einstein metric on a given compact four-manifold.

Since gradient Ricci solitons are natural generalizations of Einstein ma\-ni\-folds-- and given that the known examples of nontrivial gradient shrinking Ricci solitons on $\mathbb{CP}^2\sharp (- \mathbb{CP}^2)$ and $\mathbb{CP}^2\sharp 2(- \mathbb{CP}^2)$ also satisfy (\ref{HTeq})--- it is natural to ask whether a similar topological restriction holds in this more general setting\footnote{As shown in \cite{Derd, FLGR0, Li}, any compact four-dimensional gradient shrinking Ricci soliton necessarily satisfies $\chi(M)>|\tau(M)|$ (i.e., Berger's inequality).}. This question was explicitly posed by H.-D. Cao \cite[Problem 6]{caoALM11}, and partial answers have been obtained in \cite{CRZpams,MR3128968,MR2672426,Tadano}. In the sequel, by imposing a lower bound on the modified sectional curvature $\overline{K},$ we establish a Hitchin--Thorpe type inequality for $4$-dimensional gradient shrinking Ricci solitons.

\begin{theorem}
\label{Thm_HT}
     Let $(M^4,\,g,\,f)$ be a $4$-dimensional oriented compact gradient shrin\-king Ricci soliton satisfying \eqref{grs} with $\overline{K} \geq \varepsilon = 0.184.$ If 
    \begin{equation}\label{6_Asump}
    \int_M |\delta W^{+}|^2\,dV_{g} \leq \int_M \frac{S}{6}|W^{+}|^2\, dV_{g},
    \end{equation} then  
    \begin{equation}
    \chi(M)>\frac{3}{2}\,|\tau(M)|.
    \end{equation}
\end{theorem}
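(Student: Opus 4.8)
The plan is to reduce the statement, via the Gauss--Bonnet and Hirzebruch signature formulas, to a single scalar integral inequality, and then to feed into it two exact soliton integral identities together with the eigenvalue structure of the modified curvature operator $\overline{R}$. Recall that in dimension four
\[
8\pi^2\chi(M)=\int_M\Big(|W^+|^2+|W^-|^2-\tfrac12|\mathring{Ric}|^2+\tfrac{S^2}{24}\Big)\,dV_g,\qquad 12\pi^2\tau(M)=\int_M\big(|W^+|^2-|W^-|^2\big)\,dV_g.
\]
Fix a constant $c>\tfrac32$. Since every compact shrinker satisfies Berger's inequality $\chi>|\tau|$ (see the footnote in the introduction), the case $\tau\le0$ is immediate, so I assume $\tau>0$; then forming $\chi-c\tau$ and discarding the nonnegative $|W^-|^2$ term, it suffices to bound $\int_M|W^+|^2$ and $\int_M|\mathring{Ric}|^2$ from above by $\int_M S^2$. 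For the Ricci term I would use an \emph{exact} unweighted identity: integrating $\Delta_f S=2S-2|Ric|^2$ over $M$ (using $\int_M\Delta S\,dV_g=0$, $\nabla S=2Ric(\nabla f)$ and $\Delta f=4-S$) yields
\[
\int_M|\mathring{Ric}|^2\,dV_g=\int_M\Big(\tfrac{S^2}{4}-S\Big)\,dV_g,
\]
which substituted into $\chi-c\tau$ eliminates the traceless Ricci term and reduces the whole theorem to proving $\int_M\big[(1-\tfrac{2c}{3})|W^+|^2+\tfrac{S}{2}-\tfrac{S^2}{12}\big]\,dV_g\ge0$.

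The next step extracts the geometric content of $\overline{K}\ge\varepsilon$. Writing $Hess\,f=g-Ric$ from \eqref{grs}, the traceless-Ricci contribution of $\tfrac12 Hess\,f\odot g$ in \eqref{modR} exactly cancels that of $R$, so $\overline{R}$ has the \emph{same Weyl tensor as $g$ but vanishing traceless Ricci part}; concretely $\overline{R}$ is block diagonal on $\Lambda^+\oplus\Lambda^-$ with blocks $W^\pm+(1-\tfrac{S}{6})\,\mathrm{Id}$. Since a decomposable unit $2$-form splits into equal-norm self- and anti-self-dual parts, the condition $\overline{K}\ge\varepsilon$ is equivalent to $w^+_{\min}+w^-_{\min}\ge 2\varepsilon-2+\tfrac{S}{3}$, where $w^\pm_{\min}$ denote the least eigenvalues of $W^\pm$. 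Summing the modified sectional curvatures over the six coordinate planes gives the pointwise bound $S\le 6(1-\varepsilon)$, which in particular makes $\tfrac S2-\tfrac{S^2}{12}=\tfrac{S}{12}(6-S)\ge0$ pointwise; the eigenvalue inequality also yields a pointwise control $|W^+|\le\phi(S,\varepsilon)$.

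To bound $\int_M|W^+|^2$ I would integrate the Weitzenb\"ock formula \eqref{weitzenbock} against the \emph{unweighted} measure. Using $\int_M\Delta_f|W^+|^2\,dV_g=\int_M|W^+|^2(4-S)\,dV_g$ this becomes
\[
2\int_M|\nabla W^+|^2\,dV_g=\int_M\Big(-S|W^+|^2+36\det W^+ +\langle(\mathring{Ric}\odot\mathring{Ric})^+,W^+\rangle\Big)\,dV_g.
\]
I would then combine this with the Bochner/second-Bianchi identity relating $\int_M|\nabla W^+|^2$ and $\int_M|\delta W^+|^2$, invoke hypothesis \eqref{6_Asump}, and estimate $\det W^+\le\tfrac{\sqrt6}{18}|W^+|^3$ together with the modified-curvature control of the coupling term $\langle(\mathring{Ric}\odot\mathring{Ric})^+,W^+\rangle$ (handling the $\mathring{Ric}$ factor through the identity above and $\phi(S,\varepsilon)$), as in the estimates of \cite{CH}, \cite{CRZ}, \cite{CRT}. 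The outcome is a bound $\int_M|W^+|^2\,dV_g\le K_0\int_M\big(\tfrac S2-\tfrac{S^2}{12}\big)\,dV_g$ for an explicit $K_0=K_0(\varepsilon)$.

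The reduced inequality then closes with $c=\tfrac32\big(1+1/K_0\big)$, and tracking the constants from $\varepsilon=0.186$ produces exactly $c=1/0.4613$. Strictness is obtained from the equality analysis: equality would force $W^-\equiv0$, $\nabla W^+\equiv0$, and saturation of every pointwise estimate, pinning the metric to $\mathbb{S}^4$ or $\mathbb{CP}^2$, for which $\chi>\tfrac{1}{0.4613}\tau$ holds strictly in any case. The main obstacle is the numerology of the last two steps: controlling $\det W^+$ and the $\mathring{Ric}$-coupling sufficiently sharply through $\overline{K}\ge\varepsilon$, and integrating \eqref{6_Asump} precisely enough, so that the resulting $K_0$ lands on the value giving exactly $0.4613$; loosening any of these estimates degrades the admissible $\varepsilon$ and the final constant.
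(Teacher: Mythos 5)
Your opening reduction is correct, and it is essentially the same algebra the paper performs: subtracting $c$ times Hirzebruch's formula from Gauss--Bonnet and eliminating $\int_M|\mathring{Ric}|^2\,dV_g$ via the integrated soliton identity $\int_M|\mathring{Ric}|^2\,dV_g=\int_M\big(\tfrac{S^2}{4}-S\big)\,dV_g$ reduces the theorem to the single inequality
\begin{equation*}
\int_M|W^+|^2\,dV_g\;\le\;\frac{3}{2c-3}\int_M\Big(\frac{S}{2}-\frac{S^2}{12}\Big)\,dV_g ,
\end{equation*}
and your pointwise consequences of $\overline{K}\ge\varepsilon$ agree with \eqref{Kbarboundineq}. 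The genuine gap is that you never prove this inequality: you assert that integrating the Weitzenb\"ock formula \eqref{weitzenbock}, a Bianchi-type identity, hypothesis \eqref{6_Asump}, the bound $\det W^+\le\tfrac{\sqrt{6}}{18}|W^+|^3$ and Lemma~\ref{lemK} will produce an explicit $K_0(\varepsilon)$, and you yourself flag the ``numerology'' as the main obstacle. That step is the entire content of the theorem, and the route you describe faces structural obstructions, not just bookkeeping. First, Theorem~\ref{Thm_HT} assumes no lower bound on $S$, unlike Theorems~\ref{ThmB}, \ref{Thm2c} and \ref{ThmC}; every Weitzenb\"ock-based coefficient argument in this paper needs $S\ge\delta$ to make the coefficient of $|W^+|^2$ positive after estimating $\sqrt{6}|W^+|^3\le(12(1-\varepsilon)-2S)|W^+|^2$, and where $S$ is small the ratio of $|W^+|^2$ to your target $\tfrac{S}{12}(6-S)$ is pointwise unbounded, so no pointwise refinement can rescue it. Second, \eqref{6_Asump} is an \emph{upper} bound on $\int_M|\delta W^+|^2\,dV_g$, which enters any Bianchi/Weitzenb\"ock identity with the wrong sign for your purpose: trading $\int_M|\nabla W^+|^2\,dV_g$ for a multiple of $\int_M|\delta W^+|^2\,dV_g$ and then invoking \eqref{6_Asump} weakens, rather than strengthens, an attempted upper bound on $\int_M|W^+|^2\,dV_g$.

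The paper uses \eqref{6_Asump} in an entirely different way, and this is the missing idea. Via the weighted Yamabe functional machinery of \cite{CH} (Theorem 4.1 and Remark 4.1 there), the hypothesis forces a conformal, Gursky--LeBrun-type gap inequality in the \emph{opposite} direction: the topological \emph{lower} bound $\int_M|W^+|^2\,dV_g>\tfrac{4}{11}\pi^2\,(2\chi(M)+3\tau(M))$, i.e.\ \eqref{6_HalfWeylLower}. This is then played against an upper bound $8\pi^2\chi(M)<\psi(\varepsilon)\,Vol(M)$, obtained from the pointwise curvature bounds \eqref{Kbarboundineq} together with $\int_M S\,dV_g=4\,Vol(M)$, and the argument is run for both orientations to produce the two-sided bound on $\tau(M)/\chi(M)$, of which the stated inequality is one side. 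No bound of the form $\int_M|W^+|^2\,dV_g\le K_0\int_M\big(\tfrac{S}{2}-\tfrac{S^2}{12}\big)\,dV_g$ is ever established, and your proposal contains no substitute for the conformal gap theorem; your closing remarks about equality analysis ($W^-\equiv 0$, $\nabla W^+\equiv 0$, rigidity to $\mathbb{S}^4$ or $\mathbb{CP}^2$) are likewise asserted without derivation. As it stands, the proposal is a correct reduction plus a conjecture, not a proof.
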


\medskip

The remainder of this paper is organized as follows. In Section \ref{Sec2}, we review background material and key preliminary results that will be used in the proofs of our main theorems. In Section \ref{Sec3}, we prove Theorem \ref{ThmA}. Section \ref{Sec4} is divided into three parts: Section 4.1 contains the proofs of Theorem \ref{ThmB} and Theorem \ref{thmNew}; Section 4.2 is devoted to the proof of Theorem \ref{ThmC}; and Section 4.3 establishes Theorem \ref{Thm_HT}.

\section{Background}
\label{Sec2}

In this section, we review basic background material and collect several identities that will play a fundamental role in the proofs of our main results.

We begin by recalling some fundamental curvature tensors on a Riemannian manifold $(M^n,\,g)$ of dimension $n\ge 3$. The first is the {\it Weyl tensor} $W$, defined via the curvature decomposition formula:
\begin{eqnarray}
\label{weyl}
R_{ijkl}&=&W_{ijkl}+\frac{1}{n-2}\big(R_{ik}g_{jl}+R_{jl}g_{ik}-R_{il}g_{jk}-R_{jk}g_{il}\big) \nonumber\\
 &&-\frac{S}{(n-1)(n-2)}\big(g_{jl}g_{ik}-g_{il}g_{jk}\big),
\end{eqnarray} where $R_{ijkl}$ stands for the Riemann curvature tensor. It is well known that $W=0$ in dimension $n=3.$ The second one is the {\it Cotton tensor} $C$ given by
\begin{equation}
\label{cotton} \displaystyle{C_{ijk}=\nabla_{i}R_{jk}-\nabla_{j}R_{ik}-\frac{1}{2(n-1)}\big(\nabla_{i}R
g_{jk}-\nabla_{j}R g_{ik}).}
\end{equation} Notice that $C_{ijk}$ is skew-symmetric in the first two indices and trace-free in any two indices. Moreover, for $n\geq 4,$ we have 

\begin{equation}
C_{ijk}=-\frac{(n-2)}{(n-3)}\nabla_{l}W_{ijkl}.
\end{equation} 

From now on, we restrict our attention to the case $n=4.$ On an oriented Riemannian manifold $(M^4,\,g),$ the bundle of $2$-forms $\Lambda^2$ can be  decomposed into a direct sum
\begin{equation}
\label{lk1}
\Lambda^2=\Lambda^{+}\oplus\Lambda^{-}.
\end{equation} This decomposition is conformally invariant and allows us to decompose the Weyl tensor, viewed as an endomorphism of $\Lambda^2$, into components  
\begin{equation}
\label{ewq}
W = W^+\oplus W^-,
\end{equation} where $W^\pm:\Lambda^\pm M\longrightarrow\Lambda^\pm M$ are called the {\it self-dual} part and {\it anti-self-dual} part of the Weyl tensor, respectively. 

Fixing a point $p\in M^4$, we diagonalize $W^\pm$ so that their respective eigenvalues are denoted by $\lambda_i$ and $\mu_{i},$ $1\le i \le 3.$ In particular, we have
\begin{equation}
\label{eigenvalues}
\begin{cases}
 \lambda_1\geq \lambda_2 \geq \lambda_3\, \,\,\,  \hbox{and}\,\,\,\, \lambda_1 +\lambda_2 +\lambda_3= 0, &\\
\mu_1\geq \mu_2 \geq \mu_3\,\,\,\,  \hbox{and}\,\,\,\,\mu_1 +\mu_2 +\mu_3= 0.&
\end{cases}
\end{equation}

Next, we recall the following Kato inequality. 

\begin{lemma}
	\label{Katoinequality}
	Let $(M^{4},\,g)$ be an oriented four-dimensional Riemannian ma\-ni\-fold. Then we have:
	\[|\nabla |W^+|| \leq |\nabla W^+|.\]
	Equality holds if and only if there is a one-form $\nu$ such that
	$\nabla W^+=\nu \otimes W^+.$ 
\end{lemma}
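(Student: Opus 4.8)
My plan is to reduce the statement to the equality case of the Cauchy–Schwarz inequality applied fiberwise. Throughout, $\langle\cdot,\cdot\rangle$ and $|\cdot|$ denote the natural fiberwise inner product and norm on the bundle in which $W^+$ lives, and I fix a local orthonormal frame $\{e_k\}$ so that $|\nabla W^+|^2=\sum_k|\nabla_k W^+|^2$, where $\nabla_k W^+$ is the covariant derivative of $W^+$ in the direction $e_k$.

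First I would work on the open set $U=\{p\in M^4:\,W^+(p)\neq 0\}$, on which $|W^+|=\sqrt{\langle W^+,W^+\rangle}$ is smooth and positive. Differentiating yields
$$\nabla_k|W^+|=\frac{\langle \nabla_k W^+,\,W^+\rangle}{|W^+|}.$$
For each fixed $k$, Cauchy–Schwarz gives $\langle \nabla_k W^+,W^+\rangle^2\le |\nabla_k W^+|^2\,|W^+|^2$, hence $(\nabla_k|W^+|)^2\le |\nabla_k W^+|^2$. Summing over $k$ then produces
$$\big|\nabla|W^+|\big|^2=\sum_k(\nabla_k|W^+|)^2\le \sum_k|\nabla_k W^+|^2=|\nabla W^+|^2,$$
and taking square roots gives the desired inequality on $U$.

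For the equality case, I would note that equality in the summed estimate forces equality in every fiberwise Cauchy–Schwarz step, which happens precisely when, for each $k$, the tensor $\nabla_k W^+$ is a scalar multiple of $W^+$. Setting $\nu_k:=|W^+|^{-2}\langle \nabla_k W^+,W^+\rangle$, a smooth function on $U$, this reads $\nabla_k W^+=\nu_k\,W^+$, i.e. $\nabla W^+=\nu\otimes W^+$ for the one-form $\nu=\sum_k \nu_k\,e^k$. The converse is immediate: if $\nabla W^+=\nu\otimes W^+$, then $\langle \nabla_k W^+,W^+\rangle=\nu_k|W^+|^2$ and $|\nabla_k W^+|=|\nu_k|\,|W^+|$, so every Cauchy–Schwarz step is saturated and equality holds.

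The one genuinely delicate point, which I would address last, is the behavior across the zero locus $Z=\{W^+=0\}$, where $|W^+|$ is merely Lipschitz and may fail to be differentiable. Since $|W^+|$ is a nonnegative Lipschitz function, its gradient vanishes almost everywhere on $Z$, while $|\nabla W^+|\ge 0$ there; thus the inequality holds almost everywhere and, by approximation, in the weak (distributional) sense across all of $M^4$. I expect this passage across $Z$ to be the only subtle step, since on $U$ the inequality is simply Cauchy–Schwarz.
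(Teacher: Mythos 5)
The paper offers no proof of this lemma at all: it is recalled as standard background (the classical Kato inequality together with its equality case), so there is no in-paper argument to compare against line by line. Your fiberwise Cauchy--Schwarz argument is precisely the standard proof, and it is sound: the identity $\nabla_k|W^+|=\langle\nabla_k W^+,W^+\rangle/|W^+|$ on $U=\{W^+\neq0\}$, the termwise Cauchy--Schwarz estimate, and the observation that equality of the sums forces equality in each summand (legitimate, since each summand separately satisfies its own inequality) are all correct, as is your a.e./distributional treatment of the inequality across the zero locus, where $|W^+|$ attains its minimum and hence has vanishing gradient at every point of differentiability.

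One caveat on the equality case, which is the only place where your write-up should be more explicit: your argument produces the one-form $\nu$ (with components $\nu_k=\langle\nabla_kW^+,W^+\rangle/|W^+|^2$, i.e.\ $\nu=d\log|W^+|$) only on $U$, and this form need not extend across $\partial U$. Schematically, if $W^+=x\,\omega$ near a hypersurface $\{x=0\}$ with $\omega$ a parallel unit tensor, then equality holds at every point of $U$, yet at $x=0$ one has $\nabla W^+\neq 0=\nu\otimes W^+$ for any choice of $\nu$, so no globally defined one-form can exist. Hence the equality characterization must be read pointwise, or equivalently on the locus where $W^+\neq0$, which is in fact how the paper uses it: the equality case feeds into Proposition \ref{kahlerform}, a local statement, in the proof of Theorem \ref{ThmA}. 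This is an interpretive refinement of the statement rather than a defect in your argument, but stating the domain of $\nu$ explicitly would close the gap.
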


We will also use the following algebraic inequality

\begin{equation}
	\label{eqdet}
	\det\, W^{+}\leq \frac{\sqrt{6}}{18}|W^{+}|^{3}.
\end{equation} Moreover, equality holds in (\ref{eqdet}) if and only if $\lambda_{2}=\lambda_{3}=-\frac{1}{2}\lambda_{1}.$ 

For completeness, we present here the proof of \eqref{eqdet}. From \eqref{eigenvalues}, we have

\begin{equation*} |W^{+}|^2 = (\lambda_{1})^2 + (\lambda_{2})^2 + (\lambda_{3})^2\geq (\lambda_{1})^2 +\frac{(\lambda_{2}+\lambda_{3})^2}{2}=\frac{3}{2}(\lambda_{1})^2. \end{equation*} Consequently, \begin{equation*} \det\, W^{+} = \lambda_{1}\lambda_{2}\lambda_{3}\leq \lambda_{1}\frac{(\lambda_{2}+\lambda_{3})^2}{4}=\frac{1}{4}(\lambda_{1})^3 \leq \frac{\sqrt{6}}{18}|W^{+}|^{3}, \end{equation*} as claimed.

The curvature and topology of a compact $4$-dimensional manifold are related through the classical Gauss--Bonnet--Chern formula
\begin{equation}
\label{6_HTest3}
\chi(M)=\frac{1}{8\pi^2}\int_M\left(|W^+|^2+|W^-|^2+\frac{S^2}{24}-\frac{1}{2}|\mathring{Ric}|^2\right)dV_g,
\end{equation} and Hirzebruch's theorem 
\begin{equation}
\label{6_HTest311}
\tau(M)=\frac{1}{12\pi^{2}}\int_M\left(|W^+|^2-|W^-|^2\right)dV_{g},
\end{equation} where $\chi(M)$ and $\tau(M)$ denote the  Euler cha\-rac\-teristic and  the signature of $M^4,$ respectively; see \cite[Chapter 13]{Besse} for more details. It follows from (\ref{6_HTest3}) and (\ref{6_HTest311}) that every compact $4$-dimensional Einstein manifold must satisfy the Hitchin--Thorpe inequality \cite{Thorpe,Hitchin}, namely, 
\begin{equation}
\label{HTeq11}
\chi(M)\geq 1.5\,|\tau (M)|.
\end{equation} See also \cite[Theorem 6.35]{Besse}.

\subsection{Four-dimensional gradient Ricci solitons}
From now on, let ($M^4,\,g,\, f$) be a complete $4$-dimensional gradient shrinking Ricci soliton satisfying 

\begin{equation}
\label{eqGRS} 
Ric+Hess\,f=g. 
\end{equation} Tracing the soliton equation (\ref{eqGRS}) we get 

\begin{equation}
\label{traceGRS}
S+\Delta f=4,
\end{equation} where $S$ denotes the scalar curvature of $M^4.$

We now collect some well-known identities for gradient shrinking Ricci solitons.

\begin{lemma}[\cite{Hamilton2}]
\label{GRSiden}
Let ($M^4,\,g,\, f$) be a $4$-dimensional gradient shrinking Ricci soliton satisfying \eqref{grs}. Then we have:
\begin{enumerate}
    \item $\nabla S=2 Ric(\nabla f)$;
    \vspace{0.20cm}
    \item $\Delta_f S=2  S-2|\text{Ric}|^2 $;
        \vspace{0.20cm}
    \item $S+|\nabla f|^2=2f,$ after normalizing;
        \vspace{0.20cm}
    \item $\Delta_f R_{ij}=2R_{i j}-2 R_{i k j l} R_{k l}.$
\end{enumerate}
Here, $\Delta_f:=\Delta \cdot-\nabla_{\nabla f} \cdot$ denotes the drifted Laplacian.
\end{lemma}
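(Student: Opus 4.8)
The plan is to derive the four identities in order, each from the soliton equation \eqref{eqGRS}, written in coordinates as $R_{ij}+\nabla_i\nabla_j f=g_{ij}$, together with two standard tools: the contracted second Bianchi identity $\nabla^i R_{ij}=\tfrac12\nabla_j S$ and the commutation (Ricci/Bochner) formulas for covariant derivatives. The trace relation \eqref{traceGRS}, namely $S+\Delta f=4$, is already at our disposal. Individually only part (4) is laborious; parts (1)--(3) are short and feed into one another.

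For part (1) I would apply the divergence $\nabla^i$ to \eqref{eqGRS}. The Bochner commutation $\nabla^i\nabla_i\nabla_j f=\nabla_j(\Delta f)+R_{jk}\nabla^k f$ handles the Hessian term, the contracted Bianchi identity handles $\nabla^i R_{ij}$, and $\nabla_j(\Delta f)=-\nabla_j S$ follows from \eqref{traceGRS}. The expression then collapses to $\tfrac12\nabla_j S-\nabla_j S+R_{jk}\nabla^k f=0$, i.e. $\nabla_j S=2R_{jk}\nabla^k f$, which is part (1). For part (3) I would compute the gradient of $S+|\nabla f|^2-2f$: using part (1) for $\nabla_j S$ and \eqref{eqGRS} to evaluate $\nabla_j|\nabla f|^2=2\nabla^k f\,\nabla_j\nabla_k f=2\nabla_j f-2R_{jk}\nabla^k f$, the Ricci contributions cancel and the gradient equals $2\nabla_j f-\nabla_j(2f)=0$. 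Since $M$ is connected, $S+|\nabla f|^2-2f$ is constant, and adding this constant to $f$ (the normalization) gives part (3).

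For part (2) I would take the divergence of part (1): $\Delta S=2(\nabla^j R_{jk})\nabla^k f+2R_{jk}\nabla^j\nabla^k f$. The first term equals $\langle\nabla S,\nabla f\rangle$ by contracted Bianchi, and the second equals $2R_{jk}(g^{jk}-R^{jk})=2S-2|Ric|^2$ by \eqref{eqGRS}. Hence $\Delta S=\langle\nabla f,\nabla S\rangle+2S-2|Ric|^2$, and subtracting $\nabla_{\nabla f}S=\langle\nabla f,\nabla S\rangle$ gives $\Delta_f S=2S-2|Ric|^2$, which is part (2).

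Part (4) is the crux. First I would produce the soliton's third-order identity: since differentiating \eqref{eqGRS} gives $\nabla_a\nabla_b\nabla_c f=-\nabla_a R_{bc}$, the commutation formula $\nabla_a\nabla_b\nabla_c f-\nabla_b\nabla_a\nabla_c f=-R_{abcd}\nabla^d f$ yields
\[
\nabla_i R_{jk}-\nabla_j R_{ik}=R_{ijkl}\nabla^l f.
\]
Taking the divergence $\nabla^i$ of this relation, the left side becomes $\Delta R_{jk}-\nabla^i\nabla_j R_{ik}$ and the right side becomes $(\nabla^i R_{ijkl})\nabla^l f+R_{ijkl}\nabla^i\nabla^l f$. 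I would then: rewrite $\nabla^i R_{ijkl}$ through the contracted second Bianchi identity as derivatives of the Ricci tensor, one of whose terms is $-\nabla_{\nabla f}R_{jk}$ and precisely converts $\Delta$ into $\Delta_f$; substitute $\nabla^i\nabla^l f=g^{il}-R^{il}$ from \eqref{eqGRS} so that $R_{ijkl}\nabla^i\nabla^l f$ supplies the $2R_{ij}$ and $-2R_{ikjl}R_{kl}$ on the right-hand side; and commute $\nabla^i\nabla_j R_{ik}$ into $\tfrac12\nabla_j\nabla_k S$ plus a curvature-quadratic commutator. Finally I would clear the remaining first-order $\nabla f$ terms using part (1) and symmetrize in $(j,k)$. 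The main obstacle is entirely bookkeeping: tracking the signs and index contractions of the curvature commutator and verifying that every $\nabla f$--dependent term cancels, so that the identity reduces cleanly to $\Delta_f R_{ij}=2R_{ij}-2R_{ikjl}R_{kl}$.
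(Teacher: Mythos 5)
The paper states Lemma \ref{GRSiden} without proof, citing Hamilton \cite{Hamilton2}, so there is no internal argument to compare against; your derivation is the standard one and is correct, with parts (1)--(3) checking out exactly as written. Your outline of (4) is the canonical computation and does close up, though two bookkeeping points in your sketch deserve to be made explicit: each of the terms $2R_{jk}$ and $-2R_{ikjl}R_{kl}$ arises as the \emph{sum of two} contributions, one from $R_{ijkl}\nabla^i\nabla^l f = R_{ijkl}(g^{il}-R^{il})$ and the other from expanding $\tfrac12\nabla_j\nabla_k S = \nabla_j\bigl(R_{kl}\nabla^l f\bigr)$ via part (1) together with the commutator produced by $\nabla^i\nabla_j R_{ik}$; and the residual first-order terms cancel not by part (1) alone but because your third-order identity gives $(\nabla_k R_{jl}-\nabla_j R_{kl})\nabla^l f = R_{kjlm}\nabla^l f\,\nabla^m f = 0$ by antisymmetry of the curvature tensor in its last two indices.
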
 In particular, one sees from assertion (2) of Lemma \ref{GRSiden} that
\begin{equation}\label{trRic_RiclapS}
\Delta_f S = 2S - \frac{S^2}{2}-2|\mathring{Ric}|^2,
\end{equation} where 
\begin{equation}\label{Tricdef}
    |\mathring{Ric}|^2=|Ric|^2-\frac{S^2}{4}.
\end{equation}

In \cite{Chen}, Chen showed that any complete ancient solution to the Ricci flow has nonnegative scalar curvature, which implies that $S\geq 0$ for any complete gradient shrinking Ricci soliton. Moreover, $S$ is strictly positive  unless $(M^4,\,g,\,f)$ is the Gaussian shrinking soliton (see \cite{PRS}). Regarding the potential function $f$, H.-D. Cao and Zhou \cite{CZ} proved that 
\begin{equation}
\label{eqfbeh}
\frac{1}{4}\Big(r(x)-c\Big)^{2}\leq f(x)\leq \frac{1}{4}\Big(r(x)+c\Big)^{2},
\end{equation} for all $r(x)\geq r_{0},$ where $r=r(x)$ is the distance function to a fixed point in $M.$ Additionally, they showed that every complete noncompact gradient shrinking Ricci soliton has at most Euclidean volume growth (see \cite[Theorem 1.2]{CZ}). Moreover, these asymptotic estimates are optimal in the sense that they are achieved by the Gaussian shrinking soliton.

As previously mentioned, X. Cao and Tran \cite{CH} showed that any four-dimensional gradient shrinking Ricci soliton obeying  (\ref{grs}) satisfies the following Weitzenb\"ock-type formula. 

\begin{proposition}[\cite{CH}]
\label{propWeitz}
Let ($M^4,\,g,\, f$) be a $4$-dimensional gradient shrinking Ricci soliton satisfying \eqref{grs}. Then we have:
\begin{equation}
\label{weitzenbock_B}
\Delta_{f} |W^{+}|^{2}=2|\nabla W^{+}|^{2}+4|W^{+}|^{2}-36\, \det W^{+}-\langle (\mathring{Ric}\odot \mathring{Ric})^{+},W^{+}\rangle,
\end{equation} where $\odot$ stands for the Kulkarni--Nomizu product. 
\end{proposition}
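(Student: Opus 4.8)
The plan is to combine the drift Bochner identity with the general four-dimensional Weitzenb\"ock formula for the self-dual Weyl tensor, and then to feed in the soliton structure through the identities of Lemma~\ref{GRSiden}. First I would apply the Bochner identity for the drifted Laplacian to the tensor $W^+$, namely
$$\Delta_f|W^+|^2 = 2|\nabla W^+|^2 + 2\langle \Delta_f W^+,\,W^+\rangle,$$
where $\Delta_f W^+ = \Delta W^+ - \nabla_{\nabla f}W^+$ is the drifted rough Laplacian acting on $W^+$. This already accounts for the term $2|\nabla W^+|^2$ and reduces the problem to establishing
$$\langle \Delta_f W^+,\,W^+\rangle = 2|W^+|^2 - 18\det W^+ - \tfrac{1}{2}\langle (\mathring{Ric}\odot\mathring{Ric})^+,\,W^+\rangle.$$

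Next I would invoke the algebraic Weitzenb\"ock formula for $W^+$ valid on any oriented Riemannian four-manifold (in the spirit of Derdzi\'nski), which expresses the rough Laplacian $\Delta W^+$ in terms of the Hodge Laplacian, a scalar-curvature term $\tfrac{S}{2}W^+$, the quadratic self-action $-6\big((W^+)^2\big)_0$ (the trace-free part of the square of $W^+$ regarded as an endomorphism of $\Lambda^+$), and a coupling with the Ricci curvature. Contracting the quadratic self-action with $W^+$ produces $-6\,\mathrm{tr}\big((W^+)^3\big) = -18\det W^+$, using that $\lambda_1+\lambda_2+\lambda_3 = 0$ in \eqref{eigenvalues} forces $\sum_i\lambda_i^3 = 3\lambda_1\lambda_2\lambda_3$; after the Bochner factor of two this is exactly the cubic term $-36\det W^+$, and its equality case matches the one recorded in \eqref{eqdet}.

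The heart of the argument is the conversion of the remaining linear and divergence terms using the soliton structure. Here I would use the contracted second Bianchi identity $\nabla_l W_{ijkl} = -\tfrac{1}{2}C_{ijk}$ from \eqref{cotton} together with the gradient-soliton identity $C_{ijk} = -W_{ijkl}\nabla_l f$, which holds because of \eqref{grs} and the identities in Lemma~\ref{GRSiden}. This replaces the divergence $\delta W^+$ appearing in the Hodge Laplacian by a contraction of $W^+$ with $\nabla f$, and it is precisely this substitution that turns the ordinary Laplacian $\Delta$ into the drifted Laplacian $\Delta_f$ by supplying the $\nabla_{\nabla f}W^+$ term. Feeding in the shrinking normalization $Ric + Hess\,f = g$ converts the scalar-curvature term into the linear reaction $+2|W^+|^2$ (equivalently $+4|W^+|^2$ after the Bochner factor of two), since the soliton constant $\lambda = 1$ plays the role that $S/4$ plays in the Einstein case, while the traceless Ricci enters through the Ricci-coupling term, which I would reorganize into the Kulkarni--Nomizu form $-\langle(\mathring{Ric}\odot\mathring{Ric})^+,\,W^+\rangle$.

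I expect the main obstacle to be the precise bookkeeping of the algebraic reaction terms: verifying that the quadratic self-action of $W^+$ contributes exactly $-18\det W^+$ and, more delicately, that all the Ricci-dependent contributions assemble into the single self-dual Kulkarni--Nomizu projection $(\mathring{Ric}\odot\mathring{Ric})^+$ with the correct coefficient. This requires working in the $3\times 3$ trace-free symmetric matrix representation of $W^+$ on $\Lambda^+$ as in \eqref{eigenvalues}, and carefully tracking the self-dual projections of the quadratic curvature terms produced by the soliton identity $\Delta_f R_{ij} = 2R_{ij} - 2R_{ikjl}R_{kl}$ of Lemma~\ref{GRSiden}. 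The conversion of the divergence terms into the drift form, while conceptually clean via the Cotton--Weyl identity, also demands care with signs and with the constant $\tfrac{1}{2}$ coming from $n = 4$.
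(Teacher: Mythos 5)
Your scaffolding is fine: the drift Bochner identity $\Delta_f|W^+|^2 = 2|\nabla W^+|^2 + 2\langle\Delta_f W^+,W^+\rangle$, the reduction to $\langle\Delta_f W^+,W^+\rangle = 2|W^+|^2 - 18\det W^+ - \tfrac12\langle(\mathring{Ric}\odot\mathring{Ric})^+,W^+\rangle$, and the algebraic fact that $\mathrm{tr}\big((W^+)^3\big)=3\det W^+$ for a trace-free endomorphism are all correct. The gap is the step that carries all of the soliton-specific content: the claimed identity $C_{ijk} = -W_{ijkl}\nabla_l f$ is false on a general gradient shrinking Ricci soliton. The correct identity, due to H.-D. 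Cao and Chen \cite{CaoChen}, is
\begin{equation*}
C_{ijk} = D_{ijk} + W_{ijkl}\nabla_l f ,
\end{equation*}
where $D_{ijk}$ is an explicit tensor built from $Ric$, $S$, $\nabla f$ and $g$ whose vanishing is a strong rigidity condition (it is essentially the pivot of the Bach-flat classification in \cite{CaoChen}), not a consequence of \eqref{grs}. A concrete counterexample to your version: on the shrinking soliton $\mathbb{S}^2\times\mathbb{R}^2$ the Ricci tensor is parallel, so $C\equiv 0$ by \eqref{cotton}; yet, taking $e_1$ tangent to $\mathbb{S}^2$ and $v=\nabla f/|\nabla f|$ (tangent to the flat factor, where $\nabla f\neq 0$), the curvature decomposition gives $W_{1v1v} = 0 - \tfrac12 R_{11} + \tfrac{S}{6} = -\tfrac16 \neq 0$, so $W_{ijkl}\nabla_l f\not\equiv 0$. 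Hence no identity of the form $C_{ijk}=\pm W_{ijkl}\nabla_l f$ can hold. This matters quantitatively: the missing $D$-terms are couplings of $\mathring{Ric}$ and $\nabla f$ of exactly the same order as the $\langle(\mathring{Ric}\odot\mathring{Ric})^+,W^+\rangle$ term you are trying to produce, so they cannot be dismissed as bookkeeping. For the same reason, your conversion of the scalar term $\tfrac{S}{2}W^+$ of the general Weitzenb\"ock formula into the reaction $2W^+$ (i.e. $4|W^+|^2$ after the Bochner factor) is asserted rather than proved: $S$ is not constant on a soliton, and the discrepancy $\big(\tfrac{S}{2}-2\big)|W^+|^2$ must cancel against precisely the Cotton/$D$-contributions you discarded.

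For comparison, the paper itself gives no proof but cites \cite{CH}, where the derivation avoids the Cotton tensor entirely: one starts from the soliton form of Hamilton's evolution equation for the curvature operator, $\Delta_f\mathcal{R} = 2\mathcal{R} - 2\left(\mathcal{R}^2+\mathcal{R}^{\#}\right)$, and uses the four-dimensional block decomposition of $\mathcal{R}$ on $\Lambda^+\oplus\Lambda^-$, in which the diagonal block is $W^+ + \tfrac{S}{12}\,\mathrm{Id}$ and the off-diagonal block encodes $\mathring{Ric}$. Pairing with $W^+$, the Lie-algebra square of the diagonal block produces $-36\det W^+$, the off-diagonal block produces $-\langle(\mathring{Ric}\odot\mathring{Ric})^+,W^+\rangle$, and the drift term and the $4|W^+|^2$ reaction come for free from the evolution equation. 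If you want to salvage your route through the general Riemannian Weitzenb\"ock formula, you must carry the full identity $C_{ijk}=D_{ijk}+W_{ijkl}\nabla_l f$ and verify that the $D$-contributions recombine into the stated right-hand side; that verification is the actual content of the proposition, and it is exactly what your proposal skips.
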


The Kulkarni-Nomizu product $\odot,$  which takes two symmetric $(0, 2)$-tensors and yields a $(0, 4)$-tensor with the same algebraic symmetries as the curvature tensor, is defined by
\begin{eqnarray}
\label{eq76}
(A \odot B)_{ijkl}= A_{ik}B_{jl}+A_{jl}B_{ik}-A_{il}B_{jk}-A_{jk}B_{il}.
\end{eqnarray}

It is also important to recall a sharp estimate for the term involving the traceless Ricci tensor and the self-dual Weyl tensor obtained in \cite[Lemma 6]{CRT}.

\begin{lemma}
	\label{lemK}
	Let $(M^{4},\,g)$ be an oriented $4$-dimensional Riemannian manifold. Then we have:     	
	\begin{equation}
		\label{EqHu}
		\langle (\mathring{Ric}\odot \mathring{Ric})^{+},W^{+}\rangle \leq \frac{\sqrt{6}}{3}|\mathring{Ric}|^{2}|W^{+}|.
	\end{equation} Moreover, equality holds if and only if $W^+$ has eigenvalues 
	$$0\leq \lambda_1=-2 \lambda_2=-2\lambda_3.$$
\end{lemma}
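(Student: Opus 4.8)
The assertion is pointwise and purely algebraic, so the plan is to fix a point $p\in M^4$ and establish the inequality for the symmetric operators $\mathring{Ric}(p)$ and $W^+(p)$ on $\Lambda^+$. First I would diagonalize the trace-free symmetric endomorphism $E:=\mathring{Ric}(p)$ in an oriented orthonormal frame $\{e_i\}$, writing its eigenvalues as $a_1,a_2,a_3,a_4$, which satisfy $\sum_i a_i=0$ and $\sum_i a_i^2=|\mathring{Ric}|^2=:N$. In the associated standard basis $\{\hat\omega_1,\hat\omega_2,\hat\omega_3\}$ of $\Lambda^+$, where $\hat\omega_1=\tfrac{1}{\sqrt2}(e^1\wedge e^2+e^3\wedge e^4)$ and the others are obtained cyclically, a direct computation from the Kulkarni--Nomizu formula \eqref{eq76} shows that $(\mathring{Ric}\odot\mathring{Ric})^+$ is \emph{diagonal} in this basis, with eigenvalues
\[
b_1=a_1a_2+a_3a_4,\qquad b_2=a_1a_3+a_2a_4,\qquad b_3=a_1a_4+a_2a_3.
\]
(The vanishing of the off-diagonal entries uses that $E$ is diagonal in $\{e_i\}$.)

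Since $\mathrm{tr}\,W^+=0$, the trace part of $(\mathring{Ric}\odot\mathring{Ric})^+$ drops out of the pairing, so with $P_0$ denoting the trace-free part of $(\mathring{Ric}\odot\mathring{Ric})^+$ I would write
\[
\langle(\mathring{Ric}\odot\mathring{Ric})^+,W^+\rangle=\langle P_0,W^+\rangle\le |P_0|\,|W^+|,
\]
by the Cauchy--Schwarz inequality for the inner product on self-dual endomorphisms. It therefore suffices to show $|P_0|\le \tfrac{\sqrt6}{3}N$, i.e. $|P_0|^2\le\tfrac23 N^2$. Using $\sum_k b_k=\sum_{i<j}a_ia_j=-\tfrac12 N$, the expansion $\sum_k b_k^2=\sum_{i<j}(a_ia_j)^2+6\,a_1a_2a_3a_4$, and the identity $\sum_{i<j}(a_ia_j)^2=\big(\sum_{i<j}a_ia_j\big)^2+2\,a_1a_2a_3a_4$ (valid since $\sum_i a_i=0$), a short computation gives
\[
|P_0|^2=\sum_k b_k^2-\tfrac13\Big(\sum_k b_k\Big)^2=\tfrac{N^2}{6}+8\,a_1a_2a_3a_4.
\]

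The problem thus reduces to the elementary inequality $a_1a_2a_3a_4\le\tfrac1{16}N^2$ under the constraint $\sum_i a_i=0$. If the product is nonpositive this is immediate; otherwise, since the product is positive and $\sum_i a_i=0$, exactly two of the $a_i$ are positive and two are negative, say $a_1,a_2\ge0\ge a_3,a_4$, and with $s:=a_1+a_2=-(a_3+a_4)$ the AM--GM inequality gives $a_1a_2\le s^2/4$ and $a_3a_4\le s^2/4$, while $N=\sum_i a_i^2\ge\tfrac12(a_1+a_2)^2+\tfrac12(a_3+a_4)^2=s^2$. Hence $a_1a_2a_3a_4\le s^4/16\le N^2/16$, which yields $|P_0|^2\le\tfrac{N^2}{6}+\tfrac{N^2}{2}=\tfrac23N^2$ and completes the proof of \eqref{EqHu}.

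For the equality discussion, equality forces both Cauchy--Schwarz equality, i.e. $W^+=c\,P_0$ with $c\ge0$, and equality in the product bound, i.e. the eigenvalues of $\mathring{Ric}$ form the multiset $\{t,t,-t,-t\}$ up to order. In that case $(b_1,b_2,b_3)=(2t^2,-2t^2,-2t^2)$, so $P_0$ has eigenvalues proportional to $(2,-1,-1)$, and therefore $W^+$ has eigenvalues proportional to $(2,-1,-1)$ with a nonnegative factor; relabeling, this is exactly $0\le\lambda_3=-2\lambda_1=-2\lambda_2$. I expect the main obstacle to be the explicit diagonalization yielding the $b_k$ (careful bookkeeping with the Kulkarni--Nomizu product and the normalization of $\Lambda^+$, on which the sharp constant $\tfrac{\sqrt6}{3}$ depends) together with the product inequality $a_1a_2a_3a_4\le\tfrac1{16}\big(\sum_i a_i^2\big)^2$; once these are in place, the pairing estimate is a direct application of Cauchy--Schwarz.
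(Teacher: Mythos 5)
The paper itself contains no proof of Lemma \ref{lemK}: it is recalled verbatim from \cite[Lemma 6]{CRT}, whose treatment (as the introduction here notes) sits inside the curvature-of-the-second-kind formalism and the algebraic identities of \cite{CGT}. So your proposal is measured against that cited source rather than an internal argument, and it holds up: it is correct, and it is a genuinely more elementary, self-contained route. Instead of working in an eigenbasis of $W^+$, you diagonalize $\mathring{Ric}$, with eigenvalues $a_1,\dots,a_4$, and observe that in the induced orthonormal basis of $\Lambda^+$ the operator $(\mathring{Ric}\odot \mathring{Ric})^+$ is diagonal with entries $b_1=a_1a_2+a_3a_4$, $b_2=a_1a_3+a_2a_4$, $b_3=a_1a_4+a_2a_3$. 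I checked this against the paper's conventions: with the Kulkarni--Nomizu product \eqref{eq76} and the standard identification of curvature-type tensors with endomorphisms of $\Lambda^2$ under which $|W^+|^2=\sum_i\lambda_i^2$ (the normalization used in the paper's own proof of \eqref{eqdet}, and under which K\"ahler metrics satisfy $|W^+|=\frac{S}{2\sqrt{6}}$), these are exactly the diagonal entries, so your sharp constant $\frac{\sqrt{6}}{3}$ emerges in the correct normalization; this was the one real danger point, and it is also consistent with how the lemma is consumed right after \eqref{weitzenbock_B_1}, where $\frac{1}{2}\cdot\frac{\sqrt{6}}{3}=\frac{1}{\sqrt{6}}$ produces the coefficient in \eqref{3_lapW+}. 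The remaining steps are all sound: Cauchy--Schwarz against the trace-free part $P_0$ (legitimate since $\mathrm{tr}\,W^+=0$), the symmetric-function identity $|P_0|^2=\frac{N^2}{6}+8a_1a_2a_3a_4$ (which correctly uses $\sum_i a_i=0$), and the elementary bound $a_1a_2a_3a_4\le \frac{N^2}{16}$ with equality exactly for spectra $\{t,t,-t,-t\}$; combining the two equality cases forces $W^+$ to be a nonnegative multiple of $P_0$, which has spectrum proportional to $(2,-1,-1)$, i.e. the stated condition. What your approach buys is a short, purely algebraic verification including sharpness; what the framework of \cite{CGT} and \cite{CRT} buys is additional companion identities needed for the rest of that paper, which your argument does not reproduce (nor needs to).

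One minor remark, inherited from the statement rather than a gap in your argument: the ``if and only if'' is only literally true in the nondegenerate case. Your proof establishes necessity of the spectral condition when $\mathring{Ric}\neq 0$ and $W^+\neq 0$; conversely, the spectral condition on $W^+$ alone does not imply equality (one also needs $\mathring{Ric}$ to have spectrum $\{t,t,-t,-t\}$ and $W^+$ to be aligned with $P_0$), and if $\mathring{Ric}=0$ equality holds for every $W^+$. This looseness is harmless for the paper, which only invokes the inequality and the necessity direction of the equality case (feeding Proposition \ref{kahlerform} in the proof of Theorem \ref{ThmA}), and your write-up is at the same level of precision as the source.
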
 

As a consequence of the equality case in Lemma \ref{lemK} and (\ref{eqdet}), we have the following proposition.

\begin{proposition}[\cite{CRT}]
	\label{kahlerform}
		Let $(M^4,\ g)$ be a $4$-dimensional Riemannian manifold. If $\nabla W^+= \nu \otimes W^+$ for some one-form $\nu$ and $\lambda_2=\lambda_3$ at each point, then $\omega_1$ is a locally K\"ahler form.
\end{proposition}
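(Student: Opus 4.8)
The plan is to diagonalize $W^{+}$, regarded as a symmetric endomorphism of $\Lambda^{+}$, and to transcribe the hypothesis $\nabla W^{+}=\nu\otimes W^{+}$ into a moving eigenframe. Working on the open set where $W^{+}\neq 0$, I would choose a local orthonormal frame $\{\omega_{1},\omega_{2},\omega_{3}\}$ of $\Lambda^{+}$, normalized so that $|\omega_{i}|^{2}=2$, with $W^{+}\omega_{i}=\lambda_{i}\omega_{i}$. Since the splitting $\Lambda^{2}=\Lambda^{+}\oplus\Lambda^{-}$ is parallel, $\nabla_{X}\omega_{i}$ again lies in $\Lambda^{+}$, and because the frame is orthonormal of constant norm its covariant derivative is encoded by a skew-symmetric matrix of connection $1$-forms; in particular $\langle\nabla_{X}\omega_{i},\omega_{i}\rangle=0$ and $\langle\nabla_{X}\omega_{i},\omega_{j}\rangle=-\langle\nabla_{X}\omega_{j},\omega_{i}\rangle$.

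The key computation is to differentiate $W^{+}$ in this frame. Using that $W^{+}$ is self-adjoint, one finds for $i\neq j$ that
\[
\langle(\nabla_{X}W^{+})\omega_{i},\omega_{j}\rangle=(\lambda_{i}-\lambda_{j})\,\langle\nabla_{X}\omega_{i},\omega_{j}\rangle,
\]
while the diagonal entries reproduce the eigenvalue derivatives $X\lambda_{i}$. On the other hand $\nu(X)W^{+}$ is diagonal in this frame. Matching both sides of $\nabla_{X}W^{+}=\nu(X)W^{+}$ therefore forces
\[
(\lambda_{i}-\lambda_{j})\,\langle\nabla_{X}\omega_{i},\omega_{j}\rangle=0\qquad(i\neq j),
\]
so that $\langle\nabla_{X}\omega_{i},\omega_{j}\rangle=0$ whenever $\lambda_{i}\neq\lambda_{j}$. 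The diagonal part yields $d\lambda_{i}=\lambda_{i}\,\nu$, which keeps the eigenvalue ratios constant and guarantees that the simple/double splitting of the spectrum is preserved on connected components, so the eigenframe is genuinely smooth there.

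Now I would feed in the degeneracy. Since $W^{+}$ is trace-free, the coincidence of two eigenvalues together with $\lambda_{1}+\lambda_{2}+\lambda_{3}=0$ makes the remaining eigenvalue simple and equal to $-2$ times the repeated one; this is precisely the coincidence underlying the equality case of \eqref{eqdet}. Let $\omega_{1}$ denote the eigenform of this simple eigenvalue (for a K\"ahler surface it is exactly this eigenform, with eigenvalue $\tfrac{S}{6}$, that is the K\"ahler form). The displayed vanishing gives $\langle\nabla_{X}\omega_{1},\omega_{j}\rangle=0$ for the two indices $j$ belonging to the degenerate eigenvalue, and combined with $\langle\nabla_{X}\omega_{1},\omega_{1}\rangle=0$ this yields $\nabla\omega_{1}=0$. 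Thus the distinguished eigenform is parallel; the two eigenforms spanning the degenerate plane need not be individually parallel, but that is irrelevant.

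To conclude, I would invoke the standard fact that a parallel self-dual $2$-form with $|\omega_{1}|^{2}=2$ defines a parallel, $g$-orthogonal almost complex structure $J$ through $\omega_{1}(\cdot,\cdot)=g(J\cdot,\cdot)$ with $J^{2}=-\mathrm{Id}$; parallelism of $J$ forces its integrability and shows that $g$ is K\"ahler with K\"ahler form $\omega_{1}$. Because the eigenframe, the orientation, and possibly a double cover are defined only locally, this gives that $\omega_{1}$ is a locally K\"ahler form, as asserted. I expect the main obstacles to be (i) the clean extraction of the off-diagonal vanishing from the Kato-type identity in a moving orthonormal eigenframe, and (ii) the regularity of the eigenframe: one must restrict to the open set where $W^{+}\neq 0$ with constant multiplicities, and separately account for the zero locus of $W^{+}$, on which the statement is either vacuous or handled by continuity.
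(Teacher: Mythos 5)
The paper itself gives no proof of this proposition---it is quoted from \cite{CRT}---but your moving-eigenframe argument is essentially the standard Derdzi\'nski-type computation used there, and it is correct: equality in the Kato inequality forces $(\lambda_i-\lambda_j)\langle\nabla_X\omega_i,\omega_j\rangle=0$, so the eigenform of the simple eigenvalue (which is what the statement's ``$\omega_1$'' must mean, as you correctly reinterpret) is parallel and therefore a local K\"ahler form. One refinement: rather than waving at ``continuity'' on the zero locus of $W^+$, observe that $\nabla W^+=\nu\otimes W^+$ gives $d|W^+|^2=2|W^+|^2\,\nu$, so by uniqueness for linear ODEs along curves the zero set of $W^+$ is open and closed; hence on a connected manifold either $W^+\equiv 0$ (and the statement is vacuous) or $W^+$ is nowhere zero, and your eigenframe argument applies without any degeneration of multiplicities.
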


We now recall the concept of {\it modified curvature tensor} $\overline{R},$ introduced by X. Cao and Tran \cite{CH}, for a four-dimensional gradient Ricci soliton $(M^4,\,g,\,f).$ It is defined in \eqref{modR} by 
\begin{equation}
\label{modRaa}
    \overline{R} = R + \frac{1}{2}Hess\,f\odot g.
\end{equation} From this, it follows that

\[
\overline{R} = \begin{pmatrix}
\bar{A}^{+} & O \\
O & \bar{A}^{-}
\end{pmatrix},
\] with $\bar{A}^{\pm}=W^{\pm}+ (1-\frac{S}{6})Id.$ In particular, as observed in \cite[Proposition 2.4]{CH} (see also \cite{Berger1}), the modified curvature tensor $\overline{R}$ admits a normal form given by

\[
\overline{R} = \begin{pmatrix}
A & B \\
B & A
\end{pmatrix},
\] with $A=diag(a_{1},a_{2},a_{3})$ and $B=diag(b_{1},b_{2},b_{3}).$ Moreover, $a_1=\min \overline{K},$ $a_{3}=\max \overline{K}$ and $|b_{i}-b_{j}|\leq |a_{i}-a_{j}|,$ where $\overline{K}$ is the modified sectional curvature, that is, $\overline{K}(e_{1},e_{2})=\overline{R}_{1212}$ for any orthonormal vector $e_{1}$ and $e_{2}.$

By assuming a lower bound on the modified sectional curvature $\overline{K},$  X. Cao and Tran \cite[Lemma 2.5]{CH} derived the following essential result.

\begin{proposition}[\cite{CH}]
Let ($M^4,\,g,\, f$) be a $4$-dimensional gradient shrinking Ricci soliton satisfying \eqref{grs} with $\overline{K} \geq \varepsilon.$ Then the following assertions hold:

\begin{equation}\label{Kbarboundineq}
\begin{aligned}
S+3 \Delta f & \geq 12 \varepsilon , \\
S & \leq 6(1-\varepsilon) , \\
\Delta f & \geq 2(3 \varepsilon-1) , \\
\frac{1}{\sqrt{6}}\left(\left|W^{+}\right|+\left|W^{-}\right|\right) & \leq 2(1-\varepsilon) -\frac{S}{3} .
\end{aligned}
\end{equation} 
\end{proposition}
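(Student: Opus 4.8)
The plan is to split the four inequalities into two groups: the first three are elementary consequences of the trace identity and a single summation of sectional curvatures, while the fourth estimate on $|W^+|+|W^-|$ carries the genuine geometric content and is the main obstacle.

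First I would fix a point $p\in M^4$ and an orthonormal basis $\{e_i\}_{i=1}^4$ of $T_pM$, and sum the modified sectional curvatures over the six coordinate $2$-planes. Writing $f_{ab}=(Hess\,f)_{ab}$, the definition \eqref{modRaa} and the Kulkarni-Nomizu formula \eqref{eq76} give $\overline{R}_{ijij}=R_{ijij}+\tfrac12(f_{ii}+f_{jj})$ for $i\neq j$. Summing over $i<j$ and using $\sum_{i<j}R_{ijij}=\tfrac{S}{2}$ together with $\sum_{i<j}(f_{ii}+f_{jj})=3\,\Delta f$, I obtain $\sum_{i<j}\overline{K}(e_i,e_j)=\tfrac{S}{2}+\tfrac32\Delta f$. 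Since $\overline{K}\geq\varepsilon$ on every plane and there are six of them, this yields $\tfrac{S}{2}+\tfrac32\Delta f\geq 6\varepsilon$, i.e. the first inequality $S+3\Delta f\geq 12\varepsilon$. The second and third inequalities then follow purely algebraically from the trace identity \eqref{traceGRS}: substituting $\Delta f=4-S$ into $S+3\Delta f\geq 12\varepsilon$ gives $12-2S\geq 12\varepsilon$, hence $S\leq 6(1-\varepsilon)$, and feeding this back into $\Delta f=4-S$ produces $\Delta f\geq 4-6(1-\varepsilon)=2(3\varepsilon-1)$.

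For the fourth inequality, the key structural fact is that $\overline{R}$ is \emph{block-diagonal} with respect to $\Lambda^+\oplus\Lambda^-$. Indeed, substituting $Hess\,f=g-Ric$ into \eqref{modRaa} and using the Weyl decomposition \eqref{weyl} in dimension four, the traceless-Ricci contributions cancel and one finds $\overline{R}=W+\left(\tfrac12-\tfrac{S}{12}\right)g\odot g$, which acts as $\bar A^{\pm}=W^{\pm}+(1-\tfrac{S}{6})Id$ on $\Lambda^{\pm}$ with vanishing mixed block. Hence, for a $2$-plane spanned by orthonormal $u,v$, writing the unit decomposable bivector $u\wedge v=\omega^++\omega^-$ with $\omega^{\pm}\in\Lambda^{\pm}$, one has $\overline{K}(u,v)=\langle \bar A^+\omega^+,\omega^+\rangle+\langle\bar A^-\omega^-,\omega^-\rangle$. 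Since a unit bivector is decomposable exactly when $|\omega^+|=|\omega^-|=\tfrac{1}{\sqrt2}$, and conversely every such pair assembles into a decomposable unit bivector, I may minimize over $\omega^+$ and $\omega^-$ \emph{independently}; this gives $\min\overline{K}=\tfrac12(\lambda_3+\mu_3)+1-\tfrac{S}{6}$, where $\lambda_3$ and $\mu_3$ are the least eigenvalues of $W^+$ and $W^-$. The hypothesis $\overline{K}\geq\varepsilon$ thus reads $\lambda_3+\mu_3\geq 2\varepsilon-2+\tfrac{S}{3}$.

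Finally I would close the argument with the algebraic inequality $-\lambda_3\geq\tfrac{1}{\sqrt6}|W^+|$ and its analogue for $\mu_3$. This follows from $\lambda_1+\lambda_2+\lambda_3=0$ with $\lambda_1\geq\lambda_2\geq\lambda_3$: setting $s=-\lambda_3\geq 0$, a one-variable optimization of $\lambda_1^2+\lambda_2^2=(s-\lambda_2)^2+\lambda_2^2$ over $\lambda_2\in[-s,\tfrac{s}{2}]$ shows $\lambda_1^2+\lambda_2^2\leq 5\lambda_3^2$, whence $|W^+|^2=\lambda_1^2+\lambda_2^2+\lambda_3^2\leq 6\lambda_3^2$ (equality precisely when $\lambda_2=\lambda_3=-\tfrac12\lambda_1$). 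Therefore $\lambda_3+\mu_3\leq-\tfrac{1}{\sqrt6}(|W^+|+|W^-|)$, and combining with the lower bound from the previous step gives $\tfrac{1}{\sqrt6}(|W^+|+|W^-|)\leq 2(1-\varepsilon)-\tfrac{S}{3}$, as required. I expect the only genuinely delicate point to be the independent minimization over $\omega^+$ and $\omega^-$, which hinges on the characterization of decomposable bivectors through the equality $|\omega^+|=|\omega^-|$; everything else is bookkeeping once the cancellation making $\overline{R}$ block-diagonal is in place.
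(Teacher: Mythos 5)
Your proposal is correct and is essentially the argument behind the cited result: the paper itself gives no proof of this proposition (it quotes Lemma 2.5 of \cite{CH}), but the block-diagonal structure you derive --- the $\mathring{Ric}$-blocks of $R$ and of $\tfrac12 Hess\,f\odot g$ cancelling so that $\overline{R}$ acts as $\bar A^{\pm}=W^{\pm}+\left(1-\tfrac{S}{6}\right)\mathrm{Id}$ on $\Lambda^{\pm}$ --- is exactly the normal form the paper records from \cite{CH} immediately before the statement. Your remaining steps (the independent minimization over $|\omega^{+}|=|\omega^{-}|=1/\sqrt{2}$ via the decomposability criterion, and the algebraic bound $|W^{\pm}|\le \sqrt{6}\,|\lambda_3|$, sharp exactly in the K\"ahler eigenvalue configuration $\lambda_2=\lambda_3=-\tfrac12\lambda_1$) reproduce the standard Berger-type reasoning of that source, and all constants check out, e.g.\ equality throughout for $\mathbb{S}^2\times\mathbb{R}^2$ with $\varepsilon=\tfrac12$, $S=2$.
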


Finally, we need to recall two fundamental gap theorems that will play a crucial role in the proofs of Theorems \ref{ThmB} and \ref{thmNew}. The first is the classical gap theorem for Einstein manifolds, established by Gursky and LeBrun \cite{LeBrun}.

\begin{theorem}[\cite{LeBrun}]
\label{Gursky_Gap}
Let $(M^4,\, g)$ be a compact oriented $4$-dimensional Einstein manifold with positive scalar curvature $S$ and $W^{+} \not \equiv 0.$ Then we have:

$$ \int_M\left|W^{+}\right|^2\, dV_g \geq \int_M \frac{S^2}{24}\, dV_g,
$$ with equality if and only if $\nabla W^{+} \equiv 0$.
\end{theorem}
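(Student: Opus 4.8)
The plan is to reduce everything to the classical Weitzenb\"ock formula for the self-dual Weyl tensor of an Einstein four-manifold, which is precisely the $\mathring{Ric}\equiv 0$ specialization of Proposition~\ref{propWeitz}, with the drift Laplacian replaced by the ordinary Laplacian since an Einstein metric has $f$ constant and $S$ constant. Writing $u=|W^{+}|$, this reads
\[
\Delta u^{2} = 2|\nabla W^{+}|^{2} + S\,u^{2} - 36\det W^{+}.
\]
The first step is to extract its pointwise consequences. Expanding $\Delta u^{2}=2u\Delta u+2|\nabla u|^{2}$ and invoking the Kato inequality (Lemma~\ref{Katoinequality}) together with the algebraic bound $\det W^{+}\le \tfrac{\sqrt 6}{18}u^{3}$ from \eqref{eqdet}, one obtains on the open set $\{u>0\}$ the differential inequality $\Delta u \ge \tfrac{S}{2}u-\sqrt 6\,u^{2}$. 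Already a maximum-principle argument at a point where $u$ attains its positive maximum (which exists because $W^{+}\not\equiv 0$) yields the pointwise bound $\max|W^{+}|^{2}\ge \tfrac{S^{2}}{24}$, but this is weaker than the desired integral gap.

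The decisive refinement comes from the second Bianchi identity: on an Einstein manifold $\delta W^{+}=0$, so $W^{+}$ satisfies a twistor-type first-order system and hence obeys the \emph{refined} Kato inequality $|\nabla W^{+}|^{2}\ge \tfrac{5}{3}|\nabla u|^{2}$. Feeding this into the Weitzenb\"ock identity upgrades the pointwise estimate to $u\Delta u \ge \tfrac{2}{3}|\nabla u|^{2}+\tfrac{S}{2}u^{2}-\sqrt 6\,u^{3}$ on $\{u>0\}$. Testing this inequality against the weights $u^{a}$ and integrating by parts (the boundary terms vanish on the compact manifold, and the gradient contributions have a favorable sign for $a\ge -\tfrac{5}{3}$) produces the family of moment inequalities $\sqrt 6\int_{M}u^{q+1}\,dV_g \ge \tfrac{S}{2}\int_{M}u^{q}\,dV_g$ for all $q\ge \tfrac13$. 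The case $q=1$ gives $\int_M u^2 \ge \tfrac{S}{2\sqrt6}\int_M u$, so the gap would follow at once from the borderline lower bound $\int_M u \ge \tfrac{S}{2\sqrt6}\,\mathrm{Vol}(M)$.

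The main obstacle is exactly obtaining that last, scale-critical $L^{1}$ estimate: the endpoint $q=0$ is outside the admissible range, and the naive integration of the Weitzenb\"ock formula controls $\int_M Su^2$ from \emph{above} by $\int_M 36\det W^+$, the wrong direction for a lower gap. This is where the genuinely analytic input of Gursky--LeBrun enters, via the conformally modified scalar curvature $S-\sqrt6\,|W^{+}|$ and the associated perturbed Yamabe/eigenvalue problem built from $\delta W^{+}=0$; I would follow that route to close the argument with the sharp constant $\tfrac{1}{24}$. Finally, for the equality discussion one traces back through the chain: equality forces both equality in Kato, i.e. $\nabla W^{+}=\nu\otimes W^{+}$ for some one-form $\nu$, and equality in \eqref{eqdet}, i.e. $\lambda_{2}=\lambda_{3}$; the vanishing of the refined gradient term then forces $\nu\equiv 0$, so $\nabla W^{+}\equiv 0$. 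By Proposition~\ref{kahlerform} the metric is then locally K\"ahler--Einstein, for which Derdzi\'nski's identity \cite{derd1} gives $|W^{+}|^{2}=\tfrac{S^{2}}{24}$, precisely realizing equality.
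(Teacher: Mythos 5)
The paper itself does not prove this statement: Theorem~\ref{Gursky_Gap} is quoted directly from Gursky--LeBrun \cite{LeBrun} as a known black-box ingredient, so your attempt has to be measured against the original argument, not against anything in this article.

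Your first half is correct and is indeed how the story begins: on an Einstein $4$-manifold the Weitzenb\"ock formula reads $\Delta|W^+|^2=2|\nabla W^+|^2+S|W^+|^2-36\det W^+$, the bound \eqref{eqdet} and the refined Kato inequality $|\nabla W^+|^2\ge\tfrac53|\nabla|W^+||^2$ (valid since $\delta W^+=0$) are both legitimate, and your integration by parts against powers $u^a$ with $u=|W^+|$ does yield $\sqrt6\int_M u^{q+1}\,dV_g\ge\tfrac S2\int_M u^{q}\,dV_g$ for $q\ge\tfrac13$. But the proposal then stops exactly at the theorem's actual content. The moment inequalities you derived are \emph{provably insufficient}: a profile with $u$ slightly larger than $\tfrac{S}{2\sqrt6}$ on a set of tiny volume and (nearly) zero elsewhere satisfies every one of them for all $q\ge\tfrac13$, yet makes $\int_M u^2\,dV_g$ arbitrarily small compared to $\tfrac{S^2}{24}\mathrm{Vol}(M,g)$. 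So no manipulation of these inequalities alone can produce the scale-critical endpoint $q=0$ bound $\int_M u\,dV_g\ge\tfrac{S}{2\sqrt6}\mathrm{Vol}(M,g)$; one needs an input that rules out concentration. That input --- Gursky's conformal rescaling argument, i.e.\ the analysis of the modified operator/Yamabe-type functional attached to $S-\sqrt6\,|W^+|$, using $\delta W^+=0$ and the conformal invariance of $\int|W^+|^2$ --- is precisely what you defer to with ``I would follow that route.'' Deferring it is deferring the theorem: everything before it is elementary, and everything after it (including your equality discussion, which hinges on tracing equality through an inequality chain you never completed) dangles from the missing step. As written, the proposal is a correct reduction of the Gursky--LeBrun gap theorem to its hardest lemma, not a proof of it.
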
 

We note that equality in Theorem~\ref{Gursky_Gap} is achieved both by $\mathbb{CP}^2,$ equipped with the Fubini--Study metric and its standard orientation, and $\mathbb{S}^2 \times \mathbb{S}^2$ with the product metric.

The second gap theorem was obtained by Catino \cite{catinoAdv} in the context of compact gradient shrinking Ricci solitons. This result can be compared to the sphere theorem of Chang, Gursky, and Yang \cite[Theorem A']{ChangGurskyYang}, which provides a rigidity characterization under a pinching condition involving the Weyl tensor and scalar curvature on four-dimensional compact manifolds.

\begin{theorem}[\cite{catinoAdv}]
\label{Catino_Pinching}
Any $4$-dimensional compact gradient shrinking Ricci soliton ($M^4,\,g,\, f)$ satisfying the integral pinching condition

$$
\int_M|W|^2 d V_g+\frac{5}{4} \int_M|\overset{\circ}{Ric}|^2 d V_g \leq \frac{1}{48} \int_M S^2 d V_g
$$ is isometric to a quotient of the round $\mathbb{S}^4$.
\end{theorem}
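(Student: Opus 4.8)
The plan is to show that the pinching condition forces both $W\equiv 0$ and $\mathring{Ric}\equiv 0$. Once this is established, the soliton is simultaneously Einstein (since $\mathring{Ric}=0$) and conformally flat (since $W=0$), hence of constant sectional curvature; being a compact shrinking soliton with $S>0$, it is then a finite quotient of the round $\mathbb{S}^4$. The whole argument thus reduces to an integrated Bochner estimate in which the quadratic curvature terms are forced to dominate the cubic ones, the smallness needed for this domination being supplied precisely by the pinching hypothesis.

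First I would assemble a single differential identity for the combined defect $u := |W|^2 + \beta\,|\mathring{Ric}|^2$, where $\beta>0$ is to be fixed. Summing the Weitzenb\"ock formula \eqref{weitzenbock} for $W^+$ with its anti-self-dual counterpart gives
$$\Delta_f|W|^2 = 2|\nabla W|^2 + 4|W|^2 - 36\bigl(\det W^+ + \det W^-\bigr) - \langle \mathring{Ric}\odot\mathring{Ric},\,W\rangle,$$
and the algebraic bounds \eqref{eqdet} and Lemma~\ref{lemK} convert the last two terms into cubic defects, yielding
$$\Delta_f|W|^2 \ge 2|\nabla W|^2 + 4|W|^2 - 2\sqrt6\,|W|^3 - c_0\,|\mathring{Ric}|^2|W|$$
for an explicit $c_0$. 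In parallel, from the soliton identities in Lemma~\ref{GRSiden}(2),(4) together with $|\mathring{Ric}|^2 = |Ric|^2 - \tfrac{S^2}{4}$ I would derive a Bochner formula of the shape
$$\Delta_f|\mathring{Ric}|^2 = 2|\nabla\mathring{Ric}|^2 + 4|\mathring{Ric}|^2 + Q,$$
where, after inserting the curvature decomposition \eqref{weyl}, the zeroth-order remainder $Q$ consists of the mixed term $-2W_{ikjl}\mathring{R}_{ij}\mathring{R}_{kl}$ (controlled by $|W||\mathring{Ric}|^2$), the cubic $\operatorname{tr}(\mathring{Ric}^3)$ (controlled by $\tfrac{1}{\sqrt3}|\mathring{Ric}|^3$), and curvature--scalar terms proportional to $S|\mathring{Ric}|^2$.

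Next I would integrate $\Delta_f u$ over $M$ against the unweighted measure $dV_g$. The key point is that $\int_M\Delta_f u\,dV_g = \int_M u\,\Delta f\,dV_g = \int_M u\,(4-S)\,dV_g$ by the trace equation \eqref{traceGRS}, so the ambient constant cancels the $4|W|^2 + 4\beta|\mathring{Ric}|^2$ produced by the Weitzenb\"ock formulas and leaves the term $\int_M S\,u\,dV_g$ (into which the scalar--Ricci contributions from $Q$ also merge) on the favorable side. After applying the Kato inequality (Lemma~\ref{Katoinequality}) to retain $\int|\nabla|W||^2$ and $\int|\nabla|\mathring{Ric}||^2$, the outcome is an inequality of the form
$$\int_M\!\left(2|\nabla|W||^2 + 2\beta|\nabla|\mathring{Ric}||^2 + S\,u\right)dV_g \le \int_M\!\left(2\sqrt6\,|W|^3 + c_1\,|W||\mathring{Ric}|^2 + c_2\,|\mathring{Ric}|^3\right)dV_g,$$
with the troublesome terms all cubic in the curvature defect.

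Finally I would absorb the cubic terms on the right using H\"older together with the sharp four-dimensional Yamabe--Sobolev inequality ($W^{1,2}\hookrightarrow L^4$), whose constant is controlled by the Yamabe invariant and hence, via the Gauss--Bonnet formula \eqref{6_HTest3}, by $\int_M S^2\,dV_g$. Writing each cubic as, e.g., $\int|W|^3 \le \|W\|_{2}\,\|W\|_{4}^2$ and estimating $\|W\|_4^2$ by the gradient plus the $\int_M S|W|^2$ term, the pinching hypothesis is exactly what guarantees that the resulting coefficient is $<1$, so that the cubic terms can be absorbed into the left-hand side; this forces $\nabla W = \nabla\mathring{Ric} = 0$ and $W = \mathring{Ric} = 0$. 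The main obstacle is precisely this last step: one must identify the \emph{sharp} Sobolev/Yamabe constant and arrange the free parameter $\beta$ against the algebraic sharp constants from \eqref{eqdet} and Lemma~\ref{lemK} (and the cubic Ricci bound $|\operatorname{tr}\mathring{Ric}^3|\le \tfrac{1}{\sqrt3}|\mathring{Ric}|^3$) so that the absorption threshold is exactly $\int_M|W|^2 + \tfrac54\int_M|\mathring{Ric}|^2 \le \tfrac{1}{48}\int_M S^2$. Matching these constants --- rather than merely obtaining some qualitative gap --- is the delicate heart of the proof.
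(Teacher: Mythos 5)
This statement is not proved in the paper at all: it is Catino's theorem, quoted from \cite{catinoAdv} and used as a black box in the proof of Theorem \ref{ThmB}. So your proposal must be measured against Catino's published argument. Your toolbox is the right one, and for the traceless-Ricci half of the problem it is essentially what Catino does: integrated Bochner/Weitzenb\"ock identities, Kato, H\"older, and the Yamabe--Sobolev inequality $Y(M,[g])\,\|u\|_{L^4}^2 \le \int_M\bigl(6|\nabla u|^2 + S u^2\bigr)dV_g$. One point you leave dangerously vague is the \emph{direction} of the link between the pinching and the Sobolev constant: the useful inequality is not $Y(M,[g])^2 \le \int S^2$ (true by Cauchy--Schwarz, but useless here), rather the reverse-type bound $\int_M S^2 - 12\int_M|\mathring{Ric}|^2 = 24\int_M\bigl(\tfrac{S^2}{24}-\tfrac12|\mathring{Ric}|^2\bigr) \le Y(M,[g])^2$, which follows from conformal invariance of $8\pi^2\chi(M)-\int|W|^2$ (i.e.\ \eqref{6_HTest3}) evaluated at the Yamabe minimizer; only with this does the hypothesis become $\int|W|^2+\int|\mathring{Ric}|^2 \le \tfrac{1}{48}Y^2$, which is what absorption needs.

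The genuine gap is the step you yourself call the delicate heart and then simply assert: the one-shot absorption of the \emph{Weyl} cubic term cannot work at the threshold $\tfrac{1}{48}$. Run the numbers. Integrating \eqref{weitzenbock} against $dV_g$ and using \eqref{traceGRS} gives $\int\bigl(2|\nabla W^+|^2+S|W^+|^2\bigr) = \int\bigl(36\det W^+ + \langle(\mathring{Ric}\odot\mathring{Ric})^+,W^+\rangle\bigr)$; by \eqref{eqdet}, H\"older, Kato and Sobolev, $36\int\det W^+ \le 2\sqrt6\int|W^+|^3 \le \tfrac{2\sqrt6\|W^+\|_2}{Y}\int\bigl(6|\nabla|W^+||^2+S|W^+|^2\bigr)$. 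The left side carries only the coefficient $2$ on the gradient while Sobolev produces $6$, so absorption requires $\|W^+\|_2 \le Y/(6\sqrt6)$, i.e.\ $\int|W^+|^2 \le \tfrac{1}{216}Y^2$; the pinching only gives $\tfrac{1}{48}Y^2$, so your ``coefficient $<1$'' is in fact $2\sqrt6\cdot 3/\sqrt{48} = \tfrac{3\sqrt2}{2}\approx 2.1$ (and still $\approx 1.27$ even if one invokes the improved Kato inequality $|\nabla W^+|^2\ge \tfrac53|\nabla|W^+||^2$, which is anyway unavailable since $\delta W^+\neq 0$ here). No choice of $\beta$ repairs this, because the obstruction sits entirely in the $W$-sector. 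This is exactly why Catino's proof is two-step rather than simultaneous: the Sobolev absorption is run only on the traceless-Ricci identity $\int|\nabla\mathring{Ric}|^2+\tfrac16\int S|\mathring{Ric}|^2 = \int\bigl(2W_{ikjl}\mathring{R}_{ij}\mathring{R}_{kl}-2\operatorname{tr}\mathring{Ric}^3\bigr)$, whose cubic terms are quadratic in $\mathring{Ric}$ and only \emph{linear} in $W$ (this is where the weight $\tfrac54$ earns its keep), yielding that the soliton is Einstein; then, for the Einstein metric, the pinching reads $\int|W^\pm|^2 \le \tfrac{1}{48}\int S^2 < \tfrac{1}{24}\int S^2$, and the Gursky--LeBrun gap theorem (Theorem \ref{Gursky_Gap}), applied for both orientations, forces $W\equiv 0$, whence constant positive curvature and the conclusion. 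That Gursky--LeBrun input --- proved by conformal rescaling of a modified Yamabe functional, not by Weitzenb\"ock absorption --- is the missing idea your outline cannot do without.
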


\section{Weyl-scalar curvature pinching}
\label{Sec3}

In this section, we present the proof of Theorem \ref{ThmA}. To that end, we first establish a key preliminary result.

\begin{proposition}
    Let $(M^4,\,g,\,f)$ be a gradient shrinking Ricci soliton satisfying \eqref{grs}. Then we have: 
    \begin{equation}
    \label{3_Weizenbock_noncompact}
    |W^+|\Delta_f\left(|W^+| - \frac{S}{2 \sqrt{6}} \right) \geq  |W^+| \left(|W^+|- \frac{S}{2 \sqrt{6}} \right)\left(2 - \sqrt{6}|W^+|- \frac{S}{2 }\right) .
\end{equation}
\end{proposition}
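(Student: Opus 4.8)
The plan is to rewrite $|W^+|\Delta_f|W^+|$ using the Weitzenb\"ock-type formula \eqref{weitzenbock_B} and to pair it with the evolution of $S$. First I would invoke the elementary identity $\Delta_f|W^+|^2 = 2|W^+|\Delta_f|W^+| + 2|\nabla|W^+||^2$ and substitute \eqref{weitzenbock_B} to obtain
\[
|W^+|\Delta_f|W^+| = \left(|\nabla W^+|^2 - |\nabla|W^+||^2\right) + 2|W^+|^2 - 18\det W^+ - \tfrac{1}{2}\langle(\mathring{Ric}\odot\mathring{Ric})^+, W^+\rangle.
\]
The Kato inequality (Lemma~\ref{Katoinequality}) discards the first, nonnegative term, while the sharp algebraic bounds \eqref{eqdet} and \eqref{EqHu} control the remaining ones, yielding
\[
|W^+|\Delta_f|W^+| \geq 2|W^+|^2 - \sqrt{6}\,|W^+|^3 - \tfrac{\sqrt{6}}{6}\,|\mathring{Ric}|^2|W^+|.
\]

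Next I would compute the contribution of the scalar-curvature term. Using \eqref{trRic_RiclapS}, namely $\Delta_f S = 2S - \tfrac{S^2}{2} - 2|\mathring{Ric}|^2$, gives
\[
|W^+|\,\Delta_f\!\left(\tfrac{S}{2\sqrt{6}}\right) = \frac{|W^+|}{2\sqrt{6}}\left(2S - \frac{S^2}{2} - 2|\mathring{Ric}|^2\right).
\]
Subtracting this from the previous lower bound is where the decisive cancellation occurs: the coefficient $\tfrac{2}{2\sqrt{6}} = \tfrac{\sqrt{6}}{6}$ makes the two $|\mathring{Ric}|^2|W^+|$ terms cancel exactly. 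This is precisely the point of subtracting the Derdzi\'nski/K\"ahler value $\tfrac{S}{2\sqrt{6}}$ of $|W^+|$: it is chosen so that the undetermined traceless-Ricci contribution drops out, leaving an inequality in $|W^+|$ and $S$ alone.

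After the cancellation I expect to reach
\[
|W^+|\,\Delta_f\!\left(|W^+| - \tfrac{S}{2\sqrt{6}}\right) \geq 2|W^+|^2 - \sqrt{6}\,|W^+|^3 - \frac{S|W^+|}{\sqrt{6}} + \frac{S^2|W^+|}{4\sqrt{6}}.
\]
The final step is a purely algebraic verification that the right-hand side factors as claimed. Setting $v = \tfrac{S}{2\sqrt{6}}$, so that $\tfrac{S}{2} = \sqrt{6}\,v$, $\tfrac{S}{\sqrt 6} = 2v$, and $\tfrac{S^2}{4\sqrt 6} = \sqrt 6\, v^2$, the right-hand side becomes $|W^+|\big(2|W^+|^2 - 2|W^+|v - \sqrt{6}|W^+|^3 + \sqrt{6}|W^+|v^2\big)$, which is exactly $|W^+|\,(|W^+| - v)\,\bigl(2 - \sqrt{6}|W^+| - \sqrt{6}\,v\bigr)$; expanding both expressions confirms the identity and gives \eqref{3_Weizenbock_noncompact}. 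The only genuine subtlety is the cancellation of the $|\mathring{Ric}|^2$ terms; once that is secured, the factorization is routine, and the sharp inputs \eqref{eqdet} and \eqref{EqHu} guarantee the stated inequality.
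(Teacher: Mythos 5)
Your proof is correct and takes essentially the same route as the paper: the Weitzenb\"ock formula combined with the sharp bounds \eqref{eqdet} and \eqref{EqHu}, Kato's inequality via $\tfrac12\Delta_f|W^+|^2=|W^+|\Delta_f|W^+|+|\nabla|W^+||^2$, the identity \eqref{trRic_RiclapS} producing the exact cancellation of the $|\mathring{Ric}|^2|W^+|$ terms, and the final factorization (the paper merely substitutes $\Delta_f S$ before applying Kato rather than after). The only blemish is a transcription slip in your penultimate display, where each term inside the parentheses carries a spurious extra factor of $|W^+|$; the factored form $|W^+|\left(|W^+|-v\right)\left(2-\sqrt{6}|W^+|-\sqrt{6}\,v\right)$ you then state is nevertheless exactly the correct right-hand side.
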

\begin{proof}

To begin, we invoke Proposition \ref{propWeitz}:

\begin{equation}
\label{weitzenbock_B_1}
\Delta_{f} |W^{+}|^{2}=2|\nabla W^{+}|^{2}+4|W^{+}|^{2}-36\, \det W^{+}-\langle (\mathring{Ric}\odot \mathring{Ric})^{+},W^{+}\rangle,
\end{equation} together with (\ref{eqdet}) and Lemma \ref{lemK} to infer 
\begin{equation}\label{3_lapW+}
    \frac{1}{2}\Delta_f\left|W^{+}\right|^2 \geq  \left|\nabla W^{+}\right|^2 +2\left|W^{+}\right|^2-\sqrt{6}| W^{+}|^3-\frac{1}{\sqrt{6}}|\overset{\circ}{Ric}|^2|W^+|.
\end{equation} This, combined with \eqref{trRic_RiclapS}, yields
\begin{eqnarray}\label{3_2PWineq}
    \frac{1}{2}\Delta_f\left|W^{+}\right|^2 &\geq &  \left|\nabla W^{+}\right|^2 +2\left|W^{+}\right|^2-\sqrt{6}| W^{+}|^3-\frac{S}{\sqrt{6}}|W^+| \nonumber\\&&+ \frac{S^2}{4 \sqrt{6}}|W^+| + \frac{1}{2 \sqrt{6}}(\Delta_f S)|W^+|.
\end{eqnarray} By Kato's inequality 
\begin{eqnarray*}
    \left| W^{+}\right| \Delta_f\left| W^{+}\right| &\geq & 2\left| W^{+}\right|^2-\sqrt{6}| W^{+}|^3-\frac{S}{\sqrt{6}}|W^+| \nonumber\\&&+ \frac{S^2}{4 \sqrt{6}}|W^+| + \frac{1}{2 \sqrt{6}}(\Delta_f S)|W^+|,
\end{eqnarray*} where we have used that the left-hand side of \eqref{3_2PWineq} is 
\begin{equation*}
\frac{1}{2}\Delta_f\left|W^{+}\right|^2=\left|W^{+}\right| \Delta_f\left|W^{+}\right|+|\nabla| W^{+}| |^2.
\end{equation*} Rearranging terms, one obtains that
\begin{equation*}
\begin{aligned}
    |W^+|\Delta_f\left(|W^+| - \frac{S}{2 \sqrt{6}} \right) &\geq  2|W^+| \left( |W^+| - \frac{S}{2 \sqrt{6}}\right) - \sqrt{6}|W^+| \left(|W^+|^2-\frac{S^2}{24}\right)\\
    &= |W^+| \left(|W^+|- \frac{S}{2 \sqrt{6}} \right)\left(2 - \sqrt{6}|W^+|- \frac{S}{2}\right),
\end{aligned}
\end{equation*} as asserted. 
\end{proof}

We are now ready to present the proof of Theorem \ref{ThmA}.

\begin{proof}[{\bf Proof of Theorem \ref{ThmA}}]

We first set $$\Phi:= |W^+|-\frac{S}{2\sqrt{6}}.$$
Note that our assumption guarantees that the left-hand side of \eqref{3_Weizenbock_noncompact} is non-negative and hence, $\Delta_f \Phi$ is also non-negative. For the compact case, it suffices to use the maximum principle to conclude that \begin{equation}\label{5_WSequal}
    |W^+| = \frac{S}{2\sqrt{6}}+C,
\end{equation} where $C$ is a constant.

On the other hand, if $M^4$ is noncompact, we consider a cut-off function $\rho: M \rightarrow \mathbb{R}$ such that $\rho=1$ on a geodesic ball $B_p(r)$ centered at a fixed point $p \in M$ of radius $r, \rho=0$ outside of $B_p(2 r)$ and $|\nabla \rho| \leq \frac{c}{r}$, where $c$ is a constant. Integrating by parts, one sees that
$$
\begin{aligned}
    0  &\geq - \int_M\rho^2 \Phi (\Delta_f\Phi) e^{-f}dV_g \\
    & =\int_M\left\langle\nabla\left(\rho^2\Phi\right), \nabla \Phi \right\rangle e^{-f} d V_g\\
    & = \int_M\left|\nabla\left(\rho \Phi\right)\right|^2 e^{-f} d V_g - \int_M \Phi^2|\nabla \rho|^2 e^{-f} d V_g,
\end{aligned}
$$
so that,
$$
    \int_M\left|\nabla\left(\rho \Phi\right)\right|^2 e^{-f} d V_g \leq \int_M \Phi^2|\nabla \rho|^2 e^{-f} d V_g.
$$ From this, it follows that

\begin{eqnarray}
\label{5_radial_est}
    \int_{B(r)}\left|\nabla \Phi\right|^2 e^{-f} d V_g &\leq & \int_M\left|\nabla\left(\rho\Phi\right)\right|^2 e^{-f} d V_g \nonumber\\
    &\leq & \int_M\Phi^2|\nabla \rho|^2 e^{-f} d V_g \nonumber \\
    &\leq & \int_{M \backslash B(2 r)}\Phi^2|\nabla \rho|^2 e^{-f} d V_g+\int_{B(2 r) \backslash B(r)}\Phi^2|\nabla \rho|^2 e^{-f} d V_g\nonumber\\&& +\int_{B(r)}\Phi^2|\nabla \rho|^2 e^{-f} d V_g\nonumber \\
    &\leq & \int_{B(2 r) \backslash B(r)}\Phi^2|\nabla \rho|^2 e^{-f} d V_g \nonumber \\
    &\leq & \frac{c^2}{r^2} \int_M\Phi^2 e^{-f} d V_g.
\end{eqnarray}

We now need to show that $\Phi$ is $L^2_f$-integrable. Indeed, using the upper bound in \eqref{Thm3A1_alt}, one obtains that

$$
\begin{aligned}
    \Phi^2 = \left(|W^+|-\frac{S}{2\sqrt{6}} \right)^2 & \leq \frac{1}{6}\left( 2 - \frac{S}{2}\right)^2  + \frac{S^2}{24} \\
    & \leq \frac{2}{3} + \frac{S^2}{12} \\
    & \leq \frac{2}{3} + \frac{1}{3} |Ric|^2,
\end{aligned}
$$ where we have used that $S\geq 0.$ Consequently,

\begin{eqnarray}
    \int_{M} \Phi^2 e^{-f}dV_g\leq \frac{2}{3}\int_{M}e^{-f}dV_g + \frac{1}{3}\int_{M}|Ric|^2 e^{-f}dV_g.
\end{eqnarray} Since $M^4$ has finite weighted volume \cite[Corollary 1.1]{CZ} and $|\mathring{Ric}|$ is $L_f^2$-integrable \cite{MS}, one sees that $\Phi$ is also $L_f^2$-integrable, namely, 
$$\int_M \Phi^2 e^{-f} d V_g <\infty.
$$

Returning to (\ref{5_radial_est}), by taking limit as $r \rightarrow \infty,$ one concludes that $\Phi$ is constant, i.e., (\ref{5_WSequal}) holds.

Finally, in both the compact and noncompact cases, substituting equation (\ref{5_WSequal}) into Proposition \ref{propWeitz} yields the equality cases in (\ref{eqdet}) and Lemma \ref{lemK}. Therefore, we apply Proposition \ref{kahlerform} to conclude that $M^4$ is locally a K\"ahler-Ricci soliton. So, the proof is completed. 
\end{proof}

 \section{Lower bounds on the modified sectional curvature}
 \label{Sec4}
 
In this section, we present the proofs of Theorems \ref{ThmB}, \ref{thmNew}, \ref{ThmC} and \ref{Thm_HT}.

\subsection{Characterization under lower bounds}
We begin by proving Theorem~\ref{ThmB}. For the reader’s convenience, we restate it here.

\begin{theorem}[Theorem \ref{ThmB}]
\label{ThmB_1}
Let $(M^4,\,g,\,f)$ be an oriented $4$-dimensional compact gradient shrinking Ricci soliton satisfying \eqref{grs}. Suppose that 
$$\overline{K} \geq \varepsilon := 1- \sqrt{\frac{268}{567}} \,(\approx 0.312)\,\,\hbox{and}\,\,S\geq \delta := \frac{360}{67}(1-\varepsilon) = \frac{360}{67}\sqrt{\frac{268}{567}}\, (\approx 3.694).$$ Then $M^4$ is isometric to either the standard sphere $\mathbb{S}^4$, or the complex projective space $\mathbb{C} \mathbb{P}^2.$ 
\end{theorem}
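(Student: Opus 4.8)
The plan is to verify the integral pinching hypothesis of Catino's gap theorem (Theorem~\ref{Catino_Pinching}): once
\[
\int_M|W|^2\,dV_g+\frac54\int_M|\mathring{Ric}|^2\,dV_g\le\frac1{48}\int_M S^2\,dV_g
\]
is established, the soliton must be a quotient of the round $\mathbb{S}^4$. The first — and crucial — step is to integrate the Weitzenb\"ock formula \eqref{weitzenbock_B} over the compact manifold \emph{without} the weight $e^{-f}$. Using $\int_M\Delta_f u\,dV_g=\int_M u\,\Delta f\,dV_g$ together with the trace identity $\Delta f=4-S$ from \eqref{traceGRS}, integrating $\Delta_f|W^+|^2$ produces $\int_M|W^+|^2(4-S)\,dV_g$ on the left-hand side; this is precisely what keeps the whole estimate unweighted and therefore compatible with the (unweighted) pinching required by Theorem~\ref{Catino_Pinching}.

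After rearranging and discarding the nonnegative term $2|\nabla W^+|^2$, I would insert the algebraic bounds $\det W^+\le\frac{\sqrt6}{18}|W^+|^3$ from \eqref{eqdet} and $\langle(\mathring{Ric}\odot\mathring{Ric})^+,W^+\rangle\le\frac{\sqrt6}{3}|\mathring{Ric}|^2|W^+|$ from Lemma~\ref{lemK}, and then linearize the resulting cubic terms through the pointwise estimate $\sqrt6\,|W^+|\le 12(1-\varepsilon)-2S$, which follows from the last line of \eqref{Kbarboundineq}. Feeding the lower bound $S\ge\delta$ into the resulting $S$-dependent coefficients collapses the inequality to $c_1\int_M|W^+|^2\,dV_g\le c_2\int_M|\mathring{Ric}|^2\,dV_g$ with $c_1=5\delta-24(1-\varepsilon)$ and $c_2=4(1-\varepsilon)-\frac23\delta$; the prescribed value $\delta=\frac{360}{67}(1-\varepsilon)$ is exactly what makes the ratio $c_2/c_1=\frac{7}{48}$. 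Running the identical argument for $W^-$ (after reversing the orientation, so that \eqref{weitzenbock_B} and the inequalities of \eqref{eqdet} and Lemma~\ref{lemK} apply verbatim) and adding yields $\int_M|W|^2\,dV_g\le\frac{7}{24}\int_M|\mathring{Ric}|^2\,dV_g$.

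To close, I would integrate the scalar-curvature identity \eqref{trRic_RiclapS} in the same unweighted fashion, obtaining the exact relation $\int_M|\mathring{Ric}|^2\,dV_g=\frac14\int_M S^2\,dV_g-\int_M S\,dV_g$. Combined with the Weyl bound this gives $\int_M|W|^2\,dV_g+\frac54\int_M|\mathring{Ric}|^2\,dV_g\le\frac{37}{24}\bigl(\frac14\int_M S^2\,dV_g-\int_M S\,dV_g\bigr)$, so that the pinching of Theorem~\ref{Catino_Pinching} reduces to the single scalar inequality $\int_M S^2\,dV_g\le\frac{148}{35}\int_M S\,dV_g$. This in turn follows from the upper bound $S\le 6(1-\varepsilon)$ in \eqref{Kbarboundineq}, since $6(1-\varepsilon)\le\frac{148}{35}$. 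Theorem~\ref{Catino_Pinching} then applies and, as $M^4$ is oriented, forces $M^4\cong\mathbb{S}^4$.

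The $\mathbb{CP}^2$ alternative is the equality model of this chain. If the pinching is not strict, every intermediate inequality must be saturated: equality in \eqref{eqdet} and in Lemma~\ref{lemK}, together with the equality case of the Kato inequality (Lemma~\ref{Katoinequality}), forces $\nabla W^+=\nu\otimes W^+$ and $\lambda_1=\lambda_2$, whence Proposition~\ref{kahlerform} makes the metric locally K\"ahler; a compact K\"ahler--Einstein four-manifold compatible with $\overline{K}\ge\varepsilon>0$ is $\mathbb{CP}^2$ (the product $\mathbb{S}^2\times\mathbb{S}^2$, which has zero-curvature mixed planes, being ruled out by the lower bound on $\overline{K}$). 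I expect the main obstacle to be the bookkeeping that forces the $S$-dependent coefficients to collapse to the advertised constants: the exact values $\varepsilon=1-\sqrt{268/567}$ and $\delta=\frac{360}{67}(1-\varepsilon)$ must be pinned down so that simultaneously $c_1>0$, the ratio $c_2/c_1$ is controlled, the final scalar inequality holds, and the estimate stays sharp against the extremal K\"ahler configuration — this joint optimization, rather than any individual step, is the delicate part.
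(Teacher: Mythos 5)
Your main chain is correct, and it is a genuinely different---and in fact sharper---argument than the paper's. The paper never verifies Catino's pinching: it uses Theorem~\ref{Catino_Pinching} only as a dichotomy (either the pinching holds and one is done, or else $\int_M|W|^2\,dV_g+\frac{5}{4}\int_M|\mathring{Ric}|^2\,dV_g\ge\frac{1}{48}\int_M S^2\,dV_g$ may be assumed), keeps the $|W^+||W^-|$ cross terms, and chooses $(\varepsilon,\delta)$ exactly so that, after inserting this reverse inequality, the whole integrand collapses to $|\nabla W^+|^2+|\nabla W^-|^2\le 0$; harmonic Weyl tensor then gives Einstein via \cite{FLGR,MS,Wu}, and Yang's rigidity theorem \cite{yang2000rigidity} (applicable since $K=\overline{K}\ge\varepsilon>(\sqrt{1249}-23)/120$) yields $\mathbb{S}^4$ or $\mathbb{CP}^2$. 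You instead verify Catino's hypothesis directly, and the computations check out: with $\delta=\frac{360}{67}(1-\varepsilon)$ one gets $c_1=\frac{192}{67}(1-\varepsilon)$ and $c_2=\frac{28}{67}(1-\varepsilon)$, so $\int_M|W^\pm|^2\,dV_g\le\frac{7}{48}\int_M|\mathring{Ric}|^2\,dV_g$; the unweighted identity $\int_M|\mathring{Ric}|^2\,dV_g=\frac{1}{4}\int_M S^2\,dV_g-\int_M S\,dV_g$ is exact; and the closing scalar inequality holds with room to spare, since $6(1-\varepsilon)=6\sqrt{268/567}\approx4.125<\frac{148}{35}\approx4.229$. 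Notably, where the paper's constants are critically tuned (two coefficients vanish identically), your chain closes with strict slack, and you obtain the stronger conclusion $M^4\cong\mathbb{S}^4$ (an oriented quotient of the round sphere is the sphere itself), bypassing the harmonic-Weyl step, the Einstein classification, and Yang's theorem entirely. This is consistent with the paper: the $\mathbb{CP}^2$ alternative there is an artifact of quoting Yang's conclusion and is in fact vacuous under these hypotheses, because $\mathbb{CP}^2$ normalized by $Ric=g$ has sectional curvature in $[1/6,\,2/3]$ and $1/6<\varepsilon\approx0.312$.

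Your final paragraph, however, should be deleted: it is unnecessary and logically off. Theorem~\ref{Catino_Pinching} is stated for the non-strict pinching, so no equality-case analysis is needed---once the pinching is verified, the conclusion (a quotient of round $\mathbb{S}^4$) follows regardless of strictness, and your chain is strict anyway since $S\ge\delta>0$ forces $\int_M S^2\,dV_g\le 6(1-\varepsilon)\int_M S\,dV_g<\frac{148}{35}\int_M S\,dV_g$. The suggestion that saturation would force equality in Lemma~\ref{Katoinequality}, \eqref{eqdet} and Lemma~\ref{lemK}, and thereby produce $\mathbb{CP}^2$ via Proposition~\ref{kahlerform}, does not correspond to any step your argument actually requires, and, as noted above, $\mathbb{CP}^2$ cannot occur here at all. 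Since proving $M^4\cong\mathbb{S}^4$ implies the stated disjunction, this defect is harmless to the validity of your proof.
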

\begin{proof} Initially, we combine Proposition \ref{propWeitz},  (\ref{eqdet}) and Lemma \ref{lemK} in order to obtain  
\begin{eqnarray}
\label{3_WIdentity_g}
       \frac{1}{2}\Delta |W^+|^2 &\geq& \frac{1}{2}\nabla_{\nabla f} |W^+|^2+|\nabla W^+|^2 +2 |W^+|^2-\sqrt{6}|W^+|^3\nonumber\\&& - \frac{1}{\sqrt{6}}|\mathring{Ric}|^2|W^+|.
\end{eqnarray} Hence, on integrating by parts, one sees that
\begin{eqnarray*}
 0 & \geq & \int_M \Big(-\frac{1}{2}(\Delta f)|W^+|^2+|\nabla W^+|^2+2|W^+|^2-\sqrt{6}|W^+|^3 \nonumber\\&&- \frac{1}{\sqrt{6}}|\mathring{Ric}|^2|W^+|\Big)dV_g.
 \end{eqnarray*} By using (\ref{traceGRS}), we have
\begin{equation}\label{4_Weizenbock_unormalised_int}
    0 \geq \int_M \Big( |\nabla W^+|^2+ \frac{S}{2}|W^+|^2 - \sqrt{6}|W^+|^3 - \frac{1}{\sqrt{6}}|\mathring{Ric}|^2|W^+| \Big)dV_g.
\end{equation}

Similarly, (\ref{4_Weizenbock_unormalised_int}) holds for $|W^-|.$  Therefore, by combining these two inequalities, we infer
$$
\begin{aligned}
        0 \geq \int_M &\Big[|\nabla W^+|^2 + |\nabla W^-|^2+ \frac{S}{2}\left(|W^+|^2+|W^-|^2\right) \\
        & - \sqrt{6}(|W^+|+|W^-|)(|W^+|^2-|W^+||W^-| + |W^-|^2) \\
        & - \frac{1}{\sqrt{6}}|\TRic|^2\left( |W^+|+|W^-|\right)\Big]dV_g.
\end{aligned}
$$ Now, by using \eqref{Kbarboundineq} into the fourth term above (noting that the second factor is always nonnegative), one deduces that

\begin{eqnarray}
\label{ineq871p}
        0 &\geq & \int_M \Big[|\nabla W^+|^2 + |\nabla W^-|^2+ \frac{S}{2}\left(|W^+|^2+|W^-|^2\right)\nonumber \\
        && - 12(1-\varepsilon)(|W^+|^2 + |W^-|^2) + 12(1-\varepsilon)|W^+||W^-| \nonumber\\
        && +2S(|W^+|^2+|W^-|^2) - 2S|W^+|W^-|\nonumber \\
        && - 2(1-\varepsilon)|\mathring{Ric}|^2+\frac{S}{3} |\mathring{Ric}|^2 \Big] dV_g.
\end{eqnarray}

On the other hand, on integrating (\ref{trRic_RiclapS}) and using (\ref{traceGRS}), one obtains that

\begin{eqnarray}
\label{intricball}
2\int_{M} |\mathring{Ric}|^2 dV_{g}&=& \int_{M} \left(2S -\frac{S^2}{2}-\Delta_{f}S\right) dV_g\nonumber\\ &=& 8 Vol(M)-\frac{1}{2}\int_{M}S^2 dV_{g} +\int_{M}\langle \nabla f,\,\nabla S\rangle dV_{g}\nonumber\\&=& 8 Vol(M)-\frac{1}{2}\int_{M}S^2 dV_{g} +\int_{M}S^{2} dV_{g}\nonumber\\&&-16 Vol(M)\nonumber\\&=& \frac{1}{2}\int_{M}S^2 dV_g - 8 Vol(M).
\end{eqnarray} Plugging this into (\ref{ineq871p}), we get

\begin{eqnarray*}
        0 &\geq & \int_M \Big[ |\nabla W^+|^2 + |\nabla W^-|^2 +\Big( \frac{5}{2}S -12(1-\varepsilon)-\frac{4}{15}\delta\Big) |W|^2 \nonumber\\
        && +\frac{4}{15}\delta |W|^2 + \frac{S}{3}|\mathring{Ric}|^2 - \Big(\frac{1-\varepsilon}{2}\Big)S^2 \nonumber\\
        && +(12(1-\varepsilon)-2S)|W^+||W^-| + 8(1-\varepsilon)\Big]\,dV_g.
\end{eqnarray*} By the second inequality in \eqref{Kbarboundineq}, one sees that $$(12(1-\varepsilon)-2S)|W^+||W^-|\geq 0,$$ which implies 

\begin{eqnarray}
\label{lmn10i}
        0 &\geq & \int_M \Big[ |\nabla W^+|^2 + |\nabla W^-|^2 +\Big( \frac{5}{2}S -12(1-\varepsilon)-\frac{4}{15}\delta\Big) |W|^2 \nonumber\\
        && +\frac{4}{15}\delta |W|^2 + \frac{S}{3}|\mathring{Ric}|^2 - \Big(\frac{1-\varepsilon}{2}\Big)S^2  + 8(1-\varepsilon)\Big]\,dV_g.
\end{eqnarray} Next, it follows from Theorem \ref{Catino_Pinching} (cf. \cite{catinoAdv}) that
$$
\int_M |W|^2 dV_g + \frac{5}{4}\int_M |\mathring{Ric}|^2dV_g \geq \frac{1}{48} \int_M S^2 dV_g.
$$ This substituted into the fourth term in the right-hand side of (\ref{lmn10i}) yields

\begin{equation}\label{4_IV_final}
    \begin{aligned}
        0 \geq \int_M & \Big[|\nabla W^+|^2 + |\nabla W^-|^2 +\left( \frac{5S}{2} -12(1-\varepsilon)-\frac{4}{15}\delta\right) |W|^2 \\
        & +\left(\frac{S-\delta}{3}\right)|\mathring{Ric}|^2 + \left(\frac{\delta}{180}-\frac{1-\varepsilon}{2}\right)S^2  + 8(1-\varepsilon)\Big]\,dV_g.
\end{aligned}
\end{equation}
Our assumption $S \geq \delta,$ together with the chosen values of $\varepsilon$ and $\delta,$ guarantee that the third and fourth terms on the right-hand side of the inequality (\ref{4_IV_final}) are nonnegative. Moreover, observe that the coefficient of $S^2$ simplifies to $-\frac{63}{134}(1-\varepsilon)$. Then, $\eqref{Kbarboundineq}$ implies
$$
-\frac{63}{134}(1-\varepsilon)S^2  + 8(1-\varepsilon) \geq \left(-\frac{1134}{67}(1-\varepsilon)^2+8\right)(1-\varepsilon) =0.
$$ Therefore, returning to (\ref{4_IV_final}), one deduces that 

\begin{equation}
     0 \geq \int_M  \Big(|\nabla W^+|^2 + |\nabla W^-|^2 \Big)dV_g.
\end{equation} Consequently, the Weyl tensor is harmonic. So, it follows from the works \cite{FLGR,MS,Wu} (see also \cite{CMM,yang2017rigidity}) that $(M^4,\,g)$ is Einstein. 

Finally, observing that in this case we have $K = \overline{K}>\varepsilon,$ with $\varepsilon>(\sqrt{1249}-23) / 120,$ it suffices to apply \cite[Theorem 1.1]{yang2000rigidity} to conclude that $(M^4,\,g)$ is iso\-me\-tric to either the standard sphere $\mathbb{S}^4$, or the complex projective space $\mathbb{C} \mathbb{P}^2.$ This completes the proof of the theorem. 
\end{proof}

We now proceed with the proof of Theorem \ref{thmNew}, which follows as a particular case of the following more general theorem.

\begin{theorem}
\label{Thm2c}
Let $(M^4,\,g,\,f)$ be an oriented $4$-dimensional compact gradient shrin\-king Ricci soliton satisfying \eqref{grs}. For any $t \in (0,1),$ consider $\varepsilon$ and $\delta$ such that 
\begin{equation}
\label{Thm2cA1}
     \left(\left(\frac{1-t}{2}\right)\delta + 2\delta - 12(1-\varepsilon)\right)>0   
     \end{equation}
    and
    \begin{equation}
\label{Thm2cA12}
    \frac{t\delta}{9} - \frac{4\delta}{3}+8(1-\varepsilon)+ 9\left(\frac{\delta}{3} - 2(1-\varepsilon) - \frac{t \delta}{54}\right)(1-\varepsilon)^2 >0.
\end{equation} Suppose that $\overline{K} \geq \varepsilon$, $S \geq \delta$ and 
\begin{equation}
\label{Thm2cA2}
    \int_{M}|\delta W^{+}|^2 dV_g \leq \int_{M} \frac{S}{6}|W^{+}|^2 dV_g.
\end{equation}
Then $M^4$ is either Einstein, or isometric to the standard sphere $\mathbb{S}^4,$ or  the complex projective space $\mathbb{C} \mathbb{P}^2.$ 
\end{theorem}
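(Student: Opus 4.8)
The plan is to run the same scheme as in the proof of Theorem~\ref{ThmB_1}, but to concentrate on the self-dual part $W^+$ alone and to use the divergence hypothesis \eqref{Thm2cA2} as the extra input that permits the weaker thresholds. Concretely, I would start from the Weitzenb\"ock formula of Proposition~\ref{propWeitz}, combine it with \eqref{eqdet} and Lemma~\ref{lemK} to reach the pointwise inequality \eqref{3_WIdentity_g}, and then integrate, converting the drift term by \eqref{traceGRS} exactly as in the passage to \eqref{4_Weizenbock_unormalised_int}:
\[
0 \;\geq\; \int_M\Big(|\nabla W^+|^2 + \tfrac{S}{2}|W^+|^2 - \sqrt{6}\,|W^+|^3 - \tfrac{1}{\sqrt{6}}|\mathring{Ric}|^2|W^+|\Big)\,dV_g .
\]
Since the hypothesis controls only $\delta W^+$, I would not treat $W^-$ symmetrically; instead the goal is to force $\nabla W^+\equiv 0$, so that $\delta W^+\equiv 0$ and $(M^4,g)$ is Einstein by the harmonic self-dual Weyl classification of \cite{Wu} (see also \cite{CMM}).

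The essential new step is to improve, by a factor governed by $t$, the gradient energy $\int_M|\nabla W^+|^2$. Writing $|\nabla W^+|^2=(1-t)|\nabla W^+|^2+t|\nabla W^+|^2$, I would keep the first summand as a nonnegative reserve and rewrite the second through the second-Bianchi/Bochner relation between $|\nabla W^+|^2$ and $|\delta W^+|^2$; bounding $\int_M|\delta W^+|^2$ by $\int_M\tfrac S6|W^+|^2$ via \eqref{Thm2cA2} then feeds a controlled multiple of $\int_M S|W^+|^2$ back into the estimate. The effect is to shift the coefficient of $|W^+|^2$ arising from the Weitzenb\"ock term together with the cubic bound $-\sqrt6|W^+|^3\geq \big(2S-12(1-\varepsilon)\big)|W^+|^2$ (obtained from the last inequality in \eqref{Kbarboundineq}, legitimate since $S\leq 6(1-\varepsilon)$) to $\tfrac{1-t}{2}S+2S-12(1-\varepsilon)$. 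Using $S\geq\delta$, its nonnegativity is exactly condition \eqref{Thm2cA1}. The same substitution simultaneously loosens the $S^2$-balance that was tight (an equality) at the end of the proof of Theorem~\ref{ThmB_1}, which is precisely why taking $t>0$ permits the smaller thresholds $\varepsilon,\delta$ of Theorem~\ref{thmNew}.

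It then remains to dispose of the $|\mathring{Ric}|^2$, $S^2$ and constant contributions. As in \eqref{intricball} I would trade $\int_M|\mathring{Ric}|^2$ for $\tfrac12\int_M S^2$ minus a volume term, and apply Catino's integral gap (Theorem~\ref{Catino_Pinching}) to absorb a fraction of $\int_M|W^+|^2$ against $\int_M S^2$ and $\int_M|\mathring{Ric}|^2$, exactly as in the derivation of \eqref{4_IV_final}. After this the $|\mathring{Ric}|^2$-coefficient is proportional to $S-\delta\geq 0$, while the residual $S^2$-coefficient is negative; substituting $S^2\leq 36(1-\varepsilon)^2$ from \eqref{Kbarboundineq} collapses the surviving $S^2$-and-constant terms into precisely the expression in \eqref{Thm2cA12}. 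Hypotheses \eqref{Thm2cA1} and \eqref{Thm2cA12} thus render the whole integrand nonnegative, forcing $(1-t)\int_M|\nabla W^+|^2=0$, hence $\nabla W^+\equiv 0$ and $M^4$ Einstein. Finally, since $K=\overline K\geq\varepsilon$ in the Einstein case, Yang's rigidity theorem \cite{yang2000rigidity} identifies $M^4$ as $\mathbb{S}^4$ or $\mathbb{CP}^2$, with the Gursky--LeBrun gap theorem (Theorem~\ref{Gursky_Gap}) and \cite{derd1} characterizing the K\"ahler model $\mathbb{CP}^2$ through the equality case $\nabla W^+\equiv 0$.

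The main obstacle I anticipate is pinning down the precise Bianchi/Bochner identity linking $\int_M|\nabla W^+|^2$ and $\int_M|\delta W^+|^2$ with sharp constants, and verifying that it is compatible with the drift Laplacian $\Delta_f$ (and the weighted integration used above) rather than with the Hodge Laplacian. Once that identity is fixed, the remaining difficulty is purely bookkeeping: one must track how the parameter $t$, the weight $\tfrac16$ from \eqref{Thm2cA2}, and the Catino weights $\tfrac{1}{48}$ and $\tfrac54$ propagate, so that the two thresholds emerge in the exact normalized form \eqref{Thm2cA1} and \eqref{Thm2cA12}. Keeping the equality analysis sharp enough to recover $\nabla W^+\equiv 0$, rather than merely $\delta W^+\equiv 0$, is the last delicate point, as it is what links the analytic conclusion to the geometric rigidity.
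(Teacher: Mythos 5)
Your overall skeleton (integrated Weitzenb\"ock inequality \eqref{4_Weizenbock_unormalised_int} $\Rightarrow$ force $\nabla W^{+}\equiv 0$ $\Rightarrow$ Einstein by \cite{Wu} $\Rightarrow$ $\mathbb{S}^4$ or $\mathbb{CP}^2$ by \cite{yang2000rigidity}) agrees with the paper, but the mechanism you propose for exploiting the hypothesis \eqref{Thm2cA2} cannot work, and that mechanism is the actual content of the theorem. You plan to split the gradient energy as $(1-t)|\nabla W^{+}|^2+t|\nabla W^{+}|^2$ and convert the second piece, via a Bianchi/Bochner relation between $|\nabla W^{+}|^2$ and $|\delta W^{+}|^2$, into a multiple of $\int_M \frac{S}{6}|W^{+}|^2\,dV_g$ using \eqref{Thm2cA2}. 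The signs do not chain: the divergence is a contraction of the full covariant derivative, so pointwise (and in any Bochner-type identity) $|\delta W^{+}|^2$ is controlled \emph{above} by $|\nabla W^{+}|^2$, while \eqref{Thm2cA2} also bounds $\int_M|\delta W^{+}|^2\,dV_g$ from \emph{above}. Two upper bounds on the same quantity can only ever produce an upper bound on $\int_M|\nabla W^{+}|^2\,dV_g$, which is useless for concluding $\nabla W^{+}\equiv 0$ from an inequality of the form $0\geq\int_M\big(|\nabla W^{+}|^2+\cdots\big)dV_g$; what one would need is a \emph{lower} bound for $\int_M|\delta W^{+}|^2$, i.e.\ the reverse of \eqref{Thm2cA2}. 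Your fallback, applying Catino's Theorem~\ref{Catino_Pinching} ``exactly as in the derivation of \eqref{4_IV_final}'', fails for the same directional reason: Catino bounds $\int_M|W|^2=\int_M(|W^{+}|^2+|W^{-}|^2)$ from below, and since $|W|^2\geq|W^{+}|^2$ this gives no lower bound on $\int_M|W^{+}|^2$ alone; it was usable in Theorem~\ref{ThmB} only because there the two half-Weyl Weitzenb\"ock inequalities were summed, which is precisely what the present one-sided statement must avoid. (A further symptom that the Theorem~\ref{ThmB} bookkeeping does not transfer: for the parameters of Theorem~\ref{thmNew} one has $\frac{\delta}{3}-2(1-\varepsilon)<0$, so the traceless-Ricci coefficient is \emph{not} of the form $\frac{S-\delta}{3}\geq 0$ here.)

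What the paper actually does is quite different at this point. The $t$-split is performed on the zeroth-order term $\frac{\delta}{2}|W^{+}|^2$ in \eqref{123er41}, not on the gradient term, and hypothesis \eqref{Thm2cA2} enters through conformal invariance rather than through any Bochner identity: by \cite[Propositions 4.4, 4.5 and Remark 4.1]{CH}, \eqref{Thm2cA2} forces the weighted Yamabe functional to satisfy $\widehat{Y}_{1,6\sqrt{6}}(M)\leq 0$, which yields a conformal metric $\widetilde{g}$ with $\int_M|\widetilde{W}^{+}|^2\,dV_{\widetilde g}\geq\frac{1}{216}\int_M\widetilde{S}^2\,dV_{\widetilde g}$ as in \eqref{Prop4.5}. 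Since both sides of the Gursky--LeBrun-type gap inequality \eqref{4_gap_assumption_new}, namely $\int_M|W^{+}|^2\,dV_g\geq\frac{1}{9}\int_M\big(\frac{S^2}{24}-\frac{|\mathring{Ric}|^2}{2}\big)dV_g$, are conformally invariant, its failure would contradict \eqref{Prop4.5}; hence \eqref{4_gap_assumption_new} holds. Feeding it into the reserved term $\frac{t\delta}{2}\int_M|W^{+}|^2\,dV_g$, and then using \eqref{intricball} together with $S^2\leq 36(1-\varepsilon)^2$ from \eqref{Kbarboundineq}, is what produces the positive constant $\frac{t\delta}{9}$ that offsets $-\frac{4\delta}{3}+8(1-\varepsilon)+\cdots$, yielding exactly the thresholds \eqref{Thm2cA1} and \eqref{Thm2cA12} and then $\nabla W^{+}\equiv 0$. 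Without this conformal step your argument has no source of positivity beyond the hypotheses of Theorem~\ref{ThmB}, so the weaker constants of Theorem~\ref{Thm2c} cannot be reached.
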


By varying $t \in (0,1)$, one obtains a non-trivial region in the
$(\delta,\,\varepsilon)$-plane where inequalities \eqref{Thm2cA1} and \eqref{Thm2cA12} are simultaneously satisfied. For $t=0.465$, we have the following green region.

\vspace{0.30cm}

\begin{tikzpicture}[scale=0.75]
\begin{axis}[
    width=12cm, height=8cm,
    xlabel={$\delta$},
    ylabel={$\varepsilon$},
    title={{\bf Graph for $t=0.465$ satisfying (\ref{Thm2cA1}) and (\ref{Thm2cA12})}},
    xmin=0.0, xmax=4.0,
    ymin=0.0, ymax=0.3333333333333333,
    legend pos=south east,
    grid=both,
    enlargelimits=false,
    axis on top,
    clip=true
]
\addplot[fill=green!60, fill opacity=0.35, draw=none] coordinates {(3.996338,0.288815) (3.988643,0.289371) (3.980848,0.289928) (3.972951,0.290484) (3.964947,0.291041) (3.956834,0.291597) (3.948606,0.292154) (3.940259,0.292710) (3.931790,0.293267) (3.923193,0.293823) (3.914463,0.294380) (3.905596,0.294936) (3.896586,0.295492) (3.887427,0.296049) (3.878114,0.296605) (3.868639,0.297162) (3.858996,0.297718) (3.849178,0.298275) (3.839178,0.298831) (3.828987,0.299388) (3.818597,0.299944) (3.808000,0.300501) (3.797185,0.301057) (3.786143,0.301614) (3.774863,0.302170) (3.763334,0.302727) (3.751544,0.303283) (3.739480,0.303840) (3.727127,0.304396) (3.714472,0.304953) (3.701499,0.305509) (3.688191,0.306066) (3.674530,0.306622) (3.666530,0.307179) (3.663585,0.307735) (3.660640,0.308292) (3.657695,0.308848) (3.654750,0.309405) (3.651805,0.309961) (3.648860,0.310518) (3.645915,0.311074) (3.642970,0.311630) (3.640025,0.312187) (3.637080,0.312743) (3.634135,0.313300) (3.631190,0.313856) (3.628245,0.314413) (3.625300,0.314969) (3.622355,0.315526) (3.619410,0.316082) (3.616465,0.316639) (3.613520,0.317195) (3.610575,0.317752) (3.607630,0.318308) (3.604685,0.318865) (3.601740,0.319421) (3.598795,0.319978) (3.595850,0.320534) (3.592905,0.321091) (3.589960,0.321647) (3.587015,0.322204) (3.584070,0.322760) (3.581125,0.323317) (3.578180,0.323873) (3.575235,0.324430) (3.572290,0.324986) (3.569345,0.325543) (3.566400,0.326099) (3.563455,0.326656) (3.560510,0.327212) (3.557565,0.327769) (3.554620,0.328325) (3.551675,0.328881) (3.548730,0.329438) (3.545785,0.329994) (3.542840,0.330551) (3.539895,0.331107) (3.536950,0.331664) (3.534005,0.332220) (3.531060,0.332777) (3.528115,0.333333) (4.000000,0.333333) (4.000000,0.332777) (4.000000,0.332220) (4.000000,0.331664) (4.000000,0.331107) (4.000000,0.330551) (4.000000,0.329994) (4.000000,0.329438) (4.000000,0.328881) (4.000000,0.328325) (4.000000,0.327769) (4.000000,0.327212) (4.000000,0.326656) (4.000000,0.326099) (4.000000,0.325543) (4.000000,0.324986) (4.000000,0.324430) (4.000000,0.323873) (4.000000,0.323317) (4.000000,0.322760) (4.000000,0.322204) (4.000000,0.321647) (4.000000,0.321091) (4.000000,0.320534) (4.000000,0.319978) (4.000000,0.319421) (4.000000,0.318865) (4.000000,0.318308) (4.000000,0.317752) (4.000000,0.317195) (4.000000,0.316639) (4.000000,0.316082) (4.000000,0.315526) (4.000000,0.314969) (4.000000,0.314413) (4.000000,0.313856) (4.000000,0.313300) (4.000000,0.312743) (4.000000,0.312187) (4.000000,0.311630) (4.000000,0.311074) (4.000000,0.310518) (4.000000,0.309961) (4.000000,0.309405) (4.000000,0.308848) (4.000000,0.308292) (4.000000,0.307735) (4.000000,0.307179) (4.000000,0.306622) (4.000000,0.306066) (4.000000,0.305509) (4.000000,0.304953) (4.000000,0.304396) (4.000000,0.303840) (4.000000,0.303283) (4.000000,0.302727) (4.000000,0.302170) (4.000000,0.301614) (4.000000,0.301057) (4.000000,0.300501) (4.000000,0.299944) (4.000000,0.299388) (4.000000,0.298831) (4.000000,0.298275) (4.000000,0.297718) (4.000000,0.297162) (4.000000,0.296605) (4.000000,0.296049) (4.000000,0.295492) (4.000000,0.294936) (4.000000,0.294380) (4.000000,0.293823) (4.000000,0.293267) (4.000000,0.292710) (4.000000,0.292154) (4.000000,0.291597) (4.000000,0.291041) (4.000000,0.290484) (4.000000,0.289928) (4.000000,0.289371) (4.000000,0.288815) (3.996338,0.288815)};
\addlegendentry{}

\addplot[thick, color=blue] coordinates {(3.999315,0.244296) (3.996370,0.244853) (3.993425,0.245409) (3.990480,0.245965) (3.987535,0.246522) (3.984590,0.247078) (3.981645,0.247635) (3.978700,0.248191) (3.975755,0.248748) (3.972810,0.249304) (3.969865,0.249861) (3.966920,0.250417) (3.963975,0.250974) (3.961030,0.251530) (3.958085,0.252087) (3.955140,0.252643) (3.952195,0.253200) (3.949250,0.253756) (3.946305,0.254313) (3.943360,0.254869) (3.940415,0.255426) (3.937470,0.255982) (3.934525,0.256539) (3.931580,0.257095) (3.928635,0.257652) (3.925690,0.258208) (3.922745,0.258765) (3.919800,0.259321) (3.916855,0.259878) (3.913910,0.260434) (3.910965,0.260991) (3.908020,0.261547) (3.905075,0.262104) (3.902130,0.262660) (3.899185,0.263216) (3.896240,0.263773) (3.893295,0.264329) (3.890350,0.264886) (3.887405,0.265442) (3.884460,0.265999) (3.881515,0.266555) (3.878570,0.267112) (3.875625,0.267668) (3.872680,0.268225) (3.869735,0.268781) (3.866790,0.269338) (3.863845,0.269894) (3.860900,0.270451) (3.857955,0.271007) (3.855010,0.271564) (3.852065,0.272120) (3.849120,0.272677) (3.846175,0.273233) (3.843230,0.273790) (3.840285,0.274346) (3.837340,0.274903) (3.834395,0.275459) (3.831450,0.276016) (3.828505,0.276572) (3.825560,0.277129) (3.822615,0.277685) (3.819670,0.278242) (3.816725,0.278798) (3.813780,0.279354) (3.810835,0.279911) (3.807890,0.280467) (3.804945,0.281024) (3.802000,0.281580) (3.799055,0.282137) (3.796110,0.282693) (3.793165,0.283250) (3.790220,0.283806) (3.787275,0.284363) (3.784330,0.284919) (3.781385,0.285476) (3.778440,0.286032) (3.775495,0.286589) (3.772550,0.287145) (3.769605,0.287702) (3.766660,0.288258) (3.763715,0.288815) (3.760770,0.289371) (3.757825,0.289928) (3.754880,0.290484) (3.751935,0.291041) (3.748990,0.291597) (3.746045,0.292154) (3.743100,0.292710) (3.740155,0.293267) (3.737210,0.293823) (3.734265,0.294380) (3.731320,0.294936) (3.728375,0.295492) (3.725430,0.296049) (3.722485,0.296605) (3.719540,0.297162) (3.716595,0.297718) (3.713650,0.298275) (3.710705,0.298831) (3.707760,0.299388) (3.704815,0.299944) (3.701870,0.300501) (3.698925,0.301057) (3.695980,0.301614) (3.693035,0.302170) (3.690090,0.302727) (3.687145,0.303283) (3.684200,0.303840) (3.681255,0.304396) (3.678310,0.304953) (3.675365,0.305509) (3.672420,0.306066) (3.669475,0.306622) (3.666530,0.307179) (3.663585,0.307735) (3.660640,0.308292) (3.657695,0.308848) (3.654750,0.309405) (3.651805,0.309961) (3.648860,0.310518) (3.645915,0.311074) (3.642970,0.311630) (3.640025,0.312187) (3.637080,0.312743) (3.634135,0.313300) (3.631190,0.313856) (3.628245,0.314413) (3.625300,0.314969) (3.622355,0.315526) (3.619410,0.316082) (3.616465,0.316639) (3.613520,0.317195) (3.610575,0.317752) (3.607630,0.318308) (3.604685,0.318865) (3.601740,0.319421) (3.598795,0.319978) (3.595850,0.320534) (3.592905,0.321091) (3.589960,0.321647) (3.587015,0.322204) (3.584070,0.322760) (3.581125,0.323317) (3.578180,0.323873) (3.575235,0.324430) (3.572290,0.324986) (3.569345,0.325543) (3.566400,0.326099) (3.563455,0.326656) (3.560510,0.327212) (3.557565,0.327769) (3.554620,0.328325) (3.551675,0.328881) (3.548730,0.329438) (3.545785,0.329994) (3.542840,0.330551) (3.539895,0.331107) (3.536950,0.331664) (3.534005,0.332220) (3.531060,0.332777) (3.528115,0.333333)};
\addlegendentry{(\ref{Thm2cA1}) = 0}

\addplot[thick, color=red] coordinates {(3.996338,0.288815) (3.988643,0.289371) (3.980848,0.289928) (3.972951,0.290484) (3.964947,0.291041) (3.956834,0.291597) (3.948606,0.292154) (3.940259,0.292710) (3.931790,0.293267) (3.923193,0.293823) (3.914463,0.294380) (3.905596,0.294936) (3.896586,0.295492) (3.887427,0.296049) (3.878114,0.296605) (3.868639,0.297162) (3.858996,0.297718) (3.849178,0.298275) (3.839178,0.298831) (3.828987,0.299388) (3.818597,0.299944) (3.808000,0.300501) (3.797185,0.301057) (3.786143,0.301614) (3.774863,0.302170) (3.763334,0.302727) (3.751544,0.303283) (3.739480,0.303840) (3.727127,0.304396) (3.714472,0.304953) (3.701499,0.305509) (3.688191,0.306066) (3.674530,0.306622) (3.660497,0.307179) (3.646072,0.307735) (3.631231,0.308292) (3.615951,0.308848) (3.600206,0.309405) (3.583968,0.309961) (3.567208,0.310518) (3.549891,0.311074) (3.531984,0.311630) (3.513447,0.312187) (3.494238,0.312743) (3.474311,0.313300) (3.453617,0.313856) (3.432100,0.314413) (3.409700,0.314969) (3.386351,0.315526) (3.361980,0.316082) (3.336507,0.316639) (3.309840,0.317195) (3.281882,0.317752) (3.252521,0.318308) (3.221634,0.318865) (3.189081,0.319421) (3.154707,0.319978) (3.118334,0.320534) (3.079763,0.321091) (3.038766,0.321647) (2.995082,0.322204) (2.948414,0.322760) (2.898416,0.323317) (2.844688,0.323873) (2.786764,0.324430) (2.724095,0.324986) (2.656032,0.325543) (2.581806,0.326099) (2.500489,0.326656) (2.410961,0.327212) (2.311849,0.327769) (2.201461,0.328325) (2.077677,0.328881) (1.937812,0.329438) (1.778414,0.329994) (1.594964,0.330551) (1.381432,0.331107) (1.129592,0.331664) (0.827915,0.332220) (0.459730,0.332777) (0.000000,0.333333)};
\addlegendentry{(\ref{Thm2cA12}) = 0}

\addplot[thin, color=black, dotted] coordinates {(3.996338,0.288815) (3.988643,0.289371) (3.980848,0.289928) (3.972951,0.290484) (3.964947,0.291041) (3.956834,0.291597) (3.948606,0.292154) (3.940259,0.292710) (3.931790,0.293267) (3.923193,0.293823) (3.914463,0.294380) (3.905596,0.294936) (3.896586,0.295492) (3.887427,0.296049) (3.878114,0.296605) (3.868639,0.297162) (3.858996,0.297718) (3.849178,0.298275) (3.839178,0.298831) (3.828987,0.299388) (3.818597,0.299944) (3.808000,0.300501) (3.797185,0.301057) (3.786143,0.301614) (3.774863,0.302170) (3.763334,0.302727) (3.751544,0.303283) (3.739480,0.303840) (3.727127,0.304396) (3.714472,0.304953) (3.701499,0.305509) (3.688191,0.306066) (3.674530,0.306622) (3.666530,0.307179) (3.663585,0.307735) (3.660640,0.308292) (3.657695,0.308848) (3.654750,0.309405) (3.651805,0.309961) (3.648860,0.310518) (3.645915,0.311074) (3.642970,0.311630) (3.640025,0.312187) (3.637080,0.312743) (3.634135,0.313300) (3.631190,0.313856) (3.628245,0.314413) (3.625300,0.314969) (3.622355,0.315526) (3.619410,0.316082) (3.616465,0.316639) (3.613520,0.317195) (3.610575,0.317752) (3.607630,0.318308) (3.604685,0.318865) (3.601740,0.319421) (3.598795,0.319978) (3.595850,0.320534) (3.592905,0.321091) (3.589960,0.321647) (3.587015,0.322204) (3.584070,0.322760) (3.581125,0.323317) (3.578180,0.323873) (3.575235,0.324430) (3.572290,0.324986) (3.569345,0.325543) (3.566400,0.326099) (3.563455,0.326656) (3.560510,0.327212) (3.557565,0.327769) (3.554620,0.328325) (3.551675,0.328881) (3.548730,0.329438) (3.545785,0.329994) (3.542840,0.330551) (3.539895,0.331107) (3.536950,0.331664) (3.534005,0.332220) (3.531060,0.332777) (3.528115,0.333333)};

\end{axis}

\end{tikzpicture}

\vspace{0.30cm}

\begin{proof}
To begin with, notice from (\ref{4_Weizenbock_unormalised_int}) that

\begin{equation*}
    0 \geq \int_M \Big( |\nabla W^+|^2+ \frac{S}{2}|W^+|^2 - \sqrt{6}|W^+|^3 - \frac{1}{\sqrt{6}}|\mathring{Ric}|^2|W^+| \Big)dV_g.
\end{equation*} Then, we use the last inequality in \eqref{Kbarboundineq} to infer 
\begin{eqnarray*}
       0 &\geq &  \int_M \Big[|\nabla W^+|^2 + \frac{S}{2}|W^+|^2+\left(2S-12(1-\varepsilon)\right)|W^+|^2 \nonumber\\&&+\left(\frac{S}{3}-2(1-\varepsilon)\right)|\mathring{Ric}|^2 \Big] dV_g.
\end{eqnarray*} Since $S \geq \delta,$ one sees that

\begin{eqnarray}
\label{123er41}
        0 & \geq & \int_M \Big[ |\nabla W^+|^2  + \frac{\delta}{2}|W^+|^2+\left(2\delta-12(1-\varepsilon)\right)|W^+|^2 \nonumber\\&&+\left(\frac{\delta}{3}-2(1-\varepsilon)\right)|\TRic|^2\Big]\,dV_g\nonumber\\
       & = &\int_M \Big[ |\nabla W^+|^2  + \left(\left(\frac{1-t}{2}\right)\delta + 2\delta - 12(1-\varepsilon)\right)|W^+|^2 + \frac{t\delta}{2} |W^+|^2\nonumber\\
       &&  +\left(\frac{\delta}{3}-2(1-\varepsilon)\right)|\TRic|^2 \Big] \,dV_g,
\end{eqnarray} where $t\in(0,1)$ will be determined later.

At this time, we proceed by assuming that

\begin{equation}
\label{4_gap_assumption_new}
    \int_M |W^+|^2dV_g \geq \frac{1}{9} \int_M \left( \frac{S^2}{24}-\frac{|\TRic|^2}{2}\right)dV_g.
\end{equation} In this situation, we may use (\ref{intricball}) to infer

\begin{equation}
    \int_M|W^+|^2 dV_g \geq \frac{1}{9} \int_M \left(2-\frac{S^2}{12}\right)dV_g.
\end{equation} This substituted into (\ref{123er41}) yields

\begin{eqnarray*}
    0 & \geq & \int_M\Big[ |\nabla W^+|^2  + \left(\left(\frac{1-t}{2}\right)\delta + 2\delta - 12(1-\varepsilon)\right)|W^+|^2  \\
       && + \frac{t\delta}{9} - \frac{4\delta}{3}+8(1-\varepsilon)+ \left(\frac{\delta}{3} - 2(1-\varepsilon) - \frac{t \delta}{54}\right)\frac{S^2}{4}\Big]dV_g. 
\end{eqnarray*} Taking into account that $S^2 \leq 36(1-\varepsilon)^2,$ and noting that the last term on the right-hand side is nonpositive, we obtain 
    \begin{eqnarray}
    \label{4thmc_choice_delta_ep_t}
         0 &\geq& \int_M \Big[|\nabla W^+|^2  + \left(\left(\frac{1-t}{2}\right)\delta + 2\delta - 12(1-\varepsilon)\right)|W^+|^2\nonumber \\
       && + \frac{t\delta}{9} - \frac{4\delta}{3}+8(1-\varepsilon)+ 9\left(\frac{\delta}{3} - 2(1-\varepsilon) - \frac{t \delta}{54}\right)(1-\varepsilon)^2\Big]dV_g.
    \end{eqnarray} Notice that, under conditions \eqref{Thm2cA1} and \eqref{Thm2cA12}, the second line on the right-hand side of \eqref{4thmc_choice_delta_ep_t} is nonnegative. Consequently,  \eqref{4thmc_choice_delta_ep_t} yields 
$$\int_M |\nabla W^+|^2\,dV_g\leq 0,$$ which implies that $\nabla W^+=0.$ Hence, $\delta W^+=0,$ that is, the manifold has half-harmonic Weyl curvature. It then suffices to apply \cite[Theorem 1.1]{Wu} to conclude that $M^4$ is Einstein. Furthermore, observing that the sectional curvature satisfies $K = \overline{K}>\varepsilon$ with $\varepsilon>(\sqrt{1249}-23) / 120,$ which is the sharp lower bound established in \cite[Theorem 1.1]{yang2000rigidity}, we invoke the rigidity result therein to conclude that $(M^4,\,g)$ is isometric to either $\mathbb{S}^4$ or the complex projective space $\mathbb{CP}^2.$

To conclude, we need to show why \eqref{4_gap_assumption_new} must hold. Suppose, by contradiction, that
\begin{equation}
\label{eqklj90p}
    \int_M |W^+|^2\,dV_g <\frac{1}{9} \int_M \left( \frac{S^2}{24}-\frac{|\TRic|^2}{2}\right)\,dV_g.
\end{equation} Note that both sides of this inequality are conformally invariant. By \cite[Proposition 4.5 and Remark 4.1]{CH}, assumption \eqref{Thm2cA2} implies that the weighted Yamabe functional satisfies $\widehat{Y}_{1,6\sqrt{6}}(M)\leq 0.$ Then, by \cite[Proposition 4.4]{CH}, there exists a smooth conformal metric $\widetilde{g}=u^2g$ such that 
\begin{equation}
\label{Prop4.5}
\int_M |\widetilde{W}^{+}|^2 d V_{\widetilde{g}} \geq  \frac{1}{216}\int_M \widetilde{\mathrm{~S}}^2 d V_{\widetilde{g}}.
\end{equation} However, applying the same conformal scaling to  (\ref{eqklj90p}), one obtains that 

\begin{equation}
\begin{aligned}
     \int_M |\widetilde{W}^+|^2dV_{\widetilde{g}} &< \frac{1}{9}\int_M \left(\frac{\widetilde{S}^2}{24} - \frac{|\overset{\circ}{\widetilde{\text{Ric}}}|^2}{2}\right)dV_{\widetilde{g}}\\
     &< \frac{1}{216}\int_M\widetilde{S}^2dV_{\widetilde{g}},
\end{aligned}
\end{equation} which contradicts \eqref{Prop4.5}. So, the proof is completed. 

\end{proof}

\begin{proof}[{\bf Proof of Theorem \ref{thmNew}}]
       Let $t=0.465$, $\delta \approx 3.668$ and $\varepsilon \approx 0.3069$. With these values, \eqref{Thm2cA1} and \eqref{Thm2cA12} are satisfied, and thus Theorem~\ref{Thm2c} applies.
\end{proof}

\vspace{0.30cm}

\subsection{Weighted integral bound}
Here, we present the proof of Theorem \ref{ThmC}. To this end, it is important to recall the following weighted Lichnerowicz-type estimate (cf. \cite{BakryEmery,WW}).

\begin{lemma}[\cite{BakryEmery,WW}]
\label{lemWW}
Let $(M^n,\,g)$ be a complete Riemannian manifold and $f:M\to \mathbb{R}.$  If $Ric_f \geq (n-1) \sigma,$ then $$\lambda_1 \geq (n-1)\sigma,$$ where $\lambda_1$ is the first eigenvalue of the weighted Laplacian operator. 
\end{lemma}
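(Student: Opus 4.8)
The plan is to prove this weighted Lichnerowicz estimate by an integrated Bochner argument adapted to the drift Laplacian $\Delta_f = \Delta - \nabla_{\nabla f}$, which is the natural self-adjoint operator for the weighted measure $e^{-f}\,dV_g$; accordingly, $\lambda_1$ should be read as the first nonzero eigenvalue of $\Delta_f$, with eigenfunction $u$ (nonconstant) satisfying $\Delta_f u = -\lambda_1 u$ and normalized by $\int_M u\, e^{-f}\,dV_g = 0$. The starting point is the weighted Bochner formula
\[
\tfrac{1}{2}\Delta_f|\nabla u|^2 = |\He u|^2 + \langle \nabla u,\nabla \Delta_f u\rangle + Ric_f(\nabla u,\nabla u),
\]
where $Ric_f = Ric + \He f$ is precisely the $\infty$-Bakry–Émery tensor. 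Deriving this is the only genuine computation: writing $\Delta u = \Delta_f u + \langle\nabla f,\nabla u\rangle$ in the classical Bochner identity produces a cross term $\He u(\nabla f,\nabla u)$ from $\langle\nabla u,\nabla\langle\nabla f,\nabla u\rangle\rangle$, while converting $\tfrac12\Delta|\nabla u|^2$ into $\tfrac12\Delta_f|\nabla u|^2$ subtracts $\He u(\nabla f,\nabla u)$; these cancel exactly, and the surviving $\He f(\nabla u,\nabla u)$ combines with $Ric(\nabla u,\nabla u)$ to give $Ric_f(\nabla u,\nabla u)$.

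Next I would integrate this identity against $e^{-f}\,dV_g$. Because $\Delta_f$ is self-adjoint with respect to the weighted measure, the left-hand side integrates to zero, and the eigenvalue equation gives $\langle\nabla u,\nabla\Delta_f u\rangle = -\lambda_1|\nabla u|^2$. Using the weighted integration-by-parts identity $\int_M|\nabla u|^2 e^{-f}\,dV_g = -\int_M u\,\Delta_f u\, e^{-f}\,dV_g = \lambda_1\int_M u^2 e^{-f}\,dV_g$ (which in particular shows this quantity is strictly positive, as $u$ is nonconstant), I arrive at
\[
0 = \int_M |\He u|^2 e^{-f}\,dV_g - \lambda_1\int_M|\nabla u|^2 e^{-f}\,dV_g + \int_M Ric_f(\nabla u,\nabla u)\,e^{-f}\,dV_g.
\]

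Finally I would invoke the hypothesis $Ric_f \geq (n-1)\sigma$ to bound the last term below by $(n-1)\sigma\int_M|\nabla u|^2 e^{-f}\,dV_g$ and simply discard the nonnegative Hessian term, obtaining $\lambda_1\int_M|\nabla u|^2 e^{-f}\,dV_g \geq (n-1)\sigma\int_M|\nabla u|^2 e^{-f}\,dV_g$ and hence $\lambda_1\geq (n-1)\sigma$ after dividing by the positive factor. The point deserving care — and the reason one does \emph{not} recover the sharper classical Lichnerowicz constant $n\sigma$ here — is the handling of the Hessian term: in the unweighted setting one keeps it via $|\He u|^2\geq \tfrac{1}{n}(\Delta u)^2$, but in the weighted setting the analogue reads $\tfrac{1}{n}\big(\Delta_f u + \langle\nabla f,\nabla u\rangle\big)^2$, whose cross term involving $\nabla f$ is uncontrolled, so discarding $|\He u|^2$ is the honest step and it is exactly this that costs the factor $\tfrac{n}{n-1}$ and yields the stated bound $(n-1)\sigma$. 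For the noncompact complete case one replaces the genuine eigenfunction by the variational characterization of the bottom of the $L^2_f$-spectrum, but the Bochner mechanism is identical.
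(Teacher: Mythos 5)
Your proof is correct, but note that the paper itself contains no proof of this lemma to compare against: it is quoted verbatim from Wei--Wylie \cite{WW}, and the authors use it as a black box. Your argument is, in essence, the standard Bakry--\'Emery/Lichnerowicz proof that the cited reference relies on: the weighted Bochner formula $\tfrac12\Delta_f|\nabla u|^2=|\mathrm{Hess}\,u|^2+\langle\nabla u,\nabla\Delta_f u\rangle+Ric_f(\nabla u,\nabla u)$ (your cancellation of the two $\mathrm{Hess}\,u(\nabla f,\nabla u)$ cross terms is exactly right), integration against $e^{-f}dV_g$, and discarding the Hessian term. Two points in your write-up deserve credit because they resolve genuine ambiguities in the paper's statement: first, you correctly read $\lambda_1$ as the first nonzero eigenvalue of the \emph{drift} Laplacian $\Delta_f$ (the statement says ``the Laplacian,'' but the application in the proof of Theorem \ref{ThmC} is a weighted Poincar\'e inequality for $u=|W^+|-t|W^-|$ with $\int_M u\,e^{-f}dV_g=0$, so this is the only reading that makes the paper's argument work); second, your remark that the constant is the Bakry--\'Emery constant itself --- the factor $(n-1)\sigma$ is purely notational and no dimensional improvement via $|\mathrm{Hess}\,u|^2\geq\tfrac1n(\Delta u)^2$ is available --- is exactly why the paper's parenthetical ``(not optimal for compact manifolds)'' appears. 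The one soft spot is the complete noncompact case, which you defer to ``the variational characterization of the bottom of the spectrum'': there one must additionally justify finiteness of the weighted volume, density of compactly supported test functions, and the integrations by parts (for shrinking solitons these follow from the Cao--Zhou potential estimates \eqref{eqfbeh}), so this is a gap in generality, though a harmless one here since the paper only invokes the lemma for compact solitons.
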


For the reader's convenience, we restate Theorem \ref{ThmC} below.

\begin{theorem}[Theorem \ref{ThmC}]
    Let $(M^4,\,g,\,f)$ be a $4$-dimensional compact gradient shrinking Ricci soliton satisfying \eqref{grs}. Suppose that $\overline{K} \geq \varepsilon $ and $S \geq \delta$ such that $\frac{21}{2}<2 \delta+12 \varepsilon$. Then, for either $|W^+|$ or $|W^-|$, the following inequality holds:
    \begin{equation}\label{4_III_Gap}
        \int_M|W^\pm|^2 e^{-f}\,dV_g \leq \alpha\int_M S^2 e^{-f}\,dV_g,
    \end{equation} where $\alpha =\frac{4(1-\varepsilon)}{\delta}-\frac{2}{3}-\left(1-\varepsilon\right)+ \frac{\delta}{6}$.
\end{theorem}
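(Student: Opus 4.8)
The plan is to start from the pointwise Weitzenböck-type inequality \eqref{3_WIdentity_g} for $|W^+|$ (and its analogue for $|W^-|$), but this time integrate against the weighted measure $e^{-f}\,dV_g$ rather than $dV_g$. Integrating \eqref{3_WIdentity_g} with weight $e^{-f}$ causes the drift term $\tfrac12\nabla_{\nabla f}|W^+|^2$ to combine with the Laplacian into the full weighted Laplacian $\Delta_f$, which integrates to zero against $e^{-f}\,dV_g$. This is the key structural advantage of the weighted setting: the divergence-type term disappears cleanly. I would thus obtain
\begin{equation*}
0 \geq \int_M\Big(|\nabla W^+|^2 + 2|W^+|^2 - \sqrt{6}|W^+|^3 - \tfrac{1}{\sqrt{6}}|\mathring{Ric}|^2|W^+|\Big)e^{-f}\,dV_g.
\end{equation*}

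Next I would control the cubic and Ricci terms using the pointwise bounds from \eqref{Kbarboundineq}. The last inequality there, $\tfrac{1}{\sqrt6}|W^+|\le 2(1-\varepsilon)-\tfrac{S}{3}$, lets me replace $\sqrt{6}|W^+|^3 = 6|W^+|^2\cdot\tfrac{1}{\sqrt6}|W^+|$ by a multiple of $|W^+|^2$, and similarly handle the $|\mathring{Ric}|^2|W^+|$ term; this turns the right-hand side into a weighted integral of $|\nabla W^+|^2$ plus expressions linear in $|W^+|^2$ and $|\mathring{Ric}|^2$ with coefficients involving $S$, $\varepsilon$, $\delta$. The gradient term $\int_M|\nabla W^+|^2 e^{-f}\,dV_g$ is then bounded below using the weighted Lichnerowicz eigenvalue estimate of Lemma~\ref{lemWW}: since the soliton equation \eqref{grs} gives $Ric_f = g$, i.e. $Ric_f \geq (n-1)\sigma$ with $\sigma = \tfrac13$ in dimension four, the first (drifted) eigenvalue is bounded below, yielding a weighted Poincaré inequality that converts $\int|\nabla W^+|^2 e^{-f}$ into a lower bound by a multiple of $\int |W^+|^2 e^{-f}$ (after subtracting the weighted mean). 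The condition $\tfrac{21}{2}<2\delta+12\varepsilon$ is precisely what is needed for the resulting coefficient of $\int|W^+|^2 e^{-f}$ to have the right sign.

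The final step is bookkeeping: I would collect all terms, use $|\mathring{Ric}|^2 = |Ric|^2-\tfrac{S^2}{4}$ from \eqref{Tricdef} together with the soliton identities of Lemma~\ref{GRSiden} to re-express $|\mathring{Ric}|^2$ (or its weighted integral) in terms of $S^2$, and then solve the resulting linear inequality for $\int_M|W^+|^2 e^{-f}\,dV_g$ in terms of $\int_M S^2 e^{-f}\,dV_g$. Tracking the constants carefully should produce exactly the coefficient $\alpha = \tfrac{4(1-\varepsilon)}{\delta}-\tfrac23-(1-\varepsilon)+\tfrac{\delta}{6}$. Since both the derivation of \eqref{3_WIdentity_g} and the subsequent estimates apply verbatim to $W^-$, the same bound holds for $|W^-|$ (indeed for whichever of the two one chooses), giving the stated $|W^\pm|$ conclusion.

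I expect the main obstacle to be the correct application of the weighted eigenvalue estimate to the gradient term. The estimate in Lemma~\ref{lemWW} applies to scalar functions and yields a Poincaré-type inequality only after subtracting the weighted average, so I must be careful that $|W^+|$ (not $|W^+|^2$) is the right function to feed in, and I must reconcile $\int|\nabla|W^+||^2$ with $\int|\nabla W^+|^2$ via Kato's inequality (Lemma~\ref{Katoinequality}) in the correct direction. Ensuring that the eigenvalue lower bound $\sigma=\tfrac13$ combines with the coefficients from \eqref{Kbarboundineq} to give precisely the threshold $\tfrac{21}{2}<2\delta+12\varepsilon$, rather than a slightly different constant, will require the most delicate arithmetic.
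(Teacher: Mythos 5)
Your opening steps do coincide with the paper's: integrating the Weitzenb\"ock inequality against $e^{-f}\,dV_g$ annihilates the $\Delta_f$-term, and inserting the bounds \eqref{Kbarboundineq} together with the weighted identity $\int_M|\mathring{Ric}|^2e^{-f}\,dV_g=\int_M\big(S-\tfrac{S^2}{4}\big)e^{-f}\,dV_g$ gives
\begin{equation*}
0 \;\geq\; \int_M\Big(|\nabla W^\pm|^2+(2\delta+12\varepsilon-10)|W^\pm|^2-\big(2(1-\varepsilon)-\tfrac{\delta}{3}\big)\big(S-\tfrac{S^2}{4}\big)\Big)e^{-f}\,dV_g.
\end{equation*}
But the obstacle you flag at the end is fatal to the rest of your plan, and your proposed fix does not resolve it. The weighted Poincar\'e inequality coming from Lemma~\ref{lemWW} applies only to functions of zero weighted mean; applied directly to $|W^+|$ it yields
\begin{equation*}
\int_M|\nabla|W^+||^2e^{-f}\,dV_g \;\geq\; \lambda_1\left(\int_M|W^+|^2e^{-f}\,dV_g-\frac{\big(\int_M|W^+|\,e^{-f}\,dV_g\big)^2}{\int_M e^{-f}\,dV_g}\right),
\end{equation*}
and the subtracted mean term is an uncontrolled loss: nothing in the hypotheses lets you absorb it into $\int_M S^2e^{-f}\,dV_g$. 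The paper's proof circumvents exactly this point by Yang's trick: it argues by contradiction, assuming \emph{both} $|W^+|$ and $|W^-|$ violate \eqref{4_III_Gap}, chooses $t>0$ so that $|W^+|-t|W^-|$ has zero weighted mean, and applies the spectral gap ($\lambda_1\geq 1$) to that combination, using Lemma~\ref{Katoinequality} and the parallelogram inequality to pass from $|\nabla W^\pm|^2$ to $|\nabla(|W^+|-t|W^-|)|^2$. This produces a quadratic form in $t$ whose cross term $-t\int_M|W^+||W^-|e^{-f}\,dV_g$ must be dominated by a discriminant analysis, and that is precisely where the hypothesis $\tfrac{21}{2}<2\delta+12\varepsilon$ enters: one needs $1-4(2\delta+12\varepsilon-10)^2<0$, i.e.\ $2\delta+12\varepsilon-10>\tfrac12$. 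Your guess that the threshold merely makes the coefficient of $\int|W^+|^2e^{-f}$ positive is wrong --- that would only require $2\delta+12\varepsilon>10$; the extra $\tfrac12$ has no explanation in a one-tensor argument because it comes from the $W^+$--$W^-$ cross term.

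Relatedly, you misread the quantifier in the statement: ``for either $|W^+|$ or $|W^-|$'' means the inequality holds for \emph{at least one} of the two, which is all a proof by contradiction of this type can deliver. Your concluding claim that the bound holds ``for whichever of the two one chooses'' is a strictly stronger assertion, and neither your outline nor the paper's argument establishes it.
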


\begin{proof}
Initially, assume by contradiction that both $|W^+|$ and $|W^-|$ satisfy 
    \begin{equation}\label{4_T1_Gap}
        \int_M|W^\pm|^2e^{-f}dV_g > \alpha\int_MS^2e^{-f}dV_g. 
    \end{equation}

At the same time, it follows from (\ref{3_lapW+}) that 
 \begin{equation}\label{wwww12}
            \frac{1}{2}\Delta_f\left|\mathrm{W}^{+}\right|^2 \geq  \left|\nabla \mathrm{W}^{+}\right|^2 +2\left|\mathrm{W}^{+}\right|^2-\sqrt{6}| \mathrm{W}^{+}|^3-\frac{1}{\sqrt{6}}|\mathring{Ric}|^2|W^+|.
        \end{equation} Moreover, by using \eqref{Kbarboundineq}, one sees that     $$
        \frac{1}{\sqrt{6}}(|W^+|+|W^-|) \leq 2(1-\varepsilon) - \frac{S}{3} \leq  2(1-\varepsilon) - \frac{\delta}{3}.
    $$ Plugging this into (\ref{wwww12}) yields

    \begin{eqnarray}
    \label{klp014gh}
\frac{1}{2} \Delta_f |W^+|^2 &\geq & |\nabla W^+|^2 +2|W^+|^2-12(1-\varepsilon)|W^+|^2+2\delta|W^+|^2\nonumber\\&&-\left(2(1-\varepsilon)-\frac{\delta}{3}\right)|\mathring{Ric}|^2.
\end{eqnarray} 

On the other hand, on integrating (\ref{trRic_RiclapS}), we get 

\begin{equation}\label{trless-ricci-int}
    \begin{aligned}
    \int_M |\mathring{Ric}|^2e^{-f}dV_g =  \int_M Se^{-f}dV_g - \int_M\frac{1}{4}S^2e^{-f}dV_g.
\end{aligned}
\end{equation} Hence, upon integrating (\ref{klp014gh}) with the weighted volume, we use (\ref{trless-ricci-int}) to infer

\begin{eqnarray}
\label{4_pre_Yang1}
    0 \geq \int_M\left( |\nabla W^+|^2 + (2\delta+12\varepsilon -10)|W^+|^2 - \left(2(1-\varepsilon)-\frac{\delta}{3}\right)\left(S-\frac{S^2}{4}\right)\right)e^{-f} dV_g. \nonumber\\
\end{eqnarray} Similarly, we obtain inequality (\ref{4_pre_Yang1}) for $W^-.$ From this point on, we follow the approach outlined in \cite{yang2000rigidity}. Specifically, by assuming that $(M^4,\,g)$ is not half-conformally flat, we have 
\begin{equation}
\int_M\left|W^{+}\right|e^{-f} dV_g>0\,\,\, \text { and }\,\,\,\int_M\left|W^{-}\right| e^{-f}dV_g>0.
\end{equation} Thus, there exists a positive constant $t>0$ such that 
$$\int_M\left(\left|W^{+}\right|-t\left|W^{-}\right|\right) e^{-f}\,dV_g=0.
$$ 

Next, the Kato inequality guarantees that

$$\left|\nabla W^{+}\right|^2+t^2\left|\nabla W^{-}\right|^2 \geq |\nabla| W^{+}| |^2+t^2 | \nabla |W^{-}||^2.$$ Moreover, note that

\begin{eqnarray*}
    \left(|\nabla |W^+||^2 + t^2|\nabla |W^- ||^2 \right) &=& \frac{1}{2} \Big(|\nabla \left(|W^+|-t|W^- |\right)|^2 + |\nabla \left(|W^+| + t |W^- |\right)|^2 \Big)\nonumber\\ &\geq & \frac{1}{2}|\nabla \left( |W^+| - t |W^-|\right)|^2. 
\end{eqnarray*} Combining these previous expressions, one obtains that

\begin{eqnarray*}
    \int_M \left(\left|\nabla W^+\right|^2+t^2\left|\nabla W^{-}\right|^2 \right) e^{-f}dV_g & \geq & \frac{1}{2}\int_M \left|\nabla\left(\left|W^{+}\right|-t | W^{-}|\right)\right|^2 e^{-f}dV_g \nonumber\\
    & \geq &  \frac{\lambda_1}{2}\int_M \left|\left(\left|W^{+}\right|-t | W^{-}|\right)\right|^2 e^{-f}dV_g.\nonumber
\end{eqnarray*} Now, for a solution to \eqref{grs} in dimension four, we use  Lemma \ref{lemWW} with $\sigma =1/3$ in order to obtain $\lambda_1 \geq 1$ (not optimal for compact manifolds, see \cite{WW}). In this situation, one obtains that

\begin{equation}\label{Yang_gradient}
    \int_M \left(\left|\nabla W^+\right|^2+t^2\left|\nabla W^{-}\right|^2 \right) e^{-f}\,dV_g  \geq \frac{1}{2}\int_M \left|\left(\left|W^{+}\right|-t | W^{-}|\right)\right|^2 e^{-f}\,dV_g.
\end{equation} Substituting \eqref{4_pre_Yang1} into \eqref{Yang_gradient} gives

\begin{eqnarray*}
    0 & \geq & \int_M \left[\left( \frac{1}{2}+ (2\delta+12\varepsilon -10)\right)|W^+|^2-\left(2(1-\varepsilon)-\frac{\delta}{3}\right)\left(S-\frac{S^2}{4}\right)\right]e^{-f}dV_g \nonumber \\
    & - &t \int_M |W^+||W^-|e^{-f}dV_g\nonumber \\
    & + & t^2  \int_M \left[\left( \frac{1}{2}+ (2\delta+12\varepsilon -10)\right)|W^-|^2-\left(2(1-\varepsilon)-\frac{\delta}{3}\right)\left(S-\frac{S^2}{4}\right)\right]e^{-f} dV_g,
\end{eqnarray*} and by \eqref{4_T1_Gap}, one deduces that 

\begin{eqnarray}
\label{III_yang_contradiction}
    0 & \geq & \int_M \left[(2\delta+12\varepsilon -10)|W^+|^2+ \phi(S)\right]e^{-f}dV_g \nonumber \\
    && -t\int_M |W^+||W^-|e^{-f}dV_g\nonumber \\
    && + t^2  \int_M \left[(2\delta+12\varepsilon -10)|W^-|^2+\phi(S)\right]e^{-f}dV_g,
\end{eqnarray} where 
\begin{equation*}
    \phi(S) := \left( \frac{\alpha}{2}+ \frac{(1-\varepsilon)}{2}-\frac{\delta}{12}\right)S^2 -\left(2(1-\varepsilon)-\frac{\delta}{3}\right)S.
\end{equation*}

Observe that the right-hand side of \eqref{III_yang_contradiction} defines a quadratic form $Q(t)$ in $t.$ In particular, its discriminant is given by

\begin{equation}\label{def_discriminant}
\begin{aligned}
        \Psi = |W^+|^2|W^-|^2 & - 4(2\delta+12\varepsilon -10)^2|W^+|^2|W^-|^2  \\
        & - 4(2\delta+12\varepsilon -10)|W^-|^2\phi(S) \\
        & - 4(2\delta+12\varepsilon -10)|W^+|^2\phi(S) \\
        & - 4\phi(S)^2.
\end{aligned}
\end{equation} Hence, the quadratic form \eqref{III_yang_contradiction} is positive definite whenever $$1-4(2\delta+12\varepsilon -10)^2<0,$$ which holds if $\frac{21}{2}<2 \delta+12 \varepsilon.$ Moreover, when $t=0$, we observe that $Q(0)>0$, since the coefficient of $|W^+|^2$ satisfies $2\delta+12 \varepsilon-10>\frac{1}{2}$ and $\phi(S)>0$ for $S>\delta.$  Therefore, the right-hand side of \eqref{III_yang_contradiction} is positive for all $t>0$, leading to a contradiction. Consequently, the reverse inequality in \eqref{4_T1_Gap} must hold. So, the proof is completed.

\end{proof}

\subsection{Hitchin-Thorpe type inequality} Now, we shall present the proof of Theo\-rem \ref{Thm_HT}, which we restate below for the reader's convenience.

\begin{theorem}[Theorem \ref{Thm_HT}]
     Let $(M^4,\,g,\,f)$ be a $4$-dimensional oriented compact gradient shrinking Ricci soliton satisfying \eqref{grs} with $\overline{K} \geq \varepsilon = 0.184.$ Suppose that 
    \begin{equation}\label{6_Asump}
    \int_M |\delta W^{+}|^2\,dV_{g} \leq \int_M \frac{S}{6} |W^{+}|^2\, dV_{g}.
    \end{equation} Then the following inequality holds: 
    \begin{equation}
    \chi(M)>\frac{3}{2}\,|\tau(M)|.
    \end{equation}
\end{theorem}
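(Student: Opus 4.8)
The plan is to convert the topological quantity into a curvature integral via the Gauss--Bonnet--Chern formula \eqref{6_HTest3} and Hirzebruch's signature formula \eqref{6_HTest311}, and then to prove positivity. Writing $c=1/0.4613\approx 2.168$, the combination $\chi(M)-c\,\tau(M)$ becomes
\[
24\pi^{2}\bigl(\chi(M)-c\,\tau(M)\bigr)=\int_{M}\Bigl[(3-2c)|W^{+}|^{2}+(3+2c)|W^{-}|^{2}+\tfrac{1}{8}S^{2}-\tfrac{3}{2}|\mathring{Ric}|^{2}\Bigr]\,dV_{g}.
\]
We may assume $\tau(M)>0$, since for $\tau(M)\le 0$ the desired inequality is immediate from Berger's inequality $\chi(M)>|\tau(M)|$. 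I would then eliminate $|\mathring{Ric}|^{2}$ using the integrated soliton identity $\int_{M}|\mathring{Ric}|^{2}\,dV_{g}=\tfrac14\int_{M}S^{2}\,dV_{g}-4\,\mathrm{Vol}(M)$, derived exactly as in \eqref{intricball}, and discard the nonnegative term $(3+2c)|W^{-}|^{2}$. This reduces the statement to the single inequality
\[
(3-2c)\int_{M}|W^{+}|^{2}\,dV_{g}-\tfrac14\int_{M}S^{2}\,dV_{g}+6\,\mathrm{Vol}(M)>0.
\]

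Two of these three terms are readily controlled. The scalar-curvature integral is bounded via $S\le 6(1-\varepsilon)$ from \eqref{Kbarboundineq}, giving $\int_{M}S^{2}\,dV_{g}\le 36(1-\varepsilon)^{2}\,\mathrm{Vol}(M)$; with $\varepsilon=0.186$ the combination $-\tfrac14\int_{M}S^{2}+6\,\mathrm{Vol}(M)$ is a (small) positive multiple of $\mathrm{Vol}(M)$. The delicate point is the first term: since $3-2c<0$, I need a strong \emph{upper} bound on $\int_{M}|W^{+}|^{2}$. The starting point is the unweighted Weitzenb\"ock inequality \eqref{4_Weizenbock_unormalised_int}, into which I substitute the pointwise estimates from \eqref{Kbarboundineq} (namely $\sqrt{6}|W^{+}|^{3}\le(12(1-\varepsilon)-2S)|W^{+}|^{2}$ and $\tfrac{1}{\sqrt6}|\mathring{Ric}|^{2}|W^{+}|\le(2(1-\varepsilon)-\tfrac{S}{3})|\mathring{Ric}|^{2}$) to obtain
\[
\int_{M}\Bigl[|\nabla W^{+}|^{2}+\bigl(\tfrac{5}{2}S-12(1-\varepsilon)\bigr)|W^{+}|^{2}\Bigr]\,dV_{g}\le\int_{M}\bigl(2(1-\varepsilon)-\tfrac{S}{3}\bigr)|\mathring{Ric}|^{2}\,dV_{g}.
\]

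The hypothesis \eqref{6_Asump} is what closes the argument, and extracting from it a usable upper bound on $\int_{M}|W^{+}|^{2}$ is the conceptual heart of the proof. Through the second-Bianchi / Bochner identity relating $\int_{M}|\nabla W^{+}|^{2}$ and $\int_{M}|\delta W^{+}|^{2}$ on a closed oriented four-manifold (equivalently, through the weighted-Yamabe estimates of Cao--Tran \cite{CH}), the bound $\int_{M}|\delta W^{+}|^{2}\le\int_{M}\tfrac{S}{6}|W^{+}|^{2}$ should upgrade the previous display into a genuine upper bound for $\int_{M}|W^{+}|^{2}$ in terms of $\mathrm{Vol}(M)$ and $\int_{M}S^{2}$. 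Substituting this together with $\int_{M}S^{2}\le 36(1-\varepsilon)^{2}\,\mathrm{Vol}(M)$ into the reduced inequality, the theorem follows once the net coefficient of $\mathrm{Vol}(M)$ is shown to be strictly positive. The main obstacle I anticipate is the final numerical balancing: the baseline estimate alone only recovers the classical $\chi(M)>\tfrac32\tau(M)$ at $\varepsilon=0.186$, so the strength of the bound drawn from \eqref{6_Asump}, as well as the treatment of the mixed term $\int_{M}S|W^{+}|^{2}$ (which appears in place of the cleaner $\int_{M}|W^{+}|^{2}$), must be tracked carefully; the specific constants $\varepsilon=0.186$ and $0.4613$ are presumably the result of optimizing this balance.
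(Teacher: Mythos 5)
Your setup is sound as far as it goes: the Gauss--Bonnet--Chern/Hirzebruch algebra, the reduction to the case $\tau(M)>0$ via Berger's inequality, and the elimination of $|\mathring{Ric}|^2$ through \eqref{intricball} are all correct, and they yield the valid reduction
\begin{equation*}
(3-2c)\int_M|W^+|^2\,dV_g-\tfrac14\int_M S^2\,dV_g+6\,\mathrm{Vol}(M)>0,\qquad c=\tfrac{1}{0.4613}.
\end{equation*}
However, the step you yourself call the ``conceptual heart of the proof'' is a genuine gap, and it cannot be filled in the form you propose. Since $3-2c<0$, you need an \emph{upper} bound on $\int_M|W^+|^2\,dV_g$; and since the hypotheses only give $S\leq 6(1-\varepsilon)$, so that $\int_M S^2\,dV_g$ may be as large as $36(1-\varepsilon)^2\mathrm{Vol}(M)\approx 23.85\,\mathrm{Vol}(M)$, the upper bound you need is of order $0.03\,\mathrm{Vol}(M)$. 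Nothing in the hypotheses forces $W^+$ to be that small: assumption \eqref{6_Asump} holds automatically whenever $\delta W^+=0$, regardless of how large $|W^+|$ is, so it cannot produce smallness of $\int_M|W^+|^2\,dV_g$. In fact the paper uses \eqref{6_Asump} in exactly the opposite direction: via \cite[Theorem 4.1 and Remark 4.1]{CH} (a conformally invariant, Gursky-type argument through the weighted Yamabe functional) it yields the \emph{lower}, topological bound \eqref{6_HalfWeylLower}, namely $\int_M|W^+|^2\,dV_g>\frac{4}{11}\pi^2\left(2\chi(M)+3\tau(M)\right)$. Your intermediate Weitzenb\"ock display is also unusable for your purpose: its $|W^+|^2$-coefficient $\frac{5S}{2}-12(1-\varepsilon)$ has no favorable sign here, because Theorem \ref{Thm_HT}, unlike Theorems \ref{ThmB}--\ref{ThmC}, assumes no lower bound $S\geq\delta$; hence that inequality cannot be solved for $\int_M|W^+|^2\,dV_g$.

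The paper's actual mechanism is different, and it explains why your very first simplification loses too much. The paper combines (i) an upper bound $8\pi^2\chi(M)<\psi(\varepsilon)\,\mathrm{Vol}(M)$, coming from the pointwise bound on $|W^+|^2+|W^-|^2$ in \eqref{Kbarboundineq} together with \eqref{intricball}, with (ii) lower bounds on $2\chi(M)\pm 3\tau(M)$ in terms of $\mathrm{Vol}(M)$, obtained by inserting \eqref{6_HalfWeylLower} (and its orientation-reversed analogue for $W^-$) into the signature identity \eqref{kjl150okp}; dividing (ii) by (i) produces the two-sided pinching of $\tau(M)/\chi(M)$, and the theorem's conclusion $\tau(M)<0.4613\,\chi(M)$ comes precisely from the $W^-$ half of this argument. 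By discarding the $(3+2c)|W^-|^2$ term at the outset, you threw away exactly the term whose hypothesis-driven \emph{lower} bound carries the proof; that is why your route dead-ends into needing an analytic upper bound that the hypotheses do not supply. To salvage your framework you would have to keep both $|W^\pm|^2$ terms and exploit the topological lower bounds of \cite{CH}, rather than seek upper bounds from the Weitzenb\"ock formula.
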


\begin{proof}
To begin with, we use (\ref{intricball}) to infer

\begin{equation}\label{6_HTest1}
    \int_M |Ric|^2 dV_g = \int_M \frac{S^2}{2}dV_g - 4 Vol(M).
\end{equation}  Substituting \eqref{6_HTest1} and \eqref{Tricdef} into \eqref{6_HTest3} yields
\begin{equation}\label{6_HTest5}
    8\pi^2 \chi(M) =\int_M \left(|W^{+}|^2+|W^{-}|^2\right) dV_g+2 Vol(M)-\frac{1}{12} \int_{M} S^2 dV_g.
\end{equation} 

On the other hand, it follows from (\ref{Kbarboundineq}) that a lower bound on modified sectional curvature $\overline{K} \geq \varepsilon$ yields 
\begin{equation}\label{6chi_pre}
    |W^+|^2+|W^-|^2 \leq 24(1-2 \varepsilon+\varepsilon^2) + \frac{2}{3}S^2 - 8(1-\varepsilon)S.
\end{equation} Upon integrating \eqref{6chi_pre}, we use (\ref{6_HTest5}) in order to obtain
\begin{equation}
8 \pi^2 \chi(M)<\frac{7}{12} \int_M \mathrm{~S}^2 dV_g+2 \left(12 \varepsilon^2-8 \varepsilon-3\right) Vol(M),
\end{equation} where we also used that $$\int_M S \,dV_g = 4Vol(M),$$ which follows from (\ref{traceGRS}). Besides, the scalar curvature upper bound, resulting from the lower bound on the modified sectional curvature, implies that 
\begin{equation}\label{6_HTest6}
8 \pi^2 \chi(M)<\underbrace{(21(1-\varepsilon)^2+2\left(12 \varepsilon^2-8 \varepsilon-3\right))}_{\psi(\varepsilon)}Vol(M).
\end{equation}

Returning now to \eqref{6_HTest5}, we use (\ref{6_HTest311}) to obtain 
\begin{eqnarray}
\label{kjl150okp}
2 \chi(M)+3 \tau(M) &= &  \frac{1}{2 \pi^2} \int_M  |W^{+}|^2 dV_g+\frac{1}{2 \pi^2} Vol(M)\nonumber\\&&-\frac{1}{48 \pi^2} \int_M S^2 dV_g.
\end{eqnarray} Next, by using \eqref{6_Asump}, we apply  \cite[Theorem 4.1 and Remark 4.1]{CH} to deduce

\begin{equation}\label{6_HalfWeylLower}
\int_M|W^+|^2 dV_g> \frac{4}{11}\pi^2 (2\chi(M) + 3\tau(M)). 
\end{equation} Plugging this into (\ref{kjl150okp}), one sees that  
$$
\begin{aligned}
        2\chi(M) + 3 \tau(M) =& \frac{1}{2\pi^2}\int_M|W^+|^2dV_g + \frac{1}{2\pi^2}Vol(M) -\frac{1} {48\pi^2}\int_MS^2dV_g \\
        & \geq \frac{2}{11}(2\chi(M)+3 \tau(M)) + \frac{1}{2\pi^2}Vol(M) - \frac{3}{4 \pi^2} (1-\varepsilon)^2 Vol(M) \\
        &= \frac{2}{11}(2\chi(M)+3 \tau(M)) + \underbrace{\left( \frac{1}{2\pi^2} - \frac{3}{4 \pi^2}(1-\varepsilon)^2\right)}_{\gamma(\varepsilon)}Vol(M). 
\end{aligned}
$$
From now on, we restrict our choice of $\varepsilon< 1/3$ so that $\gamma(\varepsilon)>0$ and $\psi(\varepsilon)>0$. By using \eqref{6_HTest6}, one obtains that  
\begin{equation}\label{6_HTineq1}
    \left(\frac{18}{27}- \frac{\gamma(\varepsilon)}{\psi(\varepsilon)}\frac{88 \pi^2}{27}\right)\chi(M) > - \tau(M).
\end{equation}

On the other hand, we reverse the orientation so that  \eqref{6_HalfWeylLower} applies to $|W^-|$. Proceeding as in the previous case, we then obtain
\begin{equation}
    2 \chi(M) - 3 \tau(M) = \frac{1}{2 \pi^2} \int_M |W^-|^2dV_g - \frac{1}{48 \pi^2}\int_MS^2dV_g + \frac{1}{2 \pi^2} Vol(M).
\end{equation} Besides, by using \eqref{6_HalfWeylLower}, \eqref{Kbarboundineq} and \eqref{6_HTest6}, we get 

\begin{equation}\label{6_HTineq2}
    \left(\frac{18}{39} - \frac{88 \pi^2}{39} \frac{\gamma(\varepsilon)}{\psi(\varepsilon)} \right) \chi(M)> \tau(M). 
\end{equation} Combining \eqref{6_HTineq1}, \eqref{6_HTineq2} and let $\varepsilon = 0.184$, we then obtain 
\begin{equation*}
    -0.6663\chi(M)<\tau(M) < 0.4613\chi(M),
\end{equation*} In particular, we have 
\begin{equation*}
    |\tau(M)| \leq 0.6663 \chi(M),
\end{equation*}
and hence
\[
\chi(M)>
\frac{1}{0.6663}\,|\tau(M)|
>\frac{3}{2}\,|\tau(M)|.
\]
This finishes the proof of the theorem. 
\end{proof}

\begin{bibdiv}
\begin{biblist}

\bib{BakryEmery}{article}{
title={Diﬀusions hypercontractives.},
author={Bakry, Dominique},
author={\'Emery, Michel},
journal={S\'eminaire de probabilit\'es, XIX, 1983/84, Lecture
Notes in Math.},
volume={1123},
date={1985},
pages={177--206},
}

\bib{bamler2020structure}{article}{
	title={Structure theory of non-collapsed limits of Ricci flows},
	author={Bamler, Richard},
	journal={ArXiv:2009.03243},
volume={}, 
date={2020}, 
number={}, 
pages={},
}

\bib{BCCD22KahlerRicci}{article}{
    AUTHOR = {Bamler, Richard},
    author={Cifarelli, Charles},
    author={Conlon, Ronan J.},
   author={Deruelle, Alix},
     TITLE = {A new complete two-dimensional shrinking gradient
              {K}\"ahler-{R}icci soliton},
   JOURNAL = {Geom. Funct. Anal.},
  FJOURNAL = {Geometric and Functional Analysis},
    VOLUME = {34},
      YEAR = {2024},
    NUMBER = {2},
     PAGES = {377--392},
      ISSN = {1016-443X,1420-8970},
   MRCLASS = {53C55 (32Q15 53C25)},
}

\bib{Berger1}{article}{
 AUTHOR = {Berger, Marcel},
 title={Sur quelques vari\'et\'es d'Einstein compactes},
 journal={Ann. Mat. Pura Appl.},
 volume={53},
 year={1961},
 pages={89--95},
 review={\MR{0130659}},
}

\bib{Besse}{article}{
 AUTHOR = {Besse, Arthur L.},
     TITLE = {Einstein manifolds},
    SERIES = {Classics in Mathematics},
      NOTE = {Reprint of the 1987 edition},
 PUBLISHER = {Springer-Verlag, Berlin},
      YEAR = {2008},
     PAGES = {xii+516},
      ISBN = {978-3-540-74120-6},
   MRCLASS = {53C25 (53-02)},
  MRNUMBER = {2371700},
}

\bib{caoALM11}{article}{
   author={Cao, Huai-Dong},
   title={Recent progress on Ricci solitons},
   conference={
      title={Recent advances in geometric analysis},
   },
   book={
      series={Adv. Lect. Math. (ALM)},
      volume={11},
      publisher={Int. Press, Somerville, MA},
   },
   date={2010},
   pages={1--38},
   review={\MR{2648937}},
}

\bib{CaoA}{article}{
   author={Cao, Huai-Dong},
   author={Chen, Bing-Long},
   author={Zhu, Xi-Ping},
   title={Recent developments on Hamilton's Ricci flow},
   conference={
      title={Surveys in Differential Geometry. Vol. XII. Geometric flows},
   },
   book={
      series={Surv. Differ. Geom.},
      volume={12},
      publisher={Int. Press, Somerville, MA},
   },
   date={2008},
   pages={47--112},
   review={\MR{2488948}},
}

\bib{CaoChen}{article}{
   author={Cao, Huai-Dong},
   author={Chen, Qiang},
   title={On Bach-flat gradient shrinking Ricci solitons},
   journal={Duke Math. J.},
   volume={162},
   date={2013},
   number={6},
   pages={1149--1169},
   issn={0012-7094},
   review={\MR{3053567}},
}

\bib{CRZ}{article}{
   author={Cao, Huai-Dong},
   author={Ribeiro Jr, Ernani},
   author={Zhou, Detang},
   title={Four-dimensional complete gradient shrinking Ricci solitons},
   journal={J. Reine Angew. Math.},
   volume={2021},
   date={2021},
   number={},
   pages={127-144},
   issn={},
   review={},
   }

\bib{CZ}{article}{
   author={Cao, Huai-Dong},
   author={Zhou, Detang},
   title={On complete gradient shrinking Ricci solitons},
   journal={J. Differential Geom.},
   volume={85},
   date={2010},
   number={2},
   pages={175--185},
   issn={0022-040X},
   review={\MR{2732975}},
}

\bib{CGT}{article}{
    AUTHOR = {Cao, Xiaodong},
    author={Gursky, Matthew J.},
    author={Tran, Hung},
     TITLE = {Curvature of the second kind and a conjecture of {N}ishikawa},
   JOURNAL = {Comment. Math. Helv.},
  FJOURNAL = {Commentarii Mathematici Helvetici. A Journal of the Swiss Mathematical Society},
    VOLUME = {98},
      YEAR = {2023},
    NUMBER = {1},
     PAGES = {195--216},
      ISSN = {0010-2571,1420-8946},
   MRCLASS = {53C21 (53C24)},
  MRNUMBER = {4592855},
}

\bib{CRT}{article}{
   author={Cao, Xiaodong},
   author={Ribeiro Jr, Ernani},
   author={Tran, Hung},
   title={Rigidity of four-dimensional Kähler-Ricci solitons},
   journal={Commun. Anal. Geom.},
   volume={34},
   date={2026},
   number={1},
   pages={89--114},
   issn={1019-8385},
}

\bib{CH}{article}{
   author={Cao, Xiaodong},
   author={Tran, Hung},
   title={The Weyl tensor of gradient Ricci solitons},
   journal={Geom. Topol.},
   volume={20},
   date={2016},
   number={1},
   pages={389--436},
   issn={1465-3060},
   review={\MR{3470717}},
}

\bib{CWZ}{article}{
   author={Cao, Xiaodong},
   author={Wang, Biao},
   author={Zhang, Zhou},
   title={On locally conformally flat gradient shrinking Ricci solitons},
   journal={Commun. Contemp. Math.},
   volume={13},
   date={2011},
   number={2},
   pages={269--282},
   issn={0219-1997},
   review={\MR{2794486}},
}

\bib{Catino}{article}{
   author={Catino, Giovanni},
   title={Complete gradient shrinking Ricci solitons with pinched curvature},
   journal={Math. Ann.},
   volume={355},
   date={2013},
   number={2},
   pages={629--635},
   issn={0025-5831},
   review={\MR{3010141}},
}

\bib{catinoAdv}{article}{
   author={Catino, Giovanni},
   title={Integral pinched shrinking Ricci solitons},
   journal={Adv. Math.},
   volume={303},
   date={2016},
   pages={279--294},
   issn={0001-8708},
   review={\MR{3552526}},
}

\bib{CMM}{article}{
   AUTHOR={Catino, Giovanni},
   AUTHOR={Mastrolia, Paolo},
   AUTHOR={Monticelli, Dario},
     TITLE = {Gradient {R}icci solitons with vanishing conditions on {W}eyl},
   JOURNAL = {J. Math. Pures Appl. (9)},
  FJOURNAL = {Journal de Math\'{e}matiques Pures et Appliqu\'{e}es. Neuvi\`eme S\'{e}rie},
    VOLUME = {108},
      YEAR = {2017},
    NUMBER = {1},
     PAGES = {1--13},
      ISSN = {0021-7824},
   MRCLASS = {53C20 (53C24 53C25)},
  MRNUMBER = {3660766},
       URL = {https://doi.org/10.1016/j.matpur.2016.10.007},
}

\bib{ChangGurskyYang}{article}{
  title        = {A conformally invariant sphere theorem in four dimensions},
  author       = {Chang, Sun-Yung A.}
  author={Gursky, Matthew J.},
  author={Yang, Paul C.},
  journal      = {Publ. Math. Inst. Hautes \'{E}tudes Sci.},
  year         = {2003},
  volume       = {98},
  pages        = {105--143},
}

\bib{Chen}{article}{
   author={Chen, Bing-Long},
   title={Strong uniqueness of the Ricci flow},
   journal={J. Differential Geom.},
   volume={82},
   date={2009},
   number={2},
   pages={363--382},
   issn={0022-040X},
   review={\MR{2520796}},
}

\bib{CW}{article}{
   author={Chen, Xiuxiong},
   author={Wang, Yuanqi},
   title={On four-dimensional anti-self-dual gradient Ricci solitons},
   journal={J. Geom. Anal.},
   volume={25},
   date={2015},
   number={2},
   pages={1335--1343},
   issn={1050-6926},
   review={\MR{3319974}},
}

\bib{CZ2021}{article}{
 author={Cheng, Xu}, author={Zhou, Detang}, 
 title={Rigidity of four-dimensional gradient shrinking Ricci soliton}, 
   journal={J. Reine Angew. Math.},
   volume={2023},
   date={2023},
   number={802},
   pages={255-274},
   issn={},
   review={},
   }

  \bib{CRZpams}{article}{ author={Cheng, Xu}, author={Ribeiro, Ernani, Jr}, author={Zhou, Detang}, title={On Euler characteristic and Hitchin-Thorpe inequality for four-dimensional compact Ricci solitons}, journal={Proc. Amer. Math. Soc.}, volume={10}, number={3}, date={2023}, pages={33-45}, issn={2330-1511}, 
  }

\bib{Chow}{article}{author={Chow, Bennet}, author={Lu, Peng}, author={Yang, Bo}, title={Lower bounds for the scalar curvatures of noncompact gradient Ricci solitons}, journal={C. R. Math. Acad. Sci. Paris}, volume={349}, date={2011}, number={23-24}, review={\MR{2861997}}, }

\bib{conlon2024classification}{article}{
    AUTHOR = {Conlon, Ronan J.},
    author={Deruelle, Alix},
    author={Sun, Song},
     TITLE = {Classification results for expanding and shrinking gradient
              {K}\"ahler-{R}icci solitons},
   JOURNAL = {Geom. Topol.},
  FJOURNAL = {Geometry \& Topology},
    VOLUME = {28},
      YEAR = {2024},
    NUMBER = {1},
     PAGES = {267--351},
      ISSN = {1465-3060,1364-0380},
   MRCLASS = {53C25 (53C55 53E30)},
  MRNUMBER = {4711837},
}

\bib{derd1}{article}{
   author={Derdzi\'{n}ski, Andrzej},
   title={Self-dual K\"{a}hler manifolds and Einstein manifolds of dimension
   four},
   journal={Compositio Math.},
   volume={49},
   date={1983},
   number={3},
   pages={405--433},
   issn={0010-437X},
   review={\MR{707181}},
}

\bib{Derd}{article}{
AUTHOR = {Derdzi\'{n}ski, Andrzej},
     TITLE = {A {M}yers-type theorem and compact {R}icci solitons},
   JOURNAL = {Proc. Amer. Math. Soc.},
  FJOURNAL = {Proceedings of the American Mathematical Society},
    VOLUME = {134},
      YEAR = {2006},
    NUMBER = {12},
     PAGES = {3645--3648},
      ISSN = {0002-9939},
   MRCLASS = {53C20 (53C21)},
  MRNUMBER = {2240678},
       DOI = {},
       URL = {},
}

\bib{ELM}{article}{
   author={Eminenti, Manolo},
   author={La Nave, Gabriele},
   author={Mantegazza, Carlo},
   title={Ricci solitons: the equation point of view},
   journal={Manuscripta Math.},
   volume={127},
   date={2008},
  number={3},
   pages={345--367},
 issn={0025-2611},
   review={\MR{2448435}},
}

\bib{Topping}{article}{
   author={Enders, Joerg},
   author={M\"{u}ller, Reto},
   author={Topping, Peter M.},
   title={On type-I singularities in Ricci flow},
   journal={Comm. Anal. Geom.},
   volume={19},
   date={2011},
   number={5},
   pages={905--922},
   issn={1019-8385},
   review={\MR{2886712}},
}

\bib{FLGR0}{article}{
   AUTHOR = {Fern\'{a}ndez-L\'{o}pez, Manuel}, author={Garc\'{\i}a-R\'{\i}o, Eduardo},
     TITLE = {A remark on compact {R}icci solitons},
   JOURNAL = {Math. Ann.},
  FJOURNAL = {Mathematische Annalen},
    VOLUME = {340},
      YEAR = {2008},
    NUMBER = {4},
     PAGES = {893--896},
      ISSN = {0025-5831},
   MRCLASS = {53C25 (53C20)},
  MRNUMBER = {2372742},
MRREVIEWER = {Carlo Mantegazza},
       DOI = {},
       URL = {},
}

\bib{Fl-Gr}{article}{
   author={Fern\'{a}ndez-L\'{o}pez, Manuel},
   author={Garc\'{\i}a-R\'{\i}o, Eduardo},
   title={On gradient Ricci solitons with constant scalar curvature},
   journal={Proc. Amer. Math. Soc.},
   volume={144},
   date={2016},
   number={},
   pages={369--378},
   issn={0002-9939},
   review={\MR{3415603}},
}

\bib{FLGR}{article}{
   author={Fern\'{a}ndez-L\'{o}pez, Manuel},
   author={Garc\'{\i}a-R\'{\i}o, Eduardo},
   title={Rigidity of shrinking Ricci solitons},
   journal={Math. Z.},
   volume={269},
   date={2011},
   number={1-2},
   pages={461--466},
   issn={0025-5874},
   review={\MR{2836079}},
}

\bib{MR2672426}{article}{
   author={Fern\'{a}ndez-L\'{o}pez, Manuel},
   author={Garc\'{\i}a-R\'{\i}o, Eduardo},
   title={Diameter bounds and Hitchin-Thorpe inequalities for compact Ricci
   solitons},
   journal={Q. J. Math.},
   volume={61},
   date={2010},
   number={3},
   pages={319--327},
   issn={0033-5606},
   review={\MR{2672426}},
   doi={},
}

\bib{LeBrun}{article}{
   author={Gursky, Matthew},
   author={LeBrun, Claude},
   title={On Einstein manifolds of positive sectional curvature},
   journal={Ann. Glob. Anal. Geom.},
   volume={17},
   date={1999},
   number={},
   pages={315--328},
   issn={},
   review={},
}

\bib{Hamilton2}{article}{
   author={Hamilton, Richard S.},
   title={The formation of singularities in the Ricci flow},
   conference={
      title={Surveys in differential geometry, Vol. II},
      address={Cambridge, MA},
      date={1993},
   },
   book={
      publisher={Int. Press, Cambridge, MA},
   },
   date={1995},
   pages={7--136},
   review={\MR{1375255}},
}

\bib{Hitchin}{article}{
    AUTHOR = {Hitchin, Nigel},
     TITLE = {Compact four-dimensional {E}instein manifolds},
   JOURNAL = {J. Differential Geom.},
  FJOURNAL = {Journal of Differential Geom.},
    VOLUME = {9},
      YEAR = {1974},
     PAGES = {435--441},
      ISSN = {0022-040X},
   MRCLASS = {53C25},
  MRNUMBER = {350657},
       URL = {http://projecteuclid.org/euclid.jdg/1214432419},
}

\bib{Ivey}{article}{
   author={Ivey, Thomas},
   title={New examples of complete Ricci solitons},
   journal={Proc. Amer. Math. Soc.},
   volume={122},
   date={1994},
   number={1},
   pages={241--245},
   issn={0002-9939},
   review={\MR{1207538}},
}

\bib{KW}{article}{
   author={Kotschwar, Brett},
   author={Wang, Lu},
   title={Rigidity of asymptotically conical shrinking gradient Ricci solitons},
   journal={J. Differential Geom.},
   volume={100},
   date={2015},
   number={1},
   pages={55--108},
   review={\MR{3326574}},
}

\bib{LeBrun1}{article}{
AUTHOR = {LeBrun, Claude},
     TITLE = {Einstein manifolds, self-dual {W}eyl curvature, and
              conformally {K}\"ahler geometry},
   JOURNAL = {Math. Res. Lett.},
  FJOURNAL = {Mathematical Research Letters},
    VOLUME = {28},
      YEAR = {2021},
    NUMBER = {1},
     PAGES = {127--144},
      ISSN = {1073-2780,1945-001X},
}

\bib{LiWang}{article}{
   author={Li, Yu},
   author={Wang, Bing},
   title={On K\"ahler Ricci shrinker surfaces},
   journal={Acta Mathematica},
   volume={236},
   date={2026},
   number={1},
   pages={1--50},
   issn={0001-5962},
}

\bib{Li}{article}{
   author={Li, Xue-Mei},
   title={On extensions of Myers theorem},
   journal={Bull. London Math. Soc.},
   volume={27},
   date={1995},
   number={4},
   pages={392--396},
   issn={},
}

\bib{MR3128968}{article}{
   author={Ma, Li},
   title={Remarks on compact shrinking Ricci solitons of dimension four},
   language={English, with English and French summaries},
   journal={C. R. Math. Acad. Sci. Paris},
   volume={351},
   date={2013},
   number={21-22},
   pages={817--823},
   issn={1631-073X},
   review={\MR{3128968}},
   doi={},
}

\bib{MS}{article}{
   author={Munteanu, Ovidiu},
   author={Sesum, Natasa},
   title={On gradient Ricci solitons},
   journal={J. Geom. Anal.},
   volume={23},
   date={2013},
   number={2},
   pages={539--561},
   issn={1050-6926},
   review={\MR{3023848}},
}

\bib{MW}{article}{
   author={Munteanu, Ovidiu},
   author={Wang, Jiaping},
   title={Geometry of shrinking Ricci solitons},
   journal={Compos. Math.},
  volume={151},
   date={2015},
  number={12},
   pages={2273--2300},
   issn={0010-437X},
   review={\MR{3433887}},
}

\bib{MW2}{article}{
  author={Munteanu, Ovidiu},
  author={Wang, Jiaping},
   title={Positively curved shrinking Ricci solitons are compact},
   journal={J. Differential Geom.},
   volume={106},
  date={2017},
   number={3},
   pages={499--505},
  issn={0022-040X},
   review={\MR{3680555}},
}

\bib{Naber}{article}{
   author={Naber, Aaron},
   title={Noncompact shrinking four solitons with nonnegative curvature},
   journal={J. Reine Angew. Math.},
   volume={645},
   date={2010},
   pages={125--153},
   issn={0075-4102},
   review={\MR{2673425}},
}

\bib{Ni}{article}{
   author={Ni, Lei},
   author={Wallach, Nolan},
   title={On a classification of gradient shrinking solitons},
   journal={Math. Res. Lett.},
   volume={15},
   date={2008},
   number={5},
   pages={941--955},
   issn={1073-2780},
   review={\MR{2443993}},
}

\bib{Perelman2}{article}{author={Perelman, Grisha}, title={Ricci flow with surgery on three manifolds}, journal={ArXiv:math.DG/0303109}, date={2003},}

\bib{PW2}{article}{
   author={Petersen, Peter},
   author={Wylie, William},
   title={On the classification of gradient Ricci solitons},
   journal={Geom. Topol.},
   volume={14},
   date={2010},
   number={4},
   pages={2277--2300},
   issn={1465-3060},
   review={\MR{2740647}},

}

\bib{PRS}{article}{author={Pigola, Stefano}, author={Rimoldi, Michele}, author={Setti, Alberto}, title={Remarks on non-compact gradient Ricci solitons}, journal={Math. Z. } volume={268}, date={2011}, number={}, pages={777--790}, 
}

\bib{Sesum}{article}{
   author={Sesum, Natasa},
   title={Convergence of the Ricci flow toward a soliton},
   journal={Comm. Anal. Geom.},
   volume={14},
   date={2006},
   number={2},
   pages={283--343},
   issn={1019-8385},

}

\bib{Tadano}{article}{
AUTHOR = {Tadano, Homare},
     TITLE = {An upper diameter bound for compact {R}icci solitons with
              application to the {H}itchin-{T}horpe inequality. {II}},
   JOURNAL = {J. Math. Phys.},
  FJOURNAL = {Journal of Mathematical Physics},
    VOLUME = {59},
      YEAR = {2018},
    NUMBER = {4},
     PAGES = {043507, 3},
      ISSN = {0022-2488},
   MRCLASS = {53C25},
  MRNUMBER = {3787338},
}

\bib{Thorpe}{article}{
AUTHOR = {Thorpe, John A.},
     TITLE = {Some remarks on the Gauss--Bonnet integral},
   JOURNAL = {J. Math. Mech.},
    VOLUME = {18},
      YEAR = {1969},
     PAGES = {779--786},
   MRCLASS = {53.72},
  MRNUMBER = {0256307},
}

\bib{TZ1}{article}{
author={Tian, Gang}, 
author={Zhu, Xiaohua},
title={Uniqueness of K\"ahler-Ricci solitons},
journal={Acta Mathematica},
volume={184},
date={2000}, 
number={},
pages={271--305},
review={\MR{1768112}},
}

\bib{TZ2}{article}{
author={Tian, Gang}, 
author={Zhu, Xiaohua},
title={A new holomorphic invariant and uniqueness of K\"ahler-Ricci solitons},
journal={Comment. Math. Helv.},
volume={77},
date={2002}, 
number={2},
pages={297--325},
review={\MR{1915043}},
}

\bib{Tran2025}{article}{
    AUTHOR = {Tran, Hung},
     TITLE = {K\"ahler gradient {R}icci solitons with large symmetry},
   JOURNAL = {Adv. Math.},
  FJOURNAL = {Advances in Mathematics},
    VOLUME = {470},
      YEAR = {2025},
     PAGES = {Paper No. 110253, 32},
      ISSN = {0001-8708,1090-2082},
   MRCLASS = {53C55 (53C15 53C25 53D15 53E20 53E30)},
}

\bib{WW}{article}{
author={Wei, Guofang},
author={Wylie, William},
title={Comparison geometry for the smooth metric measure spaces},
journal={In ICCM. Higher Education Press},
volume={II},
number={},
date={2007},
pages={1-13},
}

\bib{Wu}{article}{
   author={Wu, Jia-Yong},
   author={Wu, Peng},
   author={Wylie, William},
   title={Gradient shrinking Ricci solitons of half harmonic Weyl curvature},
   journal={Calc. Var. Partial Differential Equations},
   volume={57},
   date={2018},
   number={5},
   pages={Paper No. 141, 15},
   issn={0944-2669},
   review={\MR{3849152}},
}

\bib{yang2000rigidity}{article}{
author={Yang, DaGang},
title={Rigidity of Einstein 4-manifolds with positive curvature},
journal={Inventiones Math.},
volume={142},
pages={435--450},
year={2000},
 }

\bib{yang2017rigidity}{article}{
    AUTHOR = {Yang, Fei},
    author={Zhang, Liangdi},
     TITLE = {Rigidity of gradient shrinking {R}icci solitons},
   JOURNAL = {Asian J. Math.},
  FJOURNAL = {Asian Journal of Mathematics},
    VOLUME = {24},
      YEAR = {2020},
    NUMBER = {4},
     PAGES = {533--547},
      ISSN = {1093-6106,1945-0036},
   MRCLASS = {53C24 (53C25)},
  MRNUMBER = {4226660},
}

\bib{zhang}{article}{
   author={Zhang, Zhu-Hong},
   title={Gradient shrinking solitons with vanishing Weyl tensor},
   journal={Pacific J. Math.},
   volume={242},
   date={2009},
   number={1},
   pages={189--200},
   issn={0030-8730},
   review={\MR{2525510}},
}

\bib{Zhang2}{article}{
   author={Zhang, Zhu-Hong}, title={A gap theorem of four-dimensional gradient shrinking solitons}, journal={Commun. Anal. Geom.}, volume={28}, date={2020}, number={3}, pages={729--742},review={ }
}

\end{biblist}
\end{bibdiv}

\end{document}